\author{Hannes Matt \and Daniel Steenebr\"ugge \and Heiko von der Mosel}
\title{Banach gradient flows for various families of knot energies}
\newcommand{\R}{\mathbb{R}}
\newcommand{\N}{\mathbb{N}}
\newcommand{\Z}{\mathbb{Z}}
\newcommand{\EL}{\mathcal{E}}
\newcommand{\FL}{\mathcal{F}}
\newcommand{\LL}{\mathcal{L}}
\newcommand{\CL}{\mathcal{C}}
\newcommand{\AL}{\mathcal{A}}
\newcommand{\WL}{\mathcal{W}}
\renewcommand{\epsilon}{\varepsilon}
\renewcommand{\rho}{\varrho}
\renewcommand{\d}{\ensuremath{\,\mathrm{d}}}
\newcommand{\dif}{\mathrm{d}}
\newcommand{\FJ}{\mathfrak{J}}
\newcommand{\CB}{\mathcal{B}}
\newcommand{\CA}{\mathcal{A}}
\newcommand{\eps}{\varepsilon}
           \newcommand{\sS}{\mathscr{S}} \newcommand{\sD}{\mathscr{D}}             \newcommand{\sM}{\mathscr{M}}
\DeclareMathOperator{\intM}{intM}
\DeclareMathOperator{\TP}{TP}
\DeclareMathOperator{\BiLip}{BiLip}
\DeclareMathOperator*{\essinf}{essinf}
\newcommand{\Eap}[1][\alpha,p]{\ensuremath{E^{#1}}}
\newcommand{\ea}[1][\alpha]{\ensuremath{e_{#1}}}
\newcommand{\eap}[1][p]{\ensuremath{\ea^{#1}}}
\newcommand{\ir}{\mathrm{ir}}
\newcommand{\ia}{\mathrm{ia}}
\DeclareMathOperator{\AC}{AC}
\newcommand{\loc}{\mathrm{loc}}
\DeclareMathOperator{\dist}{dist}
\newcommand{\intrinsicDistance}[1][\g]{d_{#1}}
\newcommand{\SobSpaceParams}{1 + \frac {\a p -1} {2p}, 2p}
\newcommand{\SobSpace}{W^{\SobSpaceParams}}
\newcommand{\SobSpaceir}{\SobSpace_{\ir}}
\newcommand{\SobSpaceDParams}{\frac {\a p -1} {2p}, 2p}
\newcommand{\SobSpaceD}{W^{\SobSpaceDParams}}
\newcommand{\g}{\gamma}
\renewcommand{\a}{\alpha}
\newcommand{\Lpg}[2][\gamma]{\norm[L^p_{#1}]{#2}}
\newcommand{\arclengthParam}{\Gamma}
\DeclarePairedDelimiter{\abs}{\lvert}{\rvert}
\newcommand{\norm}{%
    \@ifstar
    \normStar%
    \normNoStar%
}
\newcommand{\set}{%
    \@ifstar
    \setStar%
    \setNoStar%
}
\newcommand{\normNoStar}[2][]{\lVert #2 \rVert_{#1}}
\newcommand{\normStar}[2][]{\left\lVert #2 \right\rVert_{#1}}
\newcommand{\Norm}[1]{\left\lVert#1\right\rVert}
\newcommand{\seminorm}[2][]{\left[#2\right]_{#1}}
\newcommand{\setStar}[2][]{
    \left\{#2%
    \ifthenelse{\equal{#1}{}}{}{\ \middle\vert \ #1}%
    \right\}%
}
\newcommand{\setNoStar}[2][]{
    \{#2%
    \ifthenelse{\equal{#1}{}}{}{\mathrel{\vert}#1}
    \}
}
\DeclarePairedDelimiter{\paren}{(}{)}
\newcommand{\newterm}[1]{\emph{#1}}
\newcommand{\yrightarrow}[1]{\xrightarrow[#1]{}}
\newcommand{\numberthis}{\refstepcounter{equation}\tag{\theequation}}
\newlength{\adjustedalignmentskip}
\DeclareRobustCommand{\adjustedalignment}[3][1]{
    \setlength{\adjustedalignmentskip}{\widthof{${}\displaystyle#2{}$}- \widthof{${}\displaystyle#3{}$}}%
    \setlength{\adjustedalignmentskip}{#1\adjustedalignmentskip}%
    \hspace{\adjustedalignmentskip}#3%
}
\newcommand{\stackrelx}[2]{\adjustedalignment[0.5]{#2}{\stackrel{#1}{#2}}} 
\renewcommand{\[}{\begin{equation*}}
\renewcommand{\]}{\end{equation*}}
\newcommand*{\fres}[2]{ {\left.\kern-\nulldelimiterspace #1 \vphantom{\big|} \right|_{\kern-1pt #2} }}
\theoremstyle{plain}
\newtheorem{lemma}{Lemma}[section]
\newtheorem{theorem}[lemma]{Theorem}
\newtheorem{proposition}[lemma]{Proposition}
\newtheorem{corollary}[lemma]{Corollary}
\newtheorem*{assM}{Assumption (M) on the metric space}
\theoremstyle{definition}
\newtheorem{remark}[lemma]{Remark}
\newtheorem{claim}{Claim}
\numberwithin{equation}{section}
\newcommand{\Fo}{\,\,\,\text{for }\,\,}
\newcommand{\Foa}{\,\,\,\text{for all }\,\,}
\begin{document}
\maketitle

\begin{abstract}
We establish long-time existence of Banach gradient flows 
for generalised integral Menger curvatures and tangent-point energies, and 
for
O'Hara's self-repulsive potentials $E^{\alpha,p}$.
In order to do so, we employ the theory of curves of maximal slope in slightly smaller 
spaces compactly embedding into the respective energy spaces 
associated to these functionals, and add a term involving the logarithmic strain,
which controls the parametrisations of the flowing (knotted) loops.
As a prerequisite, we prove in addition that O'Hara's knot energies $E^{\alpha,p}$
are continuously differentiable.
\end{abstract}

\section{Introduction}\label{sec:1}
It is an interesting and analytically challenging problem in geometric
knot theory to evolve knots according to the gradient flow of a 
self-repelling interaction energy. Such energies are called
\emph{knot energies}, and one may categorise them into two types.

Firstly, there are singular \emph{self-repulsive potentials} such as
J. O'Hara's \cite{OHara:1992:Familyofenergyfunctionalsofknots}
two-parameter energy family $\Eap$ defined on closed regular curves
$\g:\R/\Z\to\R^n$ by
\begin{equation}\label{eq:ohara}
\textstyle
 \Eap(\g) := \iint_{(\R/\Z)^2} \paren[\big]{\frac1{|\g(x)-\g(y)|^\alpha}-
 \frac1{d_\g^\alpha(x,y)}}^p  \abs{\g'(x)} \abs{\g'(y)} \d x \d y 
 \end{equation}
for $\alpha>0$ and $p>1$ satisfying $2\le \alpha p < 2p+1$. Here, 
$d_\g(x,y)$ denotes the intrinsic distance between the points
$\g(x)$ and $\g(y)$ along the curve.
Zh.-X. He \cite{he_2000} had shown short-time existence for the $L^2$-gradient
flow of the \emph{M\"obius}\footnote{This name reflects the invariance
of $E^{2,1}$ under M\"obius transformations; see the seminal work on 
the M\"obius energy by
 Freedman, He, and Wang in \cite{freedman-etal_1994}.} energy $E^{2,1}$
for smooth initial data, before S. Blatt investigated this flow 
systematically for the subfamily $E^{\alpha,1}$. He established long-time existence results and convergence to a critical point:
for $\alpha\in (2,3)$
in \cite{blatt_2018}, and for $\alpha=2$ and initial data sufficiently
close to a local minimiser in \cite{blatt_2012b}. Moreover, again
for $E^{2,1}$ he proved an $\eps$-regularity result together with a blow-up analysis
in \cite{blatt_2020a},  resulting in the 
convergence to the round circle 
if one restricts the $L^2$-flow to planar loops. According
to \cite[p. 31]{BlattReiter:2013:StationarypointsofOHarasknotenergies}
such strong results on the $L^2$-flow for
the general energy family $E^{\alpha,p}$ may be out of reach
because of a degenerate elliptic operator in the first variation
formula for $E^{\alpha,p}$.
As shown in \cite{blatt_2012a} the 
underlying energy space of the M\"obius energy $E^{2,1}$ is
a fractional Sobolev space, the     
Sobolev-Slobodecki\v{\i}\footnote{For the definition and some facts 
concerning these spaces, see Appendix \ref{sec:appendix}; for a condensed 
selection of pertinent results on periodic
Sobolev-Slobodecki\v{\i} spaces see \cite[Appendix A]{KnappmannSchumacherSteenebruggeEtAl:2021:AspeedpreservingHilbertgradientflowforgeneralizedintegralMengercurvature}.}
space $W^{\frac32,2}(\R/\Z,\R^n)$, which is a Hilbert space. 
This fact was recently used by Ph. Reiter and H. Schumacher  
to establish short-time existence for a Hilbert gradient flow in
that space 
for $E^{2,1}$ with a method, however, that seems to be restricted
to $E^{2,1}$; see
\cite[Remark after Theorem 1.2]{ReiterSchumacher:2021:SobolevGradientsfortheMobiusEnergy}.

The second type of knot energies are \emph{geometric curvature energies}
whose integrands are constructed from circles (as first suggested by
O. Gonzalez and J.H. Maddocks in \cite{gonzalez-maddocks_1999}), such
as \emph{integral Menger curvatures}, analysed in \cite{strzelecki-etal_2010,strzelecki-etal_2013a},
\begin{equation}\label{eq:menger}
\textstyle
\mathscr{M}_p(\g):=\iiint_{(\R/\Z)^3}\frac1{R^p(\g(x),\g(y),\g(z))}
\abs{\g'(x)} \abs{\g'(y)} \abs{\g'(z)}\d x \d y \d z\Fo p>3,
\end{equation}
where $R(a,b,c)$ denotes the circumcircle of the three points $a,b,c
\in\R^n$, or the \emph{tangent-point energies} \cite{strzelecki-vdm_2012}
\begin{equation}\label{eq:tan-point}
\textstyle
\TP_q(\g):=\iint_{(\R/\Z)^2}\frac1{r^q_\textnormal{tp}[\g](\g(x),\g(y))}
\abs{\g'(x)} \abs{\g'(y)} \d x \d y \Fo q>2,
\end{equation}
where $r_\textnormal{tp}[\g](\g(x),\g(y))$ is the radius of the unique
circle through $\g(x)$ and $\g(y)$ that is tangent to the curve
$\g$ at the point $\g(x)$.

Replacing the $p$-th power of the circumcircle radius  
in \eqref{eq:menger}
by the more general (but less geometric) expression
$
R^{(p,q)}(a,b,c):= \frac{(|b-c||b-a||c-a|)^p}{|(b-a)\wedge (c-a)|^q}$,
or similarly, replacing the $q$-th power of the tangent-point radius in \eqref{eq:tan-point}
by
\[
\textstyle
r^{(p,q)}[\g](\g(x),\g(y)):= \frac{|\g(x)-\g(y)|^p}{\dist^q\big(\g(x)+
\R\g'(x),\g(y)\big)}
\]
one obtains \emph{generalised integral Menger curvatures}
\begin{equation}\label{eq:gen-menger}
\textstyle
\intM^{(p,q)}(\g):=\iiint\limits_{(\R/\Z)^3}\frac{\abs{\g'(x)} \abs{\g'(y)} 
\abs{\g'(z)}}{R^{(p,q)}(\g(x),\g(y),\g(z))}
\d x \d y \d z\Fo q\in (1,\infty),\,p\in (\frac23 q +1,q+\frac23  ),
\end{equation}
and \emph{generalised tangent-point energies}
\begin{equation}\label{eq:gen-tan-point}
\textstyle
\TP^{(p,q)}:=\iint_{(\R/\Z)^2}\frac{\abs{\g'(x)} \abs{\g'(y)}}{r^{(p,q)}[\g](\g(x),\g(y))}\d x \d y \Fo q\in (1,\infty),\,p\in (q+2,2q+1).
\end{equation}
These two-parameter energy families contain the original geometric curvature
energies \eqref{eq:menger} and \eqref{eq:tan-point} up to a
constant factor, i.e., $\sM_p =2^p\intM^{(p,p)}$ and 
$\TP_q=2^q\TP^{(2q,q)} $, and they were introduced by Blatt and Reiter 
to prove smoothness of critical points specifically
of $\intM^{(p,2)}$ for $p\in (\frac73,\frac83)$ 
\cite[Theorem 4]{BlattReiter:2015:TowardsaregularitytheoryforintegralMengercurvature},
and of $\TP^{(p,2)}$ for $p\in (4,5)$
\cite[Theorem 1.5]{BlattReiter:2015:Regularitytheoryfortangentpointenergiesthenondegeneratesubcriticalcase}. 
 There is no existence result on the $L^2$-flow for these energies yet, but the underlying
Hilbert space structure in this specific range of
parameters was recently used in \cite{KnappmannSchumacherSteenebruggeEtAl:2021:AspeedpreservingHilbertgradientflowforgeneralizedintegralMengercurvature} 
 to prove long-time existence for a suitably projected Hilbert gradient flow for
 $\intM^{(p,2)}$ for $p\in (\frac73,\frac83)$, where the projection was chosen
 to conserve the speed of the curves' parametrisations along the flow. 
 A corresponding long-time existence result for $\TP^{(p,2)}$ for
 $p\in (4,5)$ is also available \cite{steenebruegge_2022}.
 Considering gradient flows as ordinary differential equations in the respective
 energy space has the additional potential of developing numerical procedures 
 that are substantially more efficient and robust in comparison to the 
 existing numerical methods for the $L^2$-flow; see
 \cite{ReiterSchumacher:2021:SobolevGradientsfortheMobiusEnergy}
and 
 \cite[Sections 1.4 \& 7]{KnappmannSchumacherSteenebruggeEtAl:2021:AspeedpreservingHilbertgradientflowforgeneralizedintegralMengercurvature}.
 
 It is the purpose of the present paper to prove long-time existence results
 for the gradient flows of the three two-parameter energy families
 $E^{\alpha,p}$, $\intM^{(p,q)}$, and $\TP^{(p,q)}$ for the respective complete
 range of parameters given in \eqref{eq:ohara}, \eqref{eq:gen-menger}, 
 and \eqref{eq:gen-tan-point}. Since the underlying
 energy spaces are Sobolev-Slobodecki\v{\i} spaces
 that are in general only Banach spaces, we employ the general metric gradient approach
 of Ambrosio et al. \cite{AGS08} to construct curves of maximal slope, which turn
 out to be solutions of the doubly-nonlinear
  gradient flow equation.
 In order to do
 this, we need to control the curves' parametrisations along the
 flow, and for that we use the same constraint as in 
 \cite{KnappmannSchumacherSteenebruggeEtAl:2021:AspeedpreservingHilbertgradientflowforgeneralizedintegralMengercurvature}  
 but in a different manner. We add to the knot energy
 a suitable norm of the \emph{logarithmic
 strain}
 \begin{equation}\label{eq:log-strain}
 \Sigma(\g):=\log |\g'|
 \end{equation}
 as a lower order penalty term, instead of projecting 
 onto the null space of
 its differential $D\Sigma[\g]$ as in 
 \cite{KnappmannSchumacherSteenebruggeEtAl:2021:AspeedpreservingHilbertgradientflowforgeneralizedintegralMengercurvature}.  
 As a second ingredient we restrict 
 the total energy  to 
 a reflexive and uniformly convex Banach space $\CL $ compactly 
 embedded in the respective  energy space $\CB$ of closed curves,
 to account for the quite restrictive assumptions in the general 
 existence theory
 for curves of maximal slope. 
 
 To summarise these ideas, given any knot energy 
 $\EL :\CB\to (-\infty,\infty]$, 
 and some number $\kappa > 1$, we consider the  
 \emph{total energy} $\phi:\CL\to (-\infty,\infty]$
defined as  
 \begin{equation}\label{eq:total-energy}
 \textstyle
 \phi(\g):=\begin{cases}
 \EL(\g)+\|\Sigma(\g)\|_{\AL}^\kappa, & \textnormal{if $\g\in\CL$ is regular and injective}\\
 +\infty, & \textnormal{else,}
 \end{cases}
 \end{equation}
 where the Banach space $\AL$ consists of real-valued
 functions with exactly one order lower 
 in differentiability than the curves $\g
 \in\CB$, 
 since the scalar-valued
 logarithmic strain consumes one derivative.

 By means of the \emph{$\theta$-duality mapping $\FJ_{\CL,\theta}:\CL\to 2^{\CL^*}$ with $\theta\in (1,\infty)$} defined by
 \begin{equation}\label{eq:dual}
 \textstyle
 \FJ_{\CL,\theta}(x):=\{\xi\in\CL^*\colon\langle \xi,x\rangle_{\CL^*\times\CL}=\|x\|_\CL\cdot \|\xi\|_{\CL^*},\, \|x\|_\CL^\theta=\|\xi\|_{\CL^*}^{\beta}\},\,
 \tfrac1\theta+\tfrac1{\beta}=1,
 \end{equation}
where $\CL^*$ denotes the dual space of $\CL$,
 we state our main result.

 \begin{theorem}[Long-time existence]\label{thm:1.1}
 For any $\kappa,\theta\in (1,\infty)$, $\eps\in (0,\infty)$, and any given regular and
 injective curve $\g_0\in\CL_\eps$, there exists a mapping $
 {\bm u}\in C^1([0,\infty),\CL_\eps)$ with ${\bm u}(0)=\g_0$, such 
 that the closed curves $\bm u(t)$ are injective and 
 regular for all $t\ge 0$, satisfying
 \begin{equation}\label{eq:gradflow}
 \textstyle
 \frac{\dif}{\dif t}{\bm u}(t)=
 -\FJ_{\CL_\eps,\theta}^{-1}\big(D\phi[{\bm u}(t)]\big)\quad\Foa t\ge 0,
 \end{equation}
 for {\bf any one of the following choices}:
 \begin{enumerate}
 \item[\rm (i)]
 $\EL:=E^{\alpha,p}$ for $\alpha\in (0,\infty)$, $p\in [1,\infty)$ 
 satisfying
 $2< \alpha p < 2p +1$, and
\\$
 \AL:=W^{\frac{\alpha p-1}{2p},2p}(\R/\Z),\,
\CB:=W^{1+\frac{\alpha p-1}{2p},2p}(\R/\Z,\R^n),\,
 \CL_\eps:=W^{1+\frac{\alpha p-1}{2p}+\eps,2p}(\R/\Z,\R^n);$
 \item[\rm (ii)]
 $\EL:=\intM^{(p,q)}$ for $q\in (1,\infty), $ $ p\in (\frac23 q+1,q+\frac23),$ and
\\$
 \AL:=W^{\frac{3p-2}q-2,q}(\R/\Z),\,
 \CB:=W^{\frac{3p-2}q-1,q}(\R/\Z,\R^n),\,
 \CL_\eps:=W^{\frac{3p-2}q-1+\eps,q}(\R/\Z,\R^n);$
  \item[\rm (iii)]
  $\EL:=\TP^{(p,q)}$ for $q\in (1,\infty)$, $p\in (q+2,2q+1),$ and
\\$
 \AL:=W^{\frac{p-1}q-1,q}(\R/\Z),\,
 \CB:=W^{\frac{p-1}q,q}(\R/\Z,\R^n),\,
 \CL_\eps:=W^{\frac{p-1}q+\eps,q}(\R/\Z,\R^n).$
 \end{enumerate}
  \end{theorem}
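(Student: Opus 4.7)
The plan is to deduce Theorem \ref{thm:1.1} from the general theory of curves of maximal slope in the reflexive, uniformly convex Banach space $\CL_\eps$ developed by Ambrosio, Gigli and Savar\'e \cite{AGS08}. The overall architecture is identical in the three cases (i)--(iii); only the verification of lower semicontinuity, coercivity and differentiability of the total energy $\phi$ from \eqref{eq:total-energy} varies, and would be settled case by case by exploiting the respective Sobolev-Slobodecki\v{\i} structure. As auxiliary topology $\topology$ I would take the weak topology of $\CL_\eps$; the compact embedding $\CL_\eps\hookrightarrow\CB$ then ensures that $\topology$-convergence implies norm convergence in $\CB$, which is the ambient space on which $\EL$ is known to behave well.

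The first major step is to verify coercivity and $\topology$-sequential lower semicontinuity of $\phi$. For a sequence $(\g_k)$ with $\phi(\g_k)\le c$, the logarithmic-strain penalty controls $\|\Sigma(\g_k)\|_\AL$ and hence, after fixing translations, bounds $(\g_k)$ uniformly in $\CL_\eps$, so that Banach-Alaoglu yields a weakly convergent subsequence. Continuity of $\EL$ and of $\g\mapsto\|\Sigma(\g)\|_\AL^\kappa$ on $\CB$, combined with the compact embedding, then yields lower semicontinuity in $\topology$, while the blow-up of $\EL$ at self-intersections forces the $+\infty$-branch of \eqref{eq:total-energy} to be respected by any weak limit. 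Next I would establish Fr\'echet differentiability of $\phi$ on its effective domain. For $\intM^{(p,q)}$ and $\TP^{(p,q)}$ this rests on the derivatives already computed in \cite{BlattReiter:2015:TowardsaregularitytheoryforintegralMengercurvature, BlattReiter:2015:Regularitytheoryfortangentpointenergiesthenondegeneratesubcriticalcase}, while for $E^{\alpha,p}$ the required $C^1$-property is the separate prerequisite highlighted in the abstract, which I would obtain by differentiating under the double integral in \eqref{eq:ohara} after a bi-Lipschitz reparametrisation that trivialises the diagonal singularity, dominating difference quotients by the appropriate Sobolev-Slobodecki\v{\i} seminorm. A chain rule on reflexive Banach spaces then identifies the local slope $\locSlope(\g)$ with $\|D\phi[\g]\|_{\CL_\eps^*}$, in particular showing that it is a strong upper gradient in the sense of \cite{AGS08}.

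With these ingredients the AGS machinery produces an absolutely continuous curve $\bm u\colon[0,\infty)\to\CL_\eps$ of maximal slope starting at $\g_0$, with metric derivative in $L^\beta_\loc$ for $\beta$ conjugate to $\theta$ in \eqref{eq:dual}. Uniform convexity of $\CL_\eps$ and $\CL_\eps^*$, combined with the slope identification above and the defining equalities in \eqref{eq:dual}, turn the maximal-slope inequality into the pointwise identity \eqref{eq:gradflow}; continuity of the inverse $\theta$-duality map on the uniformly convex dual then upgrades $\bm u$ from absolute continuity to $C^1$. The main obstacle I anticipate is the \emph{preservation of regularity and injectivity} along the flow: the discrete minimising-movement scheme could a priori escape into non-injective or degenerate limits on which $\EL=+\infty$, but precisely the two-sided control of $|\g'|$ provided by the $\|\Sigma(\cdot)\|_\AL^\kappa$-penalty, together with the a priori bound $\phi(\bm u(t))\le\phi(\g_0)<\infty$ and the blow-up of $\EL$ at self-intersections, is what excludes both obstructions, and is ultimately what motivates the choice of the smaller space $\CL_\eps$ and of the logarithmic-strain penalty.
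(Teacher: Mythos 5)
Your overall architecture coincides with the paper's (AGS curves of maximal slope in the smaller reflexive, uniformly convex space $\CL_\eps$, identification of the local slope with $\|D\phi\|_{\CL_\eps^*}$, and the duality-mapping argument turning the differential inclusion into \eqref{eq:gradflow} with a subsequent $C^1$-upgrade), but there are two genuine gaps. The first is your compactness step: you claim that a bound $\phi(\g_k)\le c$ controls $\|\Sigma(\g_k)\|_{\AL}$ and ``hence, after fixing translations, bounds $(\g_k)$ uniformly in $\CL_\eps$, so that Banach--Alaoglu yields a weakly convergent subsequence.'' This is false. The logarithmic strain lives in $\AL$, one full order of differentiability \emph{below} $\CB$, and together with $\EL(\g_k)\le c$ it yields only two-sided bounds on $|\g_k'|$ and a bi-Lipschitz bound; it cannot control the top-order seminorm of $\g_k$ in $\CL_\eps=W^{1+s+\eps,\rho}$, nor even in $\CB$ (for $\CB$ one would at best control the arc-length reparametrisation via Blatt's classification). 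Sublevel sets of $\phi$ are therefore not bounded in $\CL_\eps$. What the AGS scheme actually requires -- and what the paper verifies as Assumption ($\Phi$3) -- is relative weak sequential compactness of \emph{$d$-bounded} subsets of sublevel sets; the metric bound is supplied by the penalisation term $\tfrac{1}{\theta\tau^{\theta-1}}d(\cdot,u)^\theta$ in the recursive minimisation, not by the energy. As written, your minimisation step has no reason to admit a minimiser.

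The second gap concerns the $C^1$-regularity of $E^{\alpha,p}$, which you propose to obtain by ``differentiating under the double integral after a bi-Lipschitz reparametrisation that trivialises the diagonal singularity.'' The actual obstruction is not the diagonal singularity but the intrinsic distance $d_\g(x,y)$ in the second term of the integrand: the map $\g\mapsto d_\g(x,y)$ fails to be differentiable at pairs $(x,y)$ where the minimising arc switches from one side of the curve to the other, and no reparametrisation removes this. The paper's proof (Theorem \ref{theorem:C1OHara}) has to split $(\R/\Z)^2$ into a good set $U(\eta)$, where the minimising arc is stable under the perturbation and a Taylor expansion with a uniform $L^p$-bound on the second variation applies, and a bad set $V(\eta)$, on which one instead proves a uniform Lipschitz estimate for the integrand together with the measure bound $|V(\eta)|\lesssim\|\eta'\|_{L^\infty}$; only the combination yields the $o(\|\eta\|)$ remainder. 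Your sketch does not engage with this, and since the differentiability of $E^{\alpha,p}$ for $p>1$ is precisely the new analytic ingredient the paper supplies (Section \ref{sec:5}), this step cannot be waved through. Your remaining verifications (lower semicontinuity via the compact embedding and continuity of $\EL$ on $\CB_{\ir}$, injectivity/regularity preservation via the two-sided speed control and the self-repulsion of $\EL$) are sound in spirit and close to the paper's Lemmata \ref{lemma:WeakSubcontinuity} and \ref{lemma:assumption2Arclength}, although the paper replaces ``blow-up at self-intersections'' by the quantitative bi-Lipschitz estimate (E3), which is what actually passes to weak limits.
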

  Since \eqref{eq:gradflow} is a gradient flow
  the energy  decreases along the flow.
  In addition,  the Banach space
  $\CL$ continuously embeds into $C^1(\R/\Z,\R^n)$ in all three 
  alternatives. Therefore,
  the knot type $[\g_0]$ of  the initial loop $\g_0$ is preserved along the flow,
  since knot classes are stable with respect to $C^1$-deformations \cite{reiter_2005,blatt_2009a}.
  \begin{corollary}\label{cor:1.2}
 Any $\bm u \in \AC_\textnormal{loc}([0, \infty), \CL_\eps)$ starting at an injective and regular curve $\g_0\in\CL_\eps$ and solving \eqref{eq:gradflow} for almost 
  all $t > 0$ is a $\theta$-curve of maximal slope with respect to the strong upper gradient\footnote{See Section \ref{sec:2} where we briefly
review
the essentials for metric gradient flows.} $\norm[\CL_\eps^*]{D\phi[\cdot]}$. 
  In particular, the
  total energy $\phi$ 
  decreases along $\bm u$, i.e., 
  $\phi(\bm u(t)) \le
  \phi(\bm u(s))$ for all $t>s\ge 0$.
  Additionally, $u \in C^1([0,\infty), \CL_\eps)$ and the knot type is preserved along the flow, that is,
  $[\bm u(t)]=[\g_0]$ for all $t\ge 0.$
  \end{corollary}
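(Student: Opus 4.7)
The plan is to derive the energy--dissipation equality defining a $\theta$-curve of maximal slope directly from the Banach gradient flow equation \eqref{eq:gradflow}, exploiting the equality case of Young's inequality that is hard-wired into the duality map. The key observation is that if $v=\FJ_{\CL_\eps,\theta}^{-1}(\xi)$ for some $\xi\in\CL_\eps^*$, then \eqref{eq:dual} supplies both the duality pairing identity $\langle\xi,v\rangle_{\CL_\eps^*\times\CL_\eps}=\|v\|_{\CL_\eps}\|\xi\|_{\CL_\eps^*}$ and the norm relation $\|v\|_{\CL_\eps}^\theta=\|\xi\|_{\CL_\eps^*}^\beta$, the latter being precisely the equality condition of Young's inequality $ab\le\tfrac{a^\theta}{\theta}+\tfrac{b^\beta}{\beta}$ applied with $a=\|v\|_{\CL_\eps}$ and $b=\|\xi\|_{\CL_\eps^*}$.

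Setting $\xi(t):=D\phi[\bm u(t)]$ and reading \eqref{eq:gradflow} as $-\bm u'(t)=\FJ_{\CL_\eps,\theta}^{-1}(\xi(t))$ for a.e.\ $t>0$, the chain rule (valid because $\phi$ is continuously Fréchet differentiable on the open set of regular, injective curves in $\CL_\eps$ and $\bm u\in\AC_\loc$) yields
\[
\tfrac{\dif}{\dif t}\phi(\bm u(t))=\langle\xi(t),\bm u'(t)\rangle_{\CL_\eps^*\times\CL_\eps}=-\|\xi(t)\|_{\CL_\eps^*}\|\bm u'(t)\|_{\CL_\eps}=-\tfrac{1}{\theta}\|\bm u'(t)\|_{\CL_\eps}^\theta-\tfrac{1}{\beta}\|\xi(t)\|_{\CL_\eps^*}^\beta
\]
for a.e.\ $t>0$. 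Integration from $s$ to $t$ reproduces the energy--dissipation equality characterising a $\theta$-curve of maximal slope with strong upper gradient $\norm[\CL_\eps^*]{D\phi[\cdot]}$ as reviewed in Section~\ref{sec:2}. Since the right-hand side is non-positive, $t\mapsto\phi(\bm u(t))$ is non-increasing, giving $\phi(\bm u(t))\le\phi(\g_0)<\infty$; the very definition \eqref{eq:total-energy} then forces each $\bm u(t)$ to remain regular and injective.

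For the $C^1$-regularity, I combine three ingredients: continuity of $t\mapsto\bm u(t)\in\CL_\eps$ from $\AC_\loc$, continuity of the Fréchet derivative $\g\mapsto D\phi[\g]$, and the norm-to-norm continuity of the single-valued inverse duality map $\FJ_{\CL_\eps,\theta}^{-1}\colon\CL_\eps^*\to\CL_\eps$ granted by reflexivity and uniform convexity of $\CL_\eps$. Their composition shows that $t\mapsto\bm u'(t)=-\FJ_{\CL_\eps,\theta}^{-1}(D\phi[\bm u(t)])$ is continuous, so $\bm u\in C^1([0,\infty),\CL_\eps)$. Finally, in each of the three alternatives the fractional Sobolev exponent of $\CL_\eps$ strictly exceeds the threshold $1+\tfrac{1}{2p}$ or $1+\tfrac{1}{q}$ by the margin $\eps$, so a Morrey-type embedding provides $\CL_\eps\hookrightarrow C^1(\R/\Z,\R^n)$. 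The continuous trajectory $t\mapsto\bm u(t)$ in $C^1(\R/\Z,\R^n)$ combined with stability of knot classes under $C^1$-deformations \cite{reiter_2005,blatt_2009a} forces $t\mapsto[\bm u(t)]$ to be locally constant, hence constant on $[0,\infty)$.

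The principal delicate point is the chain rule for $\phi\circ\bm u$ under the mere $\AC_\loc$ hypothesis; it hinges on the continuous Fréchet differentiability of $\phi$ on the open subset of regular, injective curves inside $\CL_\eps$, a property established separately for each of the three energy families in this paper -- notably the $C^1$-statement for $E^{\alpha,p}$ announced in the abstract.
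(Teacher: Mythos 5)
Your proposal is correct, and it reaches the energy--dissipation equality by a route that is genuinely more direct than the paper's. The paper first computes $\frac{\dif}{\dif t}\phi(\bm u(t))=-\|D\phi[\bm u(t)]\|_{\CL_\eps^*}^\beta\le 0$ by identifying $\FJ_{\CL_\eps,\theta}^{-1}$ with the duality map $\FJ_{\CL_\eps^*,\beta}$ on the dual (via \cite[Corollary~II.3.5]{Cioranescu:1990:GeometryofBanachSpacesDualityMappingsandNonlinearProblems}), and then outsources the upgrade from ``$\phi$ non-increasing along a solution of the differential inclusion'' to ``curve of maximal slope'' to \cite[Proposition~1.4.1]{AGS08}. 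You instead read off both relations $\langle\xi,-\bm u'\rangle=\|\bm u'\|_{\CL_\eps}\|\xi\|_{\CL_\eps^*}$ and $\|\bm u'\|_{\CL_\eps}^\theta=\|\xi\|_{\CL_\eps^*}^\beta$ from the definition \eqref{eq:dual} and invoke the equality case of Young's inequality, which yields \eqref{eq:energyDissipation} in one stroke without the external proposition. What the paper's detour buys is the explicit formula $\frac{\dif}{\dif t}\phi=-\|D\phi\|^\beta$ and a clean reduction to a cited result; what yours buys is self-containedness. Your treatment of the knot-type preservation and of the monotonicity is the same as the paper's.

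Two small points you should tighten. First, your $C^1$ argument only shows that $\bm u'$ agrees \emph{almost everywhere} with the continuous map $\bm v(t)=-\FJ_{\CL_\eps,\theta}^{-1}(D\phi[\bm u(t)])$; to conclude $\bm u\in C^1$ you still need the fundamental theorem of calculus $\frac1h(\bm u(t+h)-\bm u(t))=\frac1h\int_t^{t+h}\bm v(r)\,\dif r\to\bm v(t)$, which is available because the reflexive space $\CL_\eps$ has the Radon--Nikod\'ym property --- this is exactly the step the paper spells out via \cite[Corollary~1.2.7, Definition~1.2.5, Proposition~1.2.3]{Arendt.2011}. Second, the statement asserts maximal slope with respect to the \emph{strong} upper gradient $\|D\phi[\cdot]\|_{\CL_\eps^*}$; that this quantity (the local slope, by \thref{Proposition_Metric_Quantities_in_Banach}) is indeed a strong upper gradient is not automatic from your computation but follows from the local Lipschitz argument in Step~4 of the proof of \thref{theorem:existenceCOMS}, whose hypotheses are verified in the proof of \thref{thm:1.1}; a one-line reference there closes this.
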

  
\begin{remark}\label{rem:after-main-thm}
1.\, 
Notice that in general the duality mapping $\FJ_{\CL,\theta}$
defined in \eqref{eq:dual} is set-valued, so that one would expect
to at most solve
the differential inclusion $-D\phi[{\bm u}(\cdot)]
\in\FJ_{\CL,\theta}({\bm u}'(\cdot))$ on $[0,\infty)$ instead of 
the gradient flow equation
\eqref{eq:gradflow}. The specific properties of  the 
Banach space $\CL:=\CL_\eps$ in \thref{thm:1.1}, however, imply that $\FJ_{
\CL_\eps,\theta}$
is not only single-valued, but also a homeomorphism between 
$\CL_\eps$ and $\CL_\eps^*$.

2.\,
The choice of the Banach space $\CB$
in the alternatives
(i), (ii), and (iii) of \thref{thm:1.1} is maximal in the
following sense:  A regular injective curve $\g\in\CB$ has finite
energy $\EL$, and, on the other hand, if a regular injective 
$C^1$-curve $\g$ satisfies
$\EL(\g)<\infty$ then its suitably rescaled
arc length parametrisation is contained
in $\CB$;
see 
\cite[Theorem~1.1]{Blatt:2012:BoundednessandRegularizingEffectsofOHarasKnotEnergies}\footnote{Unfortunately, 
there is a typographical error in said theorem, 
for the correct constant cf.\ its proof or later papers by 
the same author, e.g. 
\cite{BlattReiter:2013:StationarypointsofOHarasknotenergies}.},
\cite[Theorem 1]{BlattReiter:2015:TowardsaregularitytheoryforintegralMengercurvature}, and
\cite[Theorem 1.1 \& Remark 1.2]{BlattReiter:2015:Regularitytheoryfortangentpointenergiesthenondegeneratesubcriticalcase}. 

3.\,
There is complete freedom in the choice of the parameters
$\kappa,\theta\in (1,\infty)$, and $\eps>0$ in \thref{thm:1.1}. 
For $\kappa=1$ 
we still obtain  a curve of maximal slope for the total 
energy $\phi$ with respect to a strong
upper gradient
which, however,  
does not coincide with $\|D\phi\|_{\CL^*}$ any more;
see \thref{prop:kappa1}.
But it is presently unclear if that curve of maximal slope
also solves a differential inclusion.

The limiting process,
$\epsilon\searrow 0$, on the other hand, approximating the correct energy space $\CB$ for the 
respective knot energy $\EL$ in cases (i)--(iii) of \thref{thm:1.1}, 
yields a subsequence of solutions of \eqref{eq:gradflow}
that converges pointwise weakly to a limit mapping $\bm{u}^{\ast} \in AC^{\theta}\left( [0,\infty),\CB \right)$, provided that the initial curves $\gamma_{0,\epsilon}$ are well-prepared; see \thref{cor:eps_to_0} and the more general result \thref{prop:eps_to_0} for metric spaces, which is similar in spirit as \cite[Theorem 2]{Serfaty.2011}, where a more general limiting process is investigated. There, however, the existence of the limiting curve is assumed, see also \cite[Remark 1]{Serfaty.2011}.
While we know that the energy does not exceed its initial value $\phi(\bm{u}^{\ast}(0))$, it is unclear whether  $\bm{u}^{\ast}$ is a curve of maximal slope 
in the limiting energy space $\CB$. It would be if we had  a weakly lower
semicontinuous strong upper gradient for $\phi$ as shown in \thref{lemma:Missing_Ingredient},
a condition that is also used in \cite[Theorem 2.5]{Braides.2016}
 and in \cite[Theorem 2]{Serfaty.2011}.

In addition, multiplying 
the term $\|\Sigma(\g)\|_\CB^\kappa$ in \eqref{eq:total-energy} with a 
 small prefactor $\vartheta>0$ and sending $\vartheta$ to zero 
 gives rise 
 to yet another interesting limiting process, 
 similarly as in \cite{gerlach-etal_2017}, where such a procedure
 was used to study elastic knots.

 4.\,
 We do not know at this point if the solution $\bm{u}(\cdot)$ 
 of \eqref{eq:gradflow} is unique. Moreover, it is open whether
 $\bm{u}(t)$
 subconverges or even converges
 to a critical point of the total energy $\phi$ as  $t\to\infty$,
 not to speak of any information about convergence rates. 
 These convergence issues 
would possibly
 require to study the  second variation of the total energy
 $\phi$ and a suitable
 {\L}ojasiewicz-Simon inequality as carried out for the $L^2$-flow
 of the M\"obius energy $E^{2,1}$ 
 in \cite[Sections 4 \& 5]{blatt_2012b}; 
 see also the initial
 analysis of the kernel of the second variation
 of  $E^{2,1}$ in 
  \cite[Section 3]{biesenbach_2021}.

5.\,
Let us finally mention why we preferred the metric gradient approach 
to the study of ordinary differential equations in Banach spaces. 
First of all, the Picard-Lindel\"of theory requires Lipschitz
continuity of the right-hand side of the equation. This seems out of
reach in the present context, where the duality mapping $\FJ_{\CL,\theta}$ fails to be Lipschitz unless the underlying Banach space
is Lindenstrau{\ss} convex \cite{zemek_1991}. Indeed, here we deal
with Sobolev-Slobodecki\v{\i} spaces, and the Lindenstrau{\ss}
convexity requires an integrability that is at most quadratic; see
\cite[Theorem 7]{bynum_1976} in combination with 
\cite[Proposition 3.6]{cheng-ross_2015}. However, more general 
existence results with a compact operator on the right-hand side
such as \cite[Ch. VI, Thm. 3.1]{martin_1976} could probably be
used to obtain at least short-time existence. To extend this
to long-time existence would then require further estimates which
do not seem to provide a short cut. An interesting alternative
approach could be a vanishing viscosity method as performed  recently
for the $p$-curvature integral 
by Blatt and N. Vorderobermeier, and this might lead to stronger results; see \cite{blatt-vorderobermeier_2021b} in comparison to
\cite{blatt-vorderobermeier_2021a}.
\end{remark}

The paper is structured as follows. In Section 
\ref{sec:2} we  recall the basic  notions of the metric gradient
flow approach following \cite{AGS08} but slightly adapted to
our context. There, we also also revisit how these notions manifest in Banach spaces. 
 In Section
\ref{sec:3} we prove an abstract existence theorem for curves of maximal
slope for the total energy $\phi$ in \eqref{eq:total-energy}
under certain assumptions
on an otherwise arbitrary knot energy $\EL$; see
\thref{theorem:existenceCOMS}. We
also treat in \thref{prop:kappa1}
the limiting case $\kappa=1$.
In Section \ref{sec:4}
we verify in detail the assumptions of
\thref{theorem:existenceCOMS} for the three energy families
$E^{\alpha,p}$, $\intM^{(p,q)}$, and $\TP^{(p,q)}$, thus proving
\thref{thm:1.1}. One of these assumptions is the continuous
differentiability of the knot energy, which is known for the
generalised integral Menger curvature and tangent-point energies,
but -- to the best of our knowledge -- 
for O'Hara's energies $E^{\alpha,p}$ so far only for $p=1$;
see 
\cite[Theorem 1.1]{BlattReiter:2013:StationarypointsofOHarasknotenergies}. 
Kawakami and Nagasawa 
\cite{KawakamiNagasawa:2020:VariationalformulaeandestimatesofOHarasknotenergies} established $L^1$-bounds for the integrands of the first and 
second variation of a variant of O'Hara's energy that
coincides with $\Eap$ only on curves parametrised by arc length.
Since
we need bounds for more general parametrisations and their ansatz seems non-transferable, we have included a full proof of continuous 
differentiability of $\Eap$ in Section \ref{sec:5}; see \thref{theorem:C1OHara}, which may be of independent interest.
Let us add that the limiting process $\varepsilon\to 0$ for solutions
of \eqref{eq:gradflow}
is treated
in \thref{prop:eps_to_0} in the general metric
setting, and specified in \thref{cor:eps_to_0}
to the 
situation described in \thref{thm:1.1}.
The appendix contains some technical 
material on the geometry of Sobolev-Slobodecki\v{\i} spaces and
some differentiation rules.

\section{Preliminaries}\label{sec:2}

\subsection{Curves of Maximal Slope in Metric Spaces}\label{sec:2.1}

Given a complete metric space $(\sS,d)$, an interval $I\subset \R$, 
and a number $\theta\ge 1$, a 
\textit{$\theta$-absolutely continuous curve} 
is a curve ${\bm u}: I \to \sS$ such that there exists a 
map $m\in L^{\theta}\left( I \right)$  with the property
$
	d\big(\bm u(s),\bm u(t)\big) 
	\le \int_s^t m(r)\;\dif r
    $
    for all $s,t\in I$ with $s<t$.
The set of all such curves is denoted by 
$AC^{\theta}(I,\sS)$ (writing $\AC_{\loc}^\theta(I,\sS)$
for the local variant of this space if $m$ is only in 
$L^\theta_{\loc}(I)$,
and abbreviating $AC(I,\sS):=AC^1(I,\sS)$). 
According to \cite[Theorem 1.1.2]{AGS08} 
every $\bm u\in AC_{\loc}^\theta(I,\sS)$ is 
\textit{metrically differentiable} almost everywhere in the 
following sense: its \textit{metric derivative}  
$
|\bm u'|(t):=\lim_{s\to t} 
\frac{d(\bm u(t),\bm u(s))}{|t-s|}
$ 
exists for almost every $t\in I$.

 Any functional $\phi\colon\sS\to (-\infty,\infty]$ with non-empty 
 \emph{effective domain} $\sD(\phi):=\{u\in\sS\colon \phi(u)
 < \infty\}$
admits a so-called \textit{local slope} $|\partial\phi|(u)$ of 
$\phi$ at $u\in\sD(\phi)$ given by
$
	|\partial \phi|(u) := \limsup_{d(v,u)\to 0}  
	\tfrac{(\phi(u)-\phi(v))^+}{d(u,v)} 
	\in [0,\infty]$,
where $a^+:= \max\{0,a\}$ for $a\in \R$ \cite[Definition 1.2.4]{AGS08}.
A map $g:\sS\to[0,\infty]$ is a \textit{strong upper gradient} for 
$\phi$ on $\sD(\phi)$, if for every curve $\bm u \in AC
( I,\sD(\phi))$ the composition $g \circ \bm u$ is Borel-measurable 
and
\begin{align}
    \textstyle
	\big|\phi\circ \bm u(t)-\phi\circ \bm u(s)\big| 
	\le \int_s^t g\circ \bm u (r)| \bm u'|(r) \;\dif r \quad\textnormal{for all $s,t\in I$ with $s<t$.}
	\label{EQN_h_1}
\end{align}
Notice that this notion of a strong upper gradient modifies slightly
that of 
\cite[Definition 1.2.1]{AGS08} in that inequality \eqref{EQN_h_1}
is only required for absolutely continuous curves whose image
is contained in the effective domain  $\sD(\phi)$.
This restriction does not affect  \thref{Theorem_Existence_Metric_COMS} 
below upon which our existence results build.
In the situations encountered in this work, the local slope turns out to
be a strong upper gradient.

With these notions at hand, one can define a 
\emph{$\theta$-curve of maximal slope for $\phi$ with respect to the strong upper gradient $g$ starting at $u_0\in\sD(\phi)$}. It is a curve 
$\bm u \in \linebreak AC^{\theta}_{\loc}\big([0,\infty),\sS\big)$ that satisfies $\bm u(0)=u_0$ and the \textit{energy dissipation equality}
\begin{align}
    \label{eq:energyDissipation}
    \textstyle
	\phi(\bm u(t))-\phi(\bm u(s)) = -\frac{1}{\beta}\int_{s}^t g^{\beta}(\bm u(r)) \;\dif r - \frac{1}{\theta} \int_s^t |\bm u'|^{\theta}(r)\,\dif r
\end{align}
for all $0\le s< t<\infty$, where $\theta^{-1} + \beta^{-1} =1$. 
At first sight, this definition seems to differ from that given
in \cite[Definition 1.3.2]{AGS08}.
Notice, however,  that in the present more specific situation,  
where $u_0\in\sD(\phi)$ and $g$ is 
a strong upper gradient, our definition is actually equivalent
to that in \cite{AGS08}.  
Indeed, since $\phi(u_0)<\infty$ and $\phi(u)>-\infty$ for all 
$u\in\sS$, equality \eqref{eq:energyDissipation} with $s=0$ and 
Young's inequality imply that $g\circ \bm u |\bm u'|$ is integrable on every compact interval $[0,t]$. Hence, by the defining inequality 
\eqref{EQN_h_1} for strong upper gradients, $\phi\circ\bm u$ is 
locally absolutely continuous and we deduce the defining
inequality (1.3.13) of \cite[Definition~1.3.2]{AGS08} 
with $\varphi = \phi\circ \bm u$. 
Conversely, 
\cite[Remark 1.3.3]{AGS08} implies that 
\eqref{eq:energyDissipation} holds. 
In that case, we also have the identities
\begin{align}
	g^{\beta}(\bm u(r)) = |\bm u'|^{\theta}(r) = 
	|\bm u'|(r)g\circ\bm u(r)\quad\textnormal{for a.e. 
	$r\in (0,\infty)$.} \label{EQN_h_5}
\end{align}
In fact, one can also require \eqref{eq:energyDissipation} only to 
hold with ``$\le$'' instead of equality, since the converse 
inequality is always satisfied as a consequence of 
Young's inequality and \eqref{EQN_h_1}.

A common way to obtain a curve of maximal slope is by carrying out 
three steps. Firstly, for a given step size, a discretised version of 
the energy dissipation equation is solved by iteratively solving 
minimisation problems associated with the step size. Secondly, 
using an interpolation method, each of the thus obtained piecewise 
constant solutions is transformed into a continuous curve. These 
curves form a relatively compact set and a limit curve is extracted. 
This limit curve is commonly referred to as a 
\textit{(generalised) minimising movement} and satisfies a weaker form 
of the energy dissipation equality. Thirdly, it remains to check that 
the minimising movement in fact also satisfies the energy dissipation 
equality. General assumptions on the functional have been 
formulated that ensure that each of the previous steps can be executed. 
We state these assumptions collected from \cite[Section 2.1 \& Remark 2.3.4]{AGS08} and the corresponding existence theorem.

\begin{assM}\label{Assumptions_Metric}
	The complete metric space $(\sS,d)$  is en\-dow\-ed with an 
additional \emph{weak topology} $\sigma$ which is Hausdorff, 
weaker than the topology induced by the metric $d$, and such that  $d$
is sequentially weakly lower semi-continuous\footnote{In our 
application, $\sigma$ will be the weak topology on a Banach space, 
so there is no risk of confusion regarding the expression ``weak''.}, 
i.e., for all sequences $u_k \xrightharpoonup[]{\sigma} u$ and 
$v_k \xrightharpoonup[]{\sigma} v$ as $k\to\infty$ there holds
$
			d(u,v) \le \liminf_{k\to\infty} d(u_k,v_k)$.
\end{assM}
{\bf Assumptions on the functional 
$\phi:\sS\to (-\infty,\infty]$ with $\sD(\phi)\not=\emptyset$ and 
fixed $\theta\in (1,\infty).$}\,
\begin{enumerate}
		\item[\rm ($\Phi$1)]
		\label{Assumption_Metric_2}
The functional $\phi$ is sequentially weakly lower semi-continuous 
on $d$-bounded sets, i.e., if $\sup_{k,l\in\N}d(u_k,u_l)<\infty$
		for a sequence $u_k \xrightharpoonup[]{\sigma} u$ as
		$k\to\infty$, then
$
\phi(u)\le\liminf_{k\to\infty}\phi(u_k)$.
\item[\rm ($\Phi$2)] \label{Assumption_Metric_3}
The functional $\phi$ is coercive in the sense that there exists 
$u_{\ast}\in \sS$, $B\in\R$, and $C>0$ such that 
$\phi(u) \ge B - C d(u,u_{\ast})^{\theta}$ for all $u\in \sS$.
\item[\rm ($\Phi$3)] \label{Assumption_Metric_4} 
Every $d$-bounded subset of a sublevel set of $\phi$ is relatively
weakly sequentially compact, i.e., for a sequence $(u_k)_k\subset\sS$
with $\sup_{k\in\N}\phi(u_k)<\infty$ and $\sup_{k,l\in\N}d(u_k,u_l)<
\infty $ there is $u\in\sS$ and a subsequence $(u_{k_m})_m
\subset (u_k)_k$ such that $u_{k_m}\xrightharpoonup[]{\sigma} u$ as
                $m\to\infty$.
\item[\rm ($\Phi$4)] \label{Assumption_Metric_5} 
The local slope $|\partial\phi|$ of $\phi$ is weakly sequentially 
lower semi-continuous on $d$-bounded subsets of sublevel sets 
of $\phi$, that is,  for every sequence 
$u_k \xrightharpoonup[]{\sigma} u$ as $k\to\infty$ with
	$
			\sup_{k\in\N}\{ d(u_k,u),\phi(u_k) \} <\infty$
			one has 
$
|\partial\phi|(u) \le \liminf_{k\to\infty} |\partial\phi|(u_n)$.
\item[\rm ($\Phi$5)] \label{Assumption_Metric_6}  
The local slope of $\phi$ is a strong upper gradient on\footnote{This 
is a slightly restricted version  of \cite[Remark 2.3.4 (i)]{AGS08},
but this does not affect the validity of \thref{Theorem_Existence_Metric_COMS} below.}
$\sD(\phi)$.
	\end{enumerate}

\begin{theorem}[Curves of maximal slope exist.]
\label{Theorem_Existence_Metric_COMS}
	Let $(\sS,d)$ be a metric space satisfying Assumption {\rm (M)},
	and suppose
$\phi:\sS\to (-\infty,\infty]$ satisfies for given
$\theta\in (1,\infty)$ Assumptions {\rm ($\Phi$1)--($\Phi$5)}, and let
$u_0\in\sD(\phi)$. Then there exists a $\theta$-curve of 
maximal slope for $\phi$ with respect to the strong
upper gradient 
$|\partial \phi|$ starting at $u_0$.
\end{theorem}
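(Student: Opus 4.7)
The plan is to follow the classical minimising-movements (De Giorgi) scheme as developed in Ambrosio-Gigli-Savar\'e, adapted to the present setting; the argument proceeds in four steps. First, for a time step $\tau>0$, I construct recursively a sequence $(U^n_\tau)_{n\in\N}$ starting at $U^0_\tau:=u_0$ by solving
$$U^{n+1}_\tau \in \arg\min_{v\in\sS}\Big\{\phi(v)+\frac{1}{\theta\tau^{\theta-1}}d(v,U^n_\tau)^\theta\Big\}.$$
Existence at each step follows from the direct method: coercivity ($\Phi$2) combined with the penalty keeps every minimising sequence $d$-bounded; ($\Phi$3) extracts a $\sigma$-convergent subsequence; and weak lower semicontinuity of $\phi$ on $d$-bounded sets ($\Phi$1), together with the joint weak lower semicontinuity of $d$ from Assumption (M), produces a minimiser.

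Next, I introduce on the partition of step size $\tau$ both the piecewise constant interpolant $\bar U_\tau$ and De Giorgi's variational interpolant $\tilde U_\tau$, obtained by solving the analogous minimisation problem at intermediate times. Exploiting minimality at each step by a standard comparison-function argument, I would derive the discrete energy inequality
$$\phi(\bar U_\tau(t))+\frac{1}{\theta}\int_0^t|\bar U_\tau'|^\theta\,\dif r+\frac{1}{\beta}\int_0^t|\partial\phi|^\beta(\tilde U_\tau(r))\,\dif r\le\phi(u_0),$$
where $|\bar U_\tau'|$ is the piecewise constant discrete speed. The variational interpolant appears in the slope term precisely because only at such points one controls $|\partial\phi|$ through a discrete difference quotient coming from the Euler-Lagrange estimate of the minimisation problem.

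Third, I would pass to the limit $\tau\to 0$. The a priori bound above combined with coercivity yields equi-bounded $\theta$-variation of $(\bar U_\tau)_\tau$, so a metric-space variant of Arzel\`a-Ascoli (cf.\ \cite[Prop.~3.3.1]{AGS08}) provides a subsequence converging pointwise in $\sigma$ to some $\bm u\in \AC^\theta_{\loc}([0,\infty),\sS)$ with $\bm u(0)=u_0$; the penalty control $\sum_n d(U^{n+1}_\tau,U^n_\tau)^\theta = O(1)$ forces $d(\tilde U_\tau,\bar U_\tau)\to 0$, so $\tilde U_\tau$ converges to the same limit. Taking $\liminf_{\tau\to 0}$ on both sides of the discrete inequality then uses ($\Phi$1) for the energy term, weak lower semicontinuity of the metric derivative (a consequence of Assumption (M)) for the speed integral, and crucially ($\Phi$4) for the slope integral applied to $\tilde U_\tau$, yielding the ``$\le$'' version of \eqref{eq:energyDissipation}. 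The matching ``$\ge$'' is automatic: since $|\partial\phi|$ is a strong upper gradient on $\sD(\phi)$ by ($\Phi$5), the defining inequality \eqref{EQN_h_1} combined with Young's inequality produces the reverse bound on every $[s,t]$, giving equality.

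The main obstacle is the limit passage in the slope term at the end of step three. This is why the variational interpolant $\tilde U_\tau$ is introduced rather than working with $\bar U_\tau$ alone, and it is precisely why the weak lower semicontinuity assumption ($\Phi$4) is included in the hypothesis list. When applying this abstract result to the three concrete knot-energy families in Section \ref{sec:4}, verifying ($\Phi$4) for the total energy $\phi$ of \eqref{eq:total-energy} will be the most delicate step, since the local slope is only defined via a $\limsup$ and enjoys no obvious convexity structure.
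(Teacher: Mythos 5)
Your proposal is correct and follows essentially the same route as the paper: the paper's proof simply invokes the minimising-movement machinery of \cite{AGS08} (Corollary 2.2.2 for the recursive minimisation, Proposition 2.2.3 for the generalised minimising movement, and Theorem 2.3.3 together with ($\Phi$4) identifying the relaxed slope with the local slope), which is exactly the scheme you unpack step by step, including the De Giorgi interpolant and the Young-inequality argument for the reverse energy inequality. The only detail worth flagging is that solvability of each discrete minimisation step requires the step size to be below the threshold $\tau_{\ast}=(\theta C)^{-1/(\theta-1)}$ coming from ($\Phi$2), which the paper states explicitly and your sketch leaves implicit.
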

\begin{proof}	
Notice that Assumption ($\Phi$2) is equivalent to  $
\inf_{v\in\sS} [Cd(v,u_{\ast})^{\theta} + \phi(v)] >-\infty,$
	which ensures that $\tau_{\ast} := (\theta C)^{-\frac{1}{\theta-1}}$ satisfies
$
\inf_{v\in\sS} 
[\frac{1}{\theta}\tau_{\ast}^{1-\theta} d(u_{\ast},v)^{\theta} 
+ \phi(v)] > -\infty$.
Therefore, for every uniform partition 
$\{ 0< \frac{1}{m}<\frac{2}{m}<\dots<\frac{k}{m}<\dots \}$ of 
$[0,\infty)$ with $m\in \N$ such that $\frac{1}{m}<\tau_{\ast}$, 
Assumptions (M) and ($\Phi$1)-($\Phi$3) 
enable us to employ \cite[Corollary 2.2.2]{AGS08} to find a solution 
of the recursive minimisation problem 
\cite[(2.0.4)]{AGS08} starting at $u_0$. 
Then, \cite[Proposition 2.2.3]{AGS08} implies that the family of 
these discrete solutions admit, up to a subsequence, an absolutely 
continuous limit curve $ \bm u$ (called a 
\emph{generalised minimising movement}). Now, Assumption ($\Phi$4) 
implies that the relaxed slope $|\partial^{-}\phi|$, 
\cite[(2.3.1)]{AGS08}, is equal to the local slope on $\sD(\phi)$. 
Moreover, by Assumption ($\Phi$5), it is also a strong upper gradient on 
$\sD(\phi)$. Therefore, Theorem \cite[Theorem 2.3.3]{AGS08} is 
applicable and we deduce that $\bm u$ is a curve of maximal slope 
with respect to $|\partial^{-}\phi| = |\partial\phi|$. 
Notice that our additional restrictions in the definition of 
upper gradients and Assumption ($\Phi$5) are non-essential, as both 
only play a role at \cite[(3.4.2)]{AGS08}. There, the curve $
\bm u$ has finite energy because of \cite[(3.4.1)]{AGS08}.

\end{proof}

The following proposition shows how curves of maximal slope in smaller subspaces of a given metric space give rise to a limiting curve.

\begin{proposition} \label{prop:eps_to_0}
	Let $\theta\in(1,\infty)$. Let $(\sS_0,d_0, \sigma)$ be a metric space that satisfies Assumption {\rm (M)} and let $\big((\sS_{\epsilon},d_{\epsilon})\big)_{\epsilon>0}$ be metric spaces such that $\sS_{\epsilon} \subset \sS_{0}$ and such that there exists a constant $c_0>0$ such that $d_{0}(u,v) \le c_0 d_{\epsilon}(u,v)$ holds for all $u,v\in \sS_{\epsilon}$ and all $\epsilon>0$. Let $\phi:\sS\to(-\infty,\infty]$ be a functional that satisfies Assumptions {\rm ($\Phi$1)--($\Phi$3)} with $\sS=\sS_0$ and let $u_0 \in \sD(\phi)$. Assume that for every $\epsilon>0$ there exists $u_{0,\epsilon} \in\sS_{\eps}$ such that
	\begin{equation}
	\textstyle
		\sup\limits_{\epsilon>0} d_0(u_{0,\epsilon}, u_0) <\infty,\quad u_{0,\epsilon} \xrightharpoonup[(\epsilon\to0)]{\sigma} u_0, \quad \text{and}\quad \phi(u_{0,\epsilon})\xrightarrow[(\epsilon\to 0)]{}\phi(u_0)\label{anfangskonvergenz}
	\end{equation}
	and let $\bm{u}_{\epsilon} \in AC_{\mathrm{loc}}^{\theta}\left( [0,\infty),(\sS_{\epsilon}, d_{\eps}) \right)$ be a $\theta$-curve of maximal slope for $\phi$ with respect to the strong upper gradient $g_{\epsilon}$ starting at $u_{0,\epsilon}$. Then there exists a subsequence $\epsilon_k \to 0$ and a curve $\bm{u^{\ast}} \in AC^{\theta}_{\loc}\left( [0,\infty),(\sS_0,d_0) \right)$ such that $\bm{u^{\ast}}(0)=u_0$ and
		$
			\bm{u}_{\epsilon_k}(t) \xrightharpoonup[(k\to \infty)]{\sigma} \bm{u}^{\ast}(t)$
	holds for every $t\ge 0$. Moreover, $\phi(\bm{u}^{\ast}(t)) \le \phi(u_0)$ for all $t\ge 0$.
\end{proposition}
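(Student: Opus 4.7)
The plan is to mimic the first steps of Ambrosio--Gigli--Savaré's compactness programme (see \cite[Prop.~3.3.1]{AGS08}), exploiting the comparability of the metrics and the fact that a $\theta$-curve of maximal slope automatically furnishes $L^{\theta}$-control of its metric derivative.

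First I would exploit the energy dissipation equality \eqref{eq:energyDissipation} for each $\bm u_{\epsilon}$ on $(\sS_{\epsilon},d_{\epsilon})$ to deduce that
\[
\textstyle
\phi(\bm u_{\epsilon}(t)) + \tfrac1{\theta}\int_0^t |\bm u_{\epsilon}'|_{\epsilon}^{\theta}(r)\,\dif r + \tfrac1{\beta}\int_0^t g_{\epsilon}^{\beta}(\bm u_{\epsilon}(r))\,\dif r = \phi(u_{0,\epsilon}),
\]
where $|\bm u_{\epsilon}'|_{\epsilon}$ is the metric derivative taken with respect to $d_{\epsilon}$. Together with \eqref{anfangskonvergenz} and Assumption~($\Phi2$), both $\phi(\bm u_{\epsilon}(t))$ and the kinetic integral are bounded uniformly in $\epsilon$ on every compact time interval. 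Using $d_0\le c_0 d_{\epsilon}$ pointwise, the same inequality holds for the corresponding metric derivatives, so that $|\bm u_{\epsilon}'|_0\in L^{\theta}([0,T])$ uniformly in $\epsilon$, and in particular the family $(\bm u_{\epsilon})_{\epsilon>0}$ is equi-$\theta$-Hölder continuous with respect to $d_0$ on $[0,T]$.

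Next I would establish pointwise weak compactness. For each $t\ge 0$ the sublevel bound $\phi(\bm u_{\epsilon}(t))\le \phi(u_{0,\epsilon})\le C$ and the $d_0$-bound inherited from the previous step together with Assumption~($\Phi3$) make $\{\bm u_{\epsilon}(t):\epsilon>0\}$ relatively $\sigma$-sequentially compact. A standard diagonal extraction along a countable dense subset $D\subset [0,\infty)$ yields a subsequence $\epsilon_k\to 0$ and a limit $\bm u^{\ast}:D\to\sS_0$ with $\bm u_{\epsilon_k}(t)\xrightharpoonup{\sigma}\bm u^{\ast}(t)$ for all $t\in D\cup\{0\}$, and with $\bm u^{\ast}(0)=u_0$ by \eqref{anfangskonvergenz}. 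Now the sequential weak lower semi-continuity of $d_0$ from Assumption~(M) applied to the equi-continuity estimate
\[
\textstyle
d_0(\bm u_{\epsilon_k}(s),\bm u_{\epsilon_k}(t))\le \int_s^t |\bm u_{\epsilon_k}'|_0(r)\,\dif r\le (t-s)^{1/\beta}\Bigl(\int_s^t |\bm u_{\epsilon_k}'|_0^{\theta}(r)\,\dif r\Bigr)^{1/\theta}
\]
together with a further diagonal/subsequence argument (extracting a weak limit of the bounded sequence $|\bm u_{\epsilon_k}'|_0$ in $L^{\theta}_{\loc}$) extend $\bm u^{\ast}$ uniquely to a curve in $\AC^{\theta}_{\loc}([0,\infty),(\sS_0,d_0))$ and upgrade the pointwise weak convergence to all $t\ge 0$.

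Finally, for every $t\ge 0$ the bound $\sup_{k}d_0(\bm u_{\epsilon_k}(t),u_0)<\infty$ coming from the equi-Hölder estimate allows me to invoke Assumption~($\Phi1$) at the point $\bm u^{\ast}(t)$ to conclude
\[
\textstyle
\phi(\bm u^{\ast}(t))\le \liminf_{k\to\infty}\phi(\bm u_{\epsilon_k}(t))\le \liminf_{k\to\infty}\phi(u_{0,\epsilon_k})=\phi(u_0),
\]
where the second inequality is the energy-decrease property built into each curve of maximal slope. The main obstacle I expect is the pointwise weak convergence at \emph{every} $t\ge 0$, not just on a dense set: this requires carefully combining the equi-Hölder estimate above with the weak lower semi-continuity of $d_0$ and a diagonal extraction, while taking care that the candidate weak limit does not depend on the subsequence -- uniqueness of $\bm u^{\ast}(t)$ on $D$ together with the $d_0$-equi-continuity is precisely what makes this step go through.
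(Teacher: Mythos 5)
Your overall strategy coincides with the paper's: energy dissipation equality, transfer of the metric-derivative bound from $d_{\epsilon}$ to $d_0$ via $d_0\le c_0 d_{\epsilon}$, a refined Arzel\`a--Ascoli argument based on ($\Phi$3) and the weak lower semi-continuity of $d_0$, and finally ($\Phi$1) for the energy bound. However, there is one genuine gap at the first quantitative step. You assert that the dissipation equality ``together with \eqref{anfangskonvergenz} and Assumption ($\Phi$2)'' immediately yields uniform bounds on $\phi(\bm u_{\epsilon}(t))$ from below and on the kinetic integral on compact time intervals. This is not immediate: ($\Phi$2) only gives $\phi(u)\ge B-C\,d_0(u,u_{\ast})^{\theta}$, so the dissipation identity yields
\[
\textstyle \tfrac{1}{\theta}\int_0^t|\bm u_{\epsilon}'|_{\epsilon}^{\theta}(r)\,\dif r\le \phi(u_{0,\epsilon})-B+C\,d_0(\bm u_{\epsilon}(t),u_{\ast})^{\theta},
\]
and the distance term on the right is itself controlled only through $\int_0^t|\bm u_{\epsilon}'|_{0}(r)\,\dif r$ --- a circular dependency. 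If $\phi$ were bounded below this would be harmless, but in the stated generality it is not, and the uniform bound you claim is precisely what must be proved. The paper breaks the circle by differentiating $r\mapsto d_0^{\theta}(\bm u_{\epsilon}(r),u_0)$, absorbing the kinetic term with Young's inequality (with a small parameter $\delta$ chosen so that $\delta Cc_0^{\theta}<1$), and then applying Gronwall's inequality to obtain $d_0^{\theta}(\bm u_{\epsilon}(t),u_0)\le c_1e^{c_2t}$ uniformly in $\epsilon$; only then do the uniform energy and kinetic bounds, and hence your equi-H\"older estimate, follow. You should add this Gronwall step, which constitutes roughly half of the paper's proof. The remainder of your argument --- diagonal extraction over a dense set, extension to all $t\ge 0$ using equicontinuity, completeness, and the weak lower semi-continuity of $d_0$, and the final $\liminf$ chain for the energy --- is sound and is essentially the content of the Arzel\`a--Ascoli result \cite[Proposition 3.3.1]{AGS08} that the paper invokes directly.
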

\begin{proof}
Let us first fix some notation. For a given metric space $\sS \in \{ \sS_0, \sS_{\epsilon}\}$, a locally absolutely continuous curve $\bm{v} \in AC_{\mathrm{loc}}\left( [0,\infty),\sS \right)$ and a point $v\in \sS$, we denote by $|\bm{v}'|_{\sS}(r)$ the metric derivative of $\bm{v}$ at $r\ge 0$ taken with respect to the metric on $\sS$. Moreover, since we are only interested in small $\epsilon$, we may assume that $\sup\limits_{\epsilon >0} \phi(u_{0,\epsilon})<\infty$.

The arguments presented here are continuous analogues of those used in \cite[Lemma 3.2.2]{AGS08} and \cite[Corollary 3.3.4]{AGS08}. The existence of a converging subsequence relies on the theorem of Arzel{\`a}-Ascoli \cite[Proposition 3.3.1]{AGS08}. We will check its prerequisites, which are consequences of the energy dissipation equalities
\begin{align}
    \textstyle
	\phi\big(\bm{u}_{\epsilon}(t)\big) + \frac{1}{\theta} \int_s^t |\bm{u}_{\epsilon}'|_{\sS_{\epsilon}}^{\theta}(r) \;\dif r + \frac{1}{\beta} \int_s^t g_{\epsilon}^{\beta}\big(\bm{u}_{\epsilon}(r)\big)\;\dif r =  \phi\big(\bm{u}_{\epsilon}(s)\big),\; 0\le s<t<\infty. \label{eq:1}
\end{align}
From the assumptions on the metrics $d_0,d_{\epsilon}$, it follows that
\begin{align*}
    \textstyle
	d_0(\bm{u}_{\epsilon}(t),\bm{u}_{\epsilon}(s)) \le c_0 d_{\epsilon}(\bm{u}_{\epsilon}(t),\bm{u}_{\epsilon}(s)) \le c_0 \int_s^t |\bm{u}_{\epsilon}'|_{\sS_{\epsilon}}(r)\;\dif r\quad\Foa 0\le s<t<\infty.
\end{align*}
Therefore, $\bm{u}_{\epsilon} \in AC_{\mathrm{loc}}^{\theta}\left( [0,\infty),(\sS_0, d_0) \right)$ with $|\bm{u}_{\epsilon}'|_{\sS_0}(r) \le c_0 |\bm{u}_{\epsilon}'|_{\sS_{\epsilon}}(r)$ for almost every $r\ge 0$. Notice that Assumption {\rm ($\Phi$2)} also holds for $u_{\ast} =u_0$, possibly with different constants. We deduce from Assumption {\rm ($\Phi$2)} and \eqref{eq:1} with $s=0$ that
\begin{align}
\textstyle
\int_{0}^{t} |\bm{u}_{\epsilon}'|^{\theta}_{\sS_0}(r) \;\dif r &\textstyle\le c_0^{\theta}\int_0^{t} |\bm{u}_{\epsilon}'|^{\theta}_{\sS_{\epsilon}}(r)\;\dif r \le \theta c_0^{\theta} \big(\phi(u_{0, \epsilon}) - \phi(\bm{u}_{\epsilon}(t)) \big)
\nonumber\\
&\le \theta c_0^{\theta} \big(\phi(u_{0, \epsilon}) - B \big) + C\theta c_0^{\theta}\, d_0^{\theta}(\bm{u}_{\epsilon}(t),u_0)  \label{eq:2}
\end{align}
On the other hand, since $\bm{u}_{\epsilon}$ and $d_0(\bm{u}_{\epsilon}(\cdot),u_0)$ are locally absolutely continuous, we have
\begin{gather*}
	\textstyle\frac{\dif }{\dif r} d_0( \bm{u}_{\epsilon}(r), u_0) = \lim\limits_{s\to r} \frac{d_0(\bm{u}_{\epsilon}(s), u_0)- d_0(\bm{u}_{\epsilon}(r), u_0)}{s-r} \le \lim\limits_{s\to r} \frac{d_0(\bm{u}_{\epsilon}(s), \bm{u}_{\epsilon}(r))}{|s-r|} = |\bm{u}_{\epsilon}'|_{\sS_0}(r)\\
	\textstyle \text{and thus} \quad\frac{\dif}{\dif r} d_0^{\theta}( \bm{u}_{\epsilon}(r), u_0) \le \theta d_0^{\theta-1}( \bm{u}_{\epsilon}(r), u_0) |\bm{u}_{\epsilon}'|_{\sS_0}(r)\quad  \text{ for almost every $r\ge 0$.}
\end{gather*}
Using Young's inequality with $\delta>0$, $a = \sqrt[\theta]{\delta}\abs{\bm u_\epsilon'}_{\sS_0}$, $b=\theta\sqrt[\theta]{\delta^{-1}} d_0^{\theta - 1}$, and recalling that $\beta = \tfrac{\theta}{\theta-1}$, we obtain for almost every $r\ge 0$
\begin{align}
\textstyle
	\frac{\dif}{\dif r} d_0^{\theta}( \bm{u}_{\epsilon}(r), u_0) \le \frac{\delta}{\theta} |\bm{u}_{\epsilon}'|_{\sS_0}^{\theta}(r) + \frac{\theta^{\beta}}{\beta\delta^{\beta-1}} d_0^{\theta}( \bm{u}_{\epsilon}(r), u_0).\label{2.8}
\end{align}
Therefore,
\begin{align*}
\textstyle
	d^{\theta}_0(\bm{u}_{\epsilon}(t),u_0) &\textstyle= d_0^{\theta}(u_{0,\eps},u_0) + \int_{0}^{t} \frac{\dif}{\dif r} d^{\theta}_0(\bm{u}_{\epsilon}(r),u_0)\;\dif r\\
	&\textstyle \overset{\eqref{2.8}}{\le} d_0^{\theta}(u_{0,\eps},u_0) + \frac{\delta}{\theta}\int_0^t |\bm{u}_{\epsilon}'|_{\sS_0}^{\theta}(r) \;\dif r + \frac{\theta^{\beta}}{\beta\delta^{\beta-1}} \int_0^t d_0^{\theta}( \bm{u}_{\epsilon}(r), u_0)\;\dif r\\
	\textstyle
	\textstyle\overset{\eqref{eq:2}}{\le} d_0^{\theta}(u_{0,\eps},u_0)& + \delta c_0^{\theta} \big(\phi(u_{0, \epsilon}) - B \big) +\delta Cc_0^{\theta}\, d_0^{\theta}(\bm{u}_{\epsilon}(t),u_0) + \frac{\theta^{\beta}}{\beta\delta^{\beta-1}} \int_0^t d_0^{\theta}( \bm{u}_{\epsilon}(r), u_0)\;\dif r.
\end{align*}
Choosing, e.g., $\delta = (2 C c_0^{\theta})^{-1}$, and using \eqref{anfangskonvergenz}, we deduce for some constants $c_1,c_2>0$ independent of $\epsilon$ and $t$ the inequality
\begin{align}
\textstyle
	d_0^{\theta}(\bm{u}_{\epsilon}(t),u_0) \le c_1 + c_2 \int_0^t d_0^{\theta}( \bm{u}_{\epsilon}(r), u_0)\;\dif r.\nonumber
\end{align}
It follows from Gronwall's inequality \cite[Corollary 6.6]{Hale.1980} that
\begin{align}
\textstyle
	d_0^{\theta}(\bm{u}_{\epsilon}(t),u_0) \le c_1 \exp(c_2 t). \label{eq:6}
\end{align}

Now fix $T>0$. It follows immediately from \eqref{eq:1} and from \eqref{eq:6} that 
\begin{align}
\textstyle
	\sup\limits_{\epsilon>0} \sup\limits_{0\le t\le T}\phi(\bm{u}_{\epsilon}(t)) <\infty,\quad \text{and} \quad \sup\limits_{\epsilon>0}\sup\limits_{0\le t\le T} d_0(\bm{u}_{\epsilon}(t),u_0) <\infty. \label{eq:4}
\end{align}
Therefore, the maps $\fres{\bm{u}_{\epsilon}}{[0,T]}$ take their values in a $d_0$-bounded sublevel set of $\phi$, which, by Assumption {\rm ($\Phi$3)} is relatively weakly sequentially compact. Moreover, a consequence of \eqref{eq:2} and \eqref{eq:6} with $t=T$ is that
\begin{align}
    \textstyle
    \label{prop:eps_to_0:eq:LThetaBound}
	\sup\limits_{\epsilon>0}\Norm{\abs{\bm{u}_{\epsilon}'}_{\sS_0}}^{\theta}_{L^{\theta}([0,T])} = \sup\limits_{\epsilon>0}\int_{0}^{T} |\bm{u}_{\epsilon}'|^{\theta}_{\sS_0}(t) \;\dif t  <\infty.
\end{align}
Since $L^{\theta}([0,T])$
is reflexive, there exists a null-sequence $(\epsilon_k)_{k\in \N}$ such that, with the notation $\bm{u}_k := \bm{u}_{\epsilon_k}$, the metric derivatives $|\bm{u}_{k}'|_{\sS_0}$ converge weakly to some $A_{T}\in L^{\theta}([0,T])$. For all $0\le s<t\le T$ the characteristic function $\chi_{[s,t]}$ is of class $ L^{\beta}([0,T])$ (recall that $\theta^{-1}+\beta^{-1}=1$) and therefore
\begin{align}
    \textstyle
	\limsup\limits_{k\to \infty}  d_0(\bm{u}_{k}(t),\bm{u}_{k}(s)) \le \limsup\limits_{k\to \infty} \int_s^{t} |\bm{u}_{k}'|_{\sS_0}(r)\;\dif r = \int_s^t A_{T}(r) \;\dif r. \label{eq:3}
\end{align}

Using \cite[Proposition 3.3.1]{AGS08} with $\omega(s,t) := |\int_s^t A_{T}(r)\;\dif r|$, we deduce from \eqref{eq:3} and \eqref{eq:4} that there exists a map $\bm{u}^{\ast}_{T}:[0,T]\to \sS_0$, such that, up to a subsequence,
\begin{align}
\textstyle
\bm{u}_{k}(t) \xrightharpoonup[(k\to\infty)]{\sigma} \bm{u}^{\ast}_{T}(t) \Foa
t\in [0,T].
\label{eq:5}
\end{align}
By a diagonal argument, we find $\bm{u}^{\ast}:[0,\infty)\to \sS_0$ and $A \in L^{\theta}_{\loc}\left( [0,\infty) \right)$ such that, up to another subsequence, \eqref{eq:5} holds with $\bm{u}^{\ast}$ for every $t\ge 0$ and such that for every $T>0$ there holds $\bm{u}^{\ast} = \bm{u}^{\ast}_{T}$ and $A=A_{T}$ almost everywhere on $[0,T]$. Since the metric $d_0$ is sequentially weakly lower semicontinuous and because of \eqref{eq:3} and \eqref{eq:5}, we deduce that for all $0\le s<t<\infty$
\begin{align}
\textstyle
	d_0(\bm{u}^{\ast}(t),\bm{u}^{\ast}(s)) \le \liminf_{k\to\infty} d_0(\bm{u}_{k}(t),\bm{u}_{k}(s)) \le \int_s^t A(r) \;\dif r  \label{eq:7}
\end{align}
whence $\bm{u}^{\ast} \in AC^{\theta}_{\loc}\left( [0,\infty),(\sS_0,d_0) \right)$ and $|(\bm{u}^{\ast})'|_{\sS_0}(r) \le  A(r)$ for almost every $r\ge 0$.

Because of \eqref{eq:5}, Assumption {\rm ($\Phi$1)}, \eqref{eq:1} and \eqref{anfangskonvergenz}, the energy remains bounded by its initial value:
\begin{align}
 \phi(\bm{u}^{\ast}(t)) \le \liminf_{k\to\infty} \phi(\bm{u}_{k}(t)) \le \liminf_{k\to\infty} \phi(\bm{u}_{k}(0)) = \phi(u_0). \nonumber
\end{align}
\end{proof}

The following lemma shows that under additional assumptions the limiting curve is actually a curve of maximal slope. We will not need this result in the remainder of
this paper.
\begin{lemma} \label{lemma:Missing_Ingredient}
	If in addition to the assumptions of \thref{prop:eps_to_0} with $c_0=1$, we also have for a strong upper gradient $g$ of $\phi$ with respect to $\sS_0$ and for every $u_{\epsilon}\xrightharpoonup[(\epsilon\to0)]{\sigma} u$ in $\sS_0$ that $\liminf\limits_{\epsilon\to 0} g_{\epsilon}(u_{\epsilon}) \ge g(u)$, then the limiting curve $\bm{u}^{\ast}$ is a $\theta$-curve of maximal slope for $\phi$ starting at $u_0$.
\end{lemma}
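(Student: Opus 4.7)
The plan is to pass to the $\liminf$ in the energy dissipation equality satisfied by $\bm{u}_k:=\bm{u}_{\epsilon_k}$ with $s=0$, yielding a one-sided energy dissipation inequality for $\bm{u}^{\ast}$ in $(\sS_0, d_0)$, and then to extend this inequality to arbitrary $0\le s<t$ by invoking the strong upper gradient property of $g$ on $\sS_0$ together with Young's inequality.

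The ingredients are already at hand from the proof of \thref{prop:eps_to_0}: the pointwise weak convergence $\bm{u}_k(r)\xrightharpoonup{\sigma}\bm{u}^{\ast}(r)$ for every $r\ge 0$, the weak $L^{\theta}([0,T])$-convergence $|\bm{u}_k'|_{\sS_0}\rightharpoonup A$ with $|(\bm{u}^{\ast})'|_{\sS_0}\le A$ almost everywhere, the convergence $\phi(u_{0,\epsilon_k})\to\phi(u_0)=\phi(\bm{u}^{\ast}(0))$, and, crucially, the hypothesis $c_0=1$, which enforces $|\bm{u}_k'|_{\sS_0}\le|\bm{u}_k'|_{\sS_{\epsilon_k}}$ almost everywhere. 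Combining weak $L^{\theta}$-lower semi-continuity of the norm for the kinetic integral, Fatou's lemma together with the new hypothesis $\liminf_k g_{\epsilon_k}(\bm{u}_k(r))\ge g(\bm{u}^{\ast}(r))$ for the slope integral, and Assumption~($\Phi$1) for the energy term at the fixed time $t$, one obtains three separate $\liminf$-inequalities. Since all three summands are uniformly bounded below (using Assumption~($\Phi$2) together with the a priori $d_0$-bound on $\bm{u}_k(t)$ established in \thref{prop:eps_to_0}), super-additivity of $\liminf$ allows one to sum them and to invoke the energy dissipation equality for $\bm{u}_k$ to deduce
\[
\phi(\bm{u}^{\ast}(t))+\tfrac{1}{\theta}\int_0^t|(\bm{u}^{\ast})'|_{\sS_0}^{\theta}(r)\dif r+\tfrac{1}{\beta}\int_0^t g^{\beta}(\bm{u}^{\ast}(r))\dif r\le \phi(\bm{u}^{\ast}(0))\quad\text{for all }t\ge 0.
\]

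To extend this from $s=0$ to arbitrary $0\le s<t$, I would use that $\bm{u}^{\ast}\in AC^{\theta}_{\loc}([0,\infty),(\sS_0,d_0))$ takes values in $\sD(\phi)$, since $\phi(\bm{u}^{\ast}(r))\le\phi(u_0)<\infty$ by \thref{prop:eps_to_0}. Then the strong upper gradient property of $g$ combined with Young's inequality produces the reverse bound $\phi(\bm{u}^{\ast}(0))\le\phi(\bm{u}^{\ast}(s))+\tfrac{1}{\theta}\int_0^s|(\bm{u}^{\ast})'|_{\sS_0}^{\theta}+\tfrac{1}{\beta}\int_0^s g^{\beta}(\bm{u}^{\ast})$ for every $s\ge 0$. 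Subtracting this from the previous display applied at $t$ yields the energy dissipation inequality for all $0\le s<t$, which by the remark following \eqref{EQN_h_5} is equivalent to the equality defining a $\theta$-curve of maximal slope.

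I expect the most delicate step to be the reconciliation of the two metrics in the kinetic term. Without $c_0=1$ an unavoidable factor $c_0^{\theta}>1$ would survive in front of the $\int_0^t|(\bm{u}^{\ast})'|_{\sS_0}^{\theta}$-term, breaking the Young-type balance with the $g$-term and preventing the identification of $\bm{u}^{\ast}$ as a curve of maximal slope in $\sS_0$; together with the pointwise $\liminf$-assumption on the $g_{\epsilon}$, the condition $c_0=1$ is precisely what the argument requires.
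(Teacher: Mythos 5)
Your proposal is correct and follows essentially the same route as the paper's proof: pass to the limit inferior in the energy dissipation equality at $s=0$ term by term (using $c_0=1$, the weak $L^{\theta}$-convergence of the metric derivatives together with \eqref{eq:7}, the lower semicontinuity hypothesis on the $g_{\epsilon}$, Assumption ($\Phi$1), and \eqref{anfangskonvergenz}), then use the strong upper gradient property of $g$ with Young's inequality to obtain the reverse bound and extend to arbitrary $0\le s<t$. The only difference is cosmetic: the paper upgrades the resulting inequality to an equality before subtracting, whereas you subtract the two one-sided bounds and invoke the remark after \eqref{EQN_h_5}; both yield \eqref{eq:energyDissipation}.
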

\begin{proof}
	Let $\bm{u}^{\ast}$ and $(\bm{u}_k)_k$ be as in the proof of \thref{prop:eps_to_0}. Since $c_0=1$, we have $|\bm{u}_{\epsilon}'|_{\sS_0}(r) \le |\bm{u}_{\epsilon}'|_{\sS_{\epsilon}}(r)$ for a.e. $r\ge 0$. Using this inequality, {\rm ($\Phi$1)}, the weak convergence of $|\bm{u}_{k}'|_{\sS_0}$  to $A$ together with the inequality \eqref{eq:7}, the newly introduced assumption on the upper gradients, and \eqref{anfangskonvergenz}, we may pass to the limit inferior in the energy dissipation equality \eqref{eq:1} with $s=0$ to deduce that the limit curve $\bm{u}^{\ast}$ satisfies
	\begin{align}
	\textstyle
		\phi(\bm u^{\ast}(t)) + \frac{1}{\theta} \int_0^t |(\bm{u}^{\ast})'|_{\sS_0}^{\theta}(r) \;\dif r + \frac{1}{\beta} \int_0^{t} g^{\beta}(\bm{u}^{\ast}(r))\;\dif r \le \phi(u_0). \label{eq:8}
	\end{align}
  	Since $g$ is a strong upper gradient, we may use \eqref{EQN_h_1} in combination with Young's inequality to find that \eqref{eq:8} actually holds with equality. Subtracting equality \eqref{eq:8} with $s$ instead of $t$ from equality \eqref{eq:8} yields the energy dissipation equality \eqref{eq:energyDissipation} for $\bm{u}^{\ast}$.
\end{proof}

\subsection{Curves of Maximal Slope in Banach Spaces}\label{sec:2.2}

If the metric space is actually a real Banach space $(\sS,d) = 
(\CB, \Norm{\cdot}_{\CB})$ with dual space 
$(\CB^{\ast},\Norm{\cdot}_{\CB^{\ast}})$, then the objects of the 
previous section can be characterised in terms of classical 
derivatives; see, e.g. 
 \cite[Section 2.3]{Matt.2022} for a proof of the following result
 regarding the local slope and the metric derivative.

\begin{proposition} \label{Proposition_Metric_Quantities_in_Banach}
Let $(\CB,\Norm{\cdot}_{\CB})$ be a Banach space, $\phi:\CB\to (-\infty,\infty]$ a mapping, and suppose $\bm u: I\to \CB$ is a curve
defined on an interval $I\subset \R$. 
If $\phi$ is Fr{\'e}chet differentiable at some point 
$v\in \CB$ with derivative $D\phi[v]$, 
then $|\partial \phi|(v) = \Norm{D\phi[v]}_{\CB^{\ast}}$, and if
$\bm u$ is differentiable at $t\in I$ with 
derivative $\bm u'(t)$, then $|\bm u'|(t) = \Norm{\bm u'(t)}_{\CB}$.
\end{proposition}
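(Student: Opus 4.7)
The plan is to handle the two assertions separately; both reduce to substituting a first-order Taylor expansion into the definition of the relevant limit, and I do not anticipate any genuine obstacle, since Fréchet differentiability at a point provides exactly the pointwise control required by the defining limsup.

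For the metric derivative the computation is essentially one line: differentiability of $\bm u$ at $t$ means $\bm u(s) = \bm u(t) + (s-t)\bm u'(t) + o(|s-t|)$ as $s\to t$, and taking $\CB$-norms together with the reverse triangle inequality yields $\Norm{\bm u(s)-\bm u(t)}_\CB = |s-t|\,\Norm{\bm u'(t)}_\CB + o(|s-t|)$. Dividing by $|s-t|$ and passing to the limit as $s\to t$ then gives $|\bm u'|(t) = \Norm{\bm u'(t)}_\CB$, in particular showing that the defining limit exists.

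For the local slope I would argue by a two-sided estimate. The upper bound $|\partial\phi|(v) \le \Norm{D\phi[v]}_{\CB^{\ast}}$ follows from the Taylor expansion $\phi(w) - \phi(v) = D\phi[v](w-v) + o(\Norm{w-v}_\CB)$ combined with the operator-norm bound $|D\phi[v](w-v)| \le \Norm{D\phi[v]}_{\CB^{\ast}} \Norm{w-v}_\CB$: dividing by $\Norm{w-v}_\CB$, taking the positive part, and letting $w\to v$ delivers the estimate directly. For the matching lower bound I would select unit vectors $h_n\in\CB$ with $-D\phi[v](h_n) \to \Norm{D\phi[v]}_{\CB^{\ast}}$, which exist by definition of the operator norm (after flipping signs if necessary), and set $w_n := v + n^{-1} h_n$. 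Substituting these into the same Taylor expansion produces a particular admissible approaching sequence in the limsup for $|\partial\phi|(v)$ whose difference quotient converges to $\Norm{D\phi[v]}_{\CB^{\ast}}$. The only small care needed is the consistent handling of the positive part $(\cdot)^+$ in the definition of the local slope, but this is resolved precisely by the sign choice made when selecting the $h_n$.
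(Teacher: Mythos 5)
Your proposal is correct, and it is the standard argument: the paper itself does not spell out a proof but defers to \cite[Section 2.3]{Matt.2022}, where the same two-sided reasoning (Taylor expansion plus reverse triangle inequality for the metric derivative; operator-norm bound for the upper estimate of the slope and near-maximising unit directions $h_n$ with $w_n=v+n^{-1}h_n$ for the lower one) is carried out. The only points worth being explicit about in a write-up are that the Fréchet remainder is $o(\Norm{w-v}_{\CB})$ uniformly in direction, so it is harmless along the sequence $w_n$, and that the case $\Norm{D\phi[v]}_{\CB^{\ast}}=0$ is trivial since the local slope is nonnegative — both of which your sign discussion already covers.
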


If one has a more specific
Banach space $\CL$ with additional properties, then
$\theta$-curves of maximal slope satisfy differential inclusions. 
The following result is a simplified and slightly modified version of 
\cite[Proposition 1.4.1]{AGS08} and is stated in terms of the 
duality mapping
$\FJ_{\CL,\theta}:\CL\to 2^{\CL^*}$ with weight $\theta\in (1,\infty) $
defined in \eqref{eq:dual} of the introduction.

\begin{proposition} \label{Proposition_COMS_are_GFs}
Suppose $\CL$ is a Banach space,  
$\phi:\CL\to\R\cup \{\infty\}$ a mapping that is
Fr{\'e}chet-differentiable\footnote{One can easily show 
that under this
strong assumption on $\phi$ its local slope $|\partial\phi|$
is a strong upper gradient; see, e.g., \cite[Lemma 2.3.7]{Matt.2022}.}
on an open subset $\Omega\subset\CL$, and let
$\bm u \in AC^{\theta}_{\loc}\big([0,\infty),\Omega \big)$ be a 
$\theta$-curve of maximal slope for $\phi$ with respect to 
its local slope $|\partial \phi|$.
\begin{enumerate}
\item[\rm (i)] If $\CL$ has the Radon-Nikodym property then 
\begin{align}
\textstyle
-D\phi[\bm u(t)] \in \FJ_{\CL,\theta}\big( \bm u'(t)\big)
\quad \textnormal{for a.e. $t > 0$.} 
\label{EQN_h_6}
\end{align}
\item[\rm (ii)] If $\CL$ is reflexive, strictly convex, 
and has a G{\^a}teaux-differentiable norm on $\CL\setminus\{0\}$, then 
\begin{align}
\textstyle
\frac{\dif}{\dif t} \bm u(t) = 
- \FJ^{-1}_{\CL,\theta}\big( D\phi[\bm u(t)]\big)
\quad \textnormal{for a.e. $t > 0$.}  \label{EQN_h_7}
\end{align}
\end{enumerate}
\end{proposition}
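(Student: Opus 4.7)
The plan is to combine three ingredients: the Radon-Nikodym property, which guarantees that absolutely continuous curves into $\CL$ are almost everywhere strongly differentiable; \thref{Proposition_Metric_Quantities_in_Banach}, which identifies $|\bm u'|(t) = \|\bm u'(t)\|_{\CL}$ and $|\partial\phi|(\bm u(t)) = \|D\phi[\bm u(t)]\|_{\CL^*}$; and the pointwise identity \eqref{EQN_h_5} that comes from the energy dissipation equality satisfied by any curve of maximal slope.

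For part (i), I would first use the Radon-Nikodym property to conclude that $\bm u'(t) \in \CL$ exists for a.e.\ $t > 0$. Fréchet-differentiability of $\phi$ on $\Omega$ combined with differentiability of $\bm u$ gives the chain rule
\[
\tfrac{\dif}{\dif t}\phi(\bm u(t)) = \langle D\phi[\bm u(t)],\, \bm u'(t)\rangle_{\CL^*\times\CL}\quad\Fae t>0.
\]
On the other hand, differentiating the energy dissipation equality \eqref{eq:energyDissipation} and invoking \eqref{EQN_h_5} (with $g = |\partial\phi|$) yields
\[
\tfrac{\dif}{\dif t}\phi(\bm u(t)) = -\tfrac{1}{\beta}|\partial\phi|^{\beta}(\bm u(t)) - \tfrac{1}{\theta}|\bm u'|^{\theta}(t) = -|\bm u'|(t)\,|\partial\phi|(\bm u(t)),
\]
where the last equality uses $\tfrac1\theta + \tfrac1\beta = 1$ together with the coincidence of the two terms granted by \eqref{EQN_h_5}. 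Translating everything into Banach norms via \thref{Proposition_Metric_Quantities_in_Banach}, this produces the pairing identity
\[
\langle -D\phi[\bm u(t)],\, \bm u'(t)\rangle_{\CL^*\times\CL} = \|{-}D\phi[\bm u(t)]\|_{\CL^*}\cdot\|\bm u'(t)\|_{\CL},
\]
and \eqref{EQN_h_5} simultaneously yields the weight relation $\|\bm u'(t)\|_{\CL}^{\theta} = \|{-}D\phi[\bm u(t)]\|_{\CL^*}^{\beta}$. These two equalities are exactly the defining conditions of $\FJ_{\CL,\theta}$ in \eqref{eq:dual}, establishing \eqref{EQN_h_6}.

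For part (ii), every reflexive Banach space automatically has the Radon-Nikodym property, so part (i) is applicable and yields the differential inclusion. The additional assumptions then upgrade it to an equation: Gâteaux-differentiability of the norm on $\CL\setminus\{0\}$ makes $\FJ_{\CL,\theta}$ single-valued, while reflexivity combined with strict convexity (applied to $\CL^*$, whose norm properties are the predual of those imposed on $\CL$) ensures that $\FJ_{\CL,\theta}\colon \CL \to \CL^*$ is bijective. Inverting the inclusion \eqref{EQN_h_6} then gives \eqref{EQN_h_7}.

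The only delicate step is the chain rule at the almost every point of differentiability: it requires that both $\bm u$ is strongly differentiable (hence the need for the Radon-Nikodym property) and that $\phi$ is Fréchet-differentiable at $\bm u(t)$, which is built into the hypothesis via $\bm u \in AC^\theta_{\loc}([0,\infty),\Omega)$. Once this is in place, the remainder of the argument is a direct algebraic manipulation of the energy dissipation equality combined with the dictionary between metric and Banach notions supplied by \thref{Proposition_Metric_Quantities_in_Banach}.
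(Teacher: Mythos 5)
Your proposal is correct and follows essentially the same route as the paper: Radon--Nikodym for a.e.\ differentiability of $\bm u$, the chain rule combined with the energy dissipation equality and \eqref{EQN_h_5} to extract the pairing identity and the weight relation defining $\FJ_{\CL,\theta}$, and then single-valuedness, surjectivity, and injectivity of the duality map for part (ii). The only slight imprecision is your parenthetical in (ii): injectivity of $\FJ_{\CL,\theta}$ comes directly from strict convexity of $\CL$ itself (not from properties transferred to $\CL^*$), but this does not affect the argument.
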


\begin{proof}
(i)\,	The Radon-Nikodym property implies that absolutely continuous 
curves are differentiable almost everywhere, 
see e.g. \cite[Definition 1.2.5]{Arendt.2011}. 
Using the energy dissipation equality \eqref{eq:energyDissipation} 
together with \eqref{EQN_h_5} and \thref{Proposition_Metric_Quantities_in_Banach}, we infer that 
\begin{align*}
\textstyle
\phi\circ\bm u(t)-\phi\circ\bm u(s) &= 
-\int_s^t |\bm u'|(r) |\partial\phi|\circ \bm u(r)\;\dif r = 
-\int_s^t \Norm{\bm u'(r)}_{\CL} 
\Norm{D\phi[\bm u(r)]}_{\CL^{\ast}}\;\dif r
\end{align*}
	holds for all $0\le s<t<\infty$. By  the chain rule we obtain
\begin{align*}
\textstyle
-\Norm{\bm u'(t)}_{\CL} \Norm{D\phi[\bm u(t)]}_{\CL^{\ast}} = 
\frac{\dif}{\dif t} \phi\circ \bm u(t) = 
\langle D\phi[\bm u(t)],\bm u'(t) \rangle_{\CL^{\ast}\times \CL}
\quad\textnormal{for a.e. $t\ge 0$.}
\end{align*}
Together with identity \eqref{EQN_h_5} for $g:=|\partial\phi|$
 in combination with the definition of the duality 
mapping \eqref{eq:dual}
and \thref{Proposition_Metric_Quantities_in_Banach},
this implies \eqref{EQN_h_6}.

(ii)\,
If $\CL$ is reflexive, it has the Radon-Nikodym property 
\cite[Corollary 1.2.7]{Arendt.2011}. The differentiability of the norm 
$\Norm{\cdot}_{\CL}$ on $\CL\setminus\{0\}$
ensures that the duality mapping $\FJ_{\CL,\theta}$
is single-valued 
\cite[Corollary I.4.5]{Cioranescu:1990:GeometryofBanachSpacesDualityMappingsandNonlinearProblems}. 
The reflexivity of $\CL$ guarantees that $\FJ_{\CL,\theta}$ is 
surjective 
\cite[Theorem II.3.4]{Cioranescu:1990:GeometryofBanachSpacesDualityMappingsandNonlinearProblems}, 
and the strict convexity of $\CL$
implies that it is injective 
\cite[Theorem II.1.8]{Cioranescu:1990:GeometryofBanachSpacesDualityMappingsandNonlinearProblems}\footnote{Note that in the language of \cite{Cioranescu:1990:GeometryofBanachSpacesDualityMappingsandNonlinearProblems}, we use the weight function $\varphi(s) := s^{\frac \theta \beta}$.}.
Thus, the inclusion \eqref{EQN_h_6} is actually an  identity
and we deduce \eqref{EQN_h_7}.
\end{proof}

\section{Curves of maximal slope for abstract knot energies regularised by the logarithmic strain }\label{sec:3}

For most of the known knot energies $\EL$ the underlying energy space is
a Sobolev-Slobodecki\v{\i} space $W^{1+s,\rho}(\R/\Z,\R^n)$ for
some $s\in (0,1)$, $\rho>1/s$. In the still abstract setting of the present
section we regularise
any such 
$\EL$ by adding a power of the norm of the logarithmic strain
$\Sigma$ defined in \eqref{eq:log-strain} to form
a total energy $\phi$ as in \eqref{eq:total-energy}. 
It was shown in
\cite{KnappmannSchumacherSteenebruggeEtAl:2021:AspeedpreservingHilbertgradientflowforgeneralizedintegralMengercurvature}
that $\Sigma$ is continuously differentiable on regular injective
curves of that class.
\begin{lemma}[\protecting{\cite[Proposition~5.1]{KnappmannSchumacherSteenebruggeEtAl:2021:AspeedpreservingHilbertgradientflowforgeneralizedintegralMengercurvature}}] 
\label{Lemma_Log_Strain_is_Fréchet}
For any $s\in (0,1)$ and $\rho\in (\frac1s,\infty)$ one has
$
  \Sigma \in C^1( W^{1+s,\rho}_{\ir}( \R/\Z,\R^n) , 
  W^{s,\rho}( \R/\Z ) ).
	    $
          \end{lemma}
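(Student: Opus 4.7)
The plan is to exploit the Morrey-type embedding available under the assumption $s\rho>1$ in order to reduce the claim to standard superposition-operator results on Sobolev-Slobodecki\v{\i} spaces. First, since $s\rho>1$, one has the continuous embedding $W^{s,\rho}(\R/\Z)\hookrightarrow C^0(\R/\Z)$, so that for any $\gamma\in W^{1+s,\rho}_{\ir}(\R/\Z,\R^n)$ the derivative $\gamma'$ is continuous on the compact domain. Combined with regularity, this yields uniform bounds $0<m\le |\gamma'(x)|\le M$ pointwise; moreover, by the $C^0$-embedding both injectivity and regularity are stable under small $W^{1+s,\rho}$-perturbations, so $W^{1+s,\rho}_{\ir}$ is open in $W^{1+s,\rho}$.

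Next I would factor $\Sigma$ as a three-fold composition $\Sigma=\tfrac12\,\Lambda\circ N\circ D$, where $D\colon\gamma\mapsto\gamma'$ is the derivative operator, a bounded linear map from $W^{1+s,\rho}(\R/\Z,\R^n)$ to $W^{s,\rho}(\R/\Z,\R^n)$ and hence smooth; $N\colon v\mapsto |v|^2=v\cdot v$ is a continuous symmetric bilinear form into $W^{s,\rho}(\R/\Z)$ (hence smooth); and $\Lambda\colon g\mapsto \log g$ is composition with $\log$ on the open subset of positive functions in $W^{s,\rho}(\R/\Z)$ uniformly bounded away from zero. The map $N$ is well-defined and continuous between the stated spaces because $W^{s,\rho}(\R/\Z)$ is a Banach algebra under pointwise multiplication for $s\rho>1$; this is standard and is also used in the appendix of \cite{KnappmannSchumacherSteenebruggeEtAl:2021:AspeedpreservingHilbertgradientflowforgeneralizedintegralMengercurvature}. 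Writing $\log|\g'|=\tfrac12\log(|\g'|^2)$ in this way has the advantage of avoiding the square-root, which is not smooth at the origin.

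The heart of the argument is therefore to establish that $\Lambda$ is of class $C^1$ on the open subset $\{g\in W^{s,\rho}(\R/\Z)\colon g\ge m^2\}$. The mapping property is immediate from the Gagliardo seminorm: by the mean-value theorem $|\log g(x)-\log g(y)|\le m^{-2}|g(x)-g(y)|$, so $[\log g]_{s,\rho}\le m^{-2}[g]_{s,\rho}$, while $\log g$ is uniformly bounded and hence in $L^\rho$. For Fr\'echet differentiability at $g$ with candidate derivative $h\mapsto h/g$, one writes the remainder
\[
\log(g+h)-\log g-\frac{h}{g}=\int_0^1\Big(\frac{1}{g+th}-\frac{1}{g}\Big)h\,\d t
\]
and estimates it in $W^{s,\rho}$ by combining the Banach-algebra property with continuity of $g\mapsto 1/g$, which is proved by the same mean-value trick applied to the Lipschitz function $t\mapsto 1/t$ on $[m^2,M^2]$. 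Continuity of $g\mapsto D\Lambda[g]$ in the operator-norm topology of $\LL(W^{s,\rho},W^{s,\rho})$ then reduces again to continuity of $g\mapsto 1/g$.

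The main obstacle is this last step: upgrading the pointwise chain rule to a genuine $C^1$-statement on the fractional Sobolev scale. The difficulty is essentially technical rather than conceptual: one has to uniformise all Gagliardo estimates over a $W^{s,\rho}$-neighbourhood of $g$, simultaneously controlling the $L^\rho$-part and the double-integral part of the norm, and keep track of how the lower bound $m$ deteriorates under such perturbations. Once $\Lambda$ is handled, assembling $\Sigma=\tfrac12\,\Lambda\circ N\circ D$ by the chain rule gives $\Sigma\in C^1(W^{1+s,\rho}_{\ir},W^{s,\rho})$ with the explicit derivative $D\Sigma[\gamma]h=\gamma'\cdot h'/|\gamma'|^2$, as expected.
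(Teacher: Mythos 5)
The paper does not actually prove this lemma: it is imported verbatim from \cite[Proposition~5.1]{KnappmannSchumacherSteenebruggeEtAl:2021:AspeedpreservingHilbertgradientflowforgeneralizedintegralMengercurvature}, so there is no internal proof to compare against. Judged on its own, your argument is correct and would give a self-contained alternative. The decomposition $\Sigma=\tfrac12\,\Lambda\circ N\circ D$ is a clean device for avoiding the non-smoothness of $v\mapsto|v|$ at the origin (strictly speaking $N$ is a quadratic map, not a bilinear form, but it is the diagonal of the continuous symmetric bilinear map $(u,v)\mapsto u\cdot v$ and hence smooth). All the remaining work is concentrated in showing that $\Lambda=\log$ is $C^1$ on $\{g\in W^{s,\rho}(\R/\Z)\colon \inf g>0\}$, which is open by the embedding $W^{s,\rho}\hookrightarrow C^0$; the two ingredients you need there --- the Banach algebra property of $W^{s,\rho}$ for $s\rho>1$ and a quantitative bound on $\lVert 1/g\rVert_{W^{s,\rho}}$ in terms of $\lVert g\rVert_{W^{s,\rho}}$ and $\inf g$ --- are precisely what the present paper itself borrows from the appendix of the same reference in the proof of \thref{lemma:boundGeometricDerivative}. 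With those in hand, your integral remainder identity yields $\lVert\log(g+h)-\log g-h/g\rVert_{W^{s,\rho}}=O(\lVert h\rVert_{W^{s,\rho}}^2)$, since $\tfrac1{g+th}-\tfrac1g=\tfrac{-th}{g(g+th)}$ is $O(\lVert h\rVert_{W^{s,\rho}})$ uniformly in $t\in[0,1]$, and continuity of $g\mapsto D\Lambda[g]$ in operator norm reduces by the algebra property to local Lipschitz continuity of $g\mapsto 1/g$, obtained from $1/g_1-1/g_2=(g_2-g_1)/(g_1g_2)$. One small wording point: stability of injectivity requires $C^1$-closeness rather than $C^0$-closeness (cf.\ \thref{lem:open}), but since $W^{1+s,\rho}\hookrightarrow C^1$ for $s\rho>1$ this is harmless. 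I see no genuine gap; the step you flag as the main obstacle is indeed where the technical effort lies, but the estimates you sketch for it are the right ones and the final formula $D\Sigma[\g]h=\langle\g',h'\rangle/|\g'|^2$ is correct.
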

Here, and from now on we use the  following {\bf abbreviated notation}:
For a given space $\FL$ of closed curves
we denote from now on by  
 $\FL_{\mathrm{i}}$ the subset of all injective curves, 
  by $\FL_{\mathrm{r}}$ the regular curves, and 
   we write $\FL_{\mathrm{a}}$ for the arc length-parametrised curves
    contained in $\FL$, and we combine these indices $\mathrm{i,r}$
    and $\mathrm{a}$ as needed. 

To quantify injectivity of closed curves $\g$ we define the 
\emph{bi-Lipschitz 
constant} $\BiLip(\g)$ by means of
\begin{align*}\label{eq:bilip}
    \textstyle
    \BiLip(\gamma) := \inf_{\substack{x,y\in\R/\Z \\ x\ne y}} 
    \frac{|\gamma(y)-\gamma(x)|}{|x-y|_{\R/\Z}}.
\end{align*}

To 
apply the metric
existence result, \thref{Theorem_Existence_Metric_COMS}
of Section \ref{sec:2}, we 
formulate in the present section
a few general assumptions on the knot energy
$\EL$ so that the total energy $\phi$ in \eqref{eq:total-energy}
satisfies Assumptions
($\Phi$1)--($\Phi$5) of Section \ref{sec:2}. 
In order to verify ($\Phi$4), however,
we will need to consider a slightly smaller reflexive
Banach space $\CL$ that is compactly embedded in $W^{1+s,\rho}
(\R/\Z,\R^n)$ to finally prove the abstract existence
result of this section; see \thref{theorem:existenceCOMS}
below.

{\bf Assumptions on the knot energy $\EL$.}\,
Let $s\in (0,1)$ and $\rho\in (\frac1s,\infty)$.
\begin{enumerate}
\item[\rm (E1)]
$\EL$ is non-negative and of class $C^1$ on $W^{1+s,\rho}_{\ir}(\R/\Z,
\R^n)$
{\bf (differentiability)}.
\item[\rm (E2)] $\EL(\g)\le\liminf_{k\to\infty}\EL(\g_k)$ for all
$\g_k,\g\in W^{1+s,\rho}_\ir(\R/\Z,\R^n)$ with $\g_k\rightharpoonup \g$
as $k\to\infty$
{\bf (weak lower semi-continuity)}.
\item[\rm (E3)] For all $c_1,c_2>0$ there is a constant
$C=C(\EL,c_1,c_2)$ such that for all curves $\g\in C^1_{\mathrm i}(\R/\Z,\R^n)$
with $c_1^{-1}\le |\g'|\le c_1$ and $\EL(\g)\le c_2$ one has
$\BiLip(\g)\ge C$
{\bf (uniform control of bi-Lipschitz constant)}.
\end{enumerate}
Now we can state our abstract existence result for curves of maximal 
slope and solutions of the gradient flow equation.
\begin{theorem}
	        \label{theorem:existenceCOMS}
Let $s\in (0,1)$, $\rho\in (\frac1s,\infty)$ and $\theta,\kappa
\in (1,\infty)$. Suppose that $\CL\subset
W^{1+s,\rho}(\R/\Z,\R^n)$ is a compactly embedded reflexive
Banach space and that the energy $\EL$ satisfies Assumptions
{\rm (E1)--(E3)}, and $\g_0\in \sD(\phi)$, where
the total energy $\phi$ is defined as in \eqref{eq:total-energy}.
Then there exists $\bm u \in 
AC^{\theta}([0,\infty),(\CL,\Norm{\cdot}_{\CL}) )$ 
with $\bm u(0)=\gamma_0$ and 
			\begin{align}\label{eq:diff-inclusion}
 -D\phi[\bm u(t)] \in \FJ_{\CL,\theta}( \bm u'(t))\quad
 \textnormal{for a.e. $t>0$,}
                \end{align}
where $\FJ_{\CL,\theta}:\CL \mapsto 2^{\CL^*}$ 
is the $\theta$-duality mapping
defined in \eqref{eq:dual}. 
If, in addition, $\CL$ is strictly convex and has a 
G{\^a}teaux-differentiable norm on $\CL\setminus\{0\}$, then
          \begin{align}
\frac{\dif}{\dif t} \bm u(t) = 
-\FJ^{-1}_{\CL,\theta}\left( D\phi[\bm u(t)] \right)
\quad\textnormal{for a.e. $t>0$.}\label{EQN_h_8}
                \end{align}
   \end{theorem}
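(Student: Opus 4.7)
The plan is to apply the abstract metric existence result Theorem \ref{Theorem_Existence_Metric_COMS} to the complete metric space $(\CL,\|\cdot\|_{\CL})$ endowed with its weak topology $\sigma$, and then to upgrade the resulting $\theta$-curve of maximal slope to the differential inclusion (and, under the extra geometric hypotheses, to the gradient flow equation) by invoking \thref{Proposition_COMS_are_GFs}. Before checking the assumptions, I would record that, because $\CL$ embeds compactly into $W^{1+s,\rho}(\R/\Z,\R^n)$ and the latter embeds continuously into $C^1(\R/\Z,\R^n)$ (by $s\rho>1$), the set $\sD(\phi)$ of regular injective curves is open in $\CL$. Hence (E1) and \thref{Lemma_Log_Strain_is_Fréchet} make $\phi$ Fréchet differentiable on $\sD(\phi)$, so \thref{Proposition_Metric_Quantities_in_Banach} yields $|\partial\phi|(\g)=\|D\phi[\g]\|_{\CL^*}$ and a standard chain-rule argument (e.g. \cite[Lemma~2.3.7]{Matt.2022}) turns it into a strong upper gradient, verifying ($\Phi$5). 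Assumption (M) follows from reflexivity of $\CL$ and weak lower semicontinuity of the norm, ($\Phi$2) is immediate from $\phi\ge 0$, and ($\Phi$3) holds because bounded sets in a reflexive space are relatively weakly sequentially compact.

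The main work lies in ($\Phi$1) and ($\Phi$4). Given a sequence $\g_k\xrightharpoonup{\sigma}\g$ in $\CL$ with $\sup_k\|\g_k\|_{\CL}<\infty$ and $\sup_k\phi(\g_k)<\infty$, the compact embedding upgrades the convergence to strong convergence in $W^{1+s,\rho}$, hence in $C^1$. The decisive step is to show that the limit $\g$ still lies in $\sD(\phi)$: the uniform bound on $\|\Sigma(\g_k)\|_{\AL}^{\kappa}$ together with $\AL=W^{s,\rho}(\R/\Z)\hookrightarrow C^0(\R/\Z)$ yields uniform two-sided pointwise bounds on $|\g_k'|$ that survive $C^1$-convergence and give regularity of $\g$, while (E3) furnishes a uniform lower bound $\BiLip(\g_k)\ge C>0$ passing to the limit by $C^1$-convergence, giving injectivity of $\g$. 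Once $\g\in\sD(\phi)$ is secured, (E2) gives $\EL(\g)\le\liminf_k\EL(\g_k)$, and the continuity of $\Sigma$ from \thref{Lemma_Log_Strain_is_Fréchet} combined with strong convergence in $W^{1+s,\rho}$ produces $\|\Sigma(\g_k)\|_{\AL}^{\kappa}\to\|\Sigma(\g)\|_{\AL}^{\kappa}$; adding the two contributions establishes ($\Phi$1). For ($\Phi$4), the same reduction together with the $C^1$-regularity of $\EL$ on $W^{1+s,\rho}_{\ir}$ and of the logarithmic strain penalty imply $D\phi[\g_k]\to D\phi[\g]$ strongly in $(W^{1+s,\rho})^*\hookrightarrow\CL^*$, so that $|\partial\phi|=\|D\phi[\cdot]\|_{\CL^*}$ is in fact continuous along such sequences.

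With (M) and ($\Phi$1)--($\Phi$5) in hand, \thref{Theorem_Existence_Metric_COMS} delivers the desired $\bm u\in AC^{\theta}([0,\infty),\CL)$ with $\bm u(0)=\g_0$. Since $\CL$ is reflexive it has the Radon-Nikodym property, so \thref{Proposition_COMS_are_GFs}\,(i) yields the differential inclusion \eqref{eq:diff-inclusion}. Under the additional assumptions of strict convexity of $\CL$ and G\^ateaux-differentiability of its norm away from the origin, \thref{Proposition_COMS_are_GFs}\,(ii) then upgrades this to the explicit gradient flow identity \eqref{EQN_h_8}. I expect the main obstacle to be the verification that the weak limit remains in $\sD(\phi)$; this is exactly the point at which the logarithmic strain penalty (to secure regularity) and (E3) (to secure injectivity) pull their weight, and it explains why neither of these ingredients can be dropped from the construction.
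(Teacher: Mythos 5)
Your proposal is correct and follows essentially the same route as the paper: verify (M) and ($\Phi$1)--($\Phi$5) for $(\CL,\|\cdot\|_{\CL})$ with its weak topology — using the logarithmic strain bound for two-sided speed bounds and (E3) for a uniform bi-Lipschitz constant to keep weak limits in $\sD(\phi)$, and the compact embedding to turn weak convergence into strong $W^{1+s,\rho}$-convergence for ($\Phi$1) and ($\Phi$4) — and then invoke \thref{Theorem_Existence_Metric_COMS} followed by \thref{Proposition_COMS_are_GFs}. The only cosmetic difference is that you obtain convergence of the penalty term $\|\Sigma(\g_k)\|_{\AL}^{\kappa}$ from continuity of $\Sigma$ under the strong convergence supplied by compactness, whereas the paper settles for lower semicontinuity via pointwise convergence and Fatou's lemma; both are valid.
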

\begin{proof}
We proceed as follows. In the first four steps we verify
Assumptions (M), and ($\Phi$1)--($\Phi$5) of Section \ref{sec:2.1}
to obtain by virtue of the metric existence result, 
\thref{Theorem_Existence_Metric_COMS},
a curve of maximal slope for the total energy
$\phi$ with respect to its
local slope $|\partial\phi|$. Then in a final step we apply the more
specific results for Banach spaces presented  in Section \ref{sec:2.2}
to obtain the differential inclusion \eqref{eq:diff-inclusion}
and finally the gradient flow equation \eqref{EQN_h_8}.

{\bf Step 1: Assumptions (M), ($\Phi$2), ($\Phi$3) are satisfied.}\,
The Sobolev-Slobodecki\v{\i} space $W^{1+s,\rho}(\R/\Z,\R^n)$ as well
as the Banach space $\CL$ are complete metric spaces with the metric
induced by their respective norms. It follows from general Banach space
theory (see, e.g., \cite[Propositions 3.3 \& 3.5]{Brezis.2011})
that these norms are weakly lower semi-continuous, and that
the weak topology is Hausdorff, so that
Assumption (M) for the complete metric space
$\sS:=\CL$  
is satisfied (as well as for the larger space 
$W^{1+s,\rho}(\R/\Z,\R^n)$).
The total energy $\phi$ is coercive since $\EL$ and the norm of
$\Sigma$ are non-negative, which verifies Assumption ($\Phi$2).
Moreover, $W^{1+s,\rho}$ for $\rho>1$ is reflexive by
\thref{lem:A.1},
and $\CL$ is reflexive by assumption. So, the sequential
weak compactness of bounded sets follows from general theory again;
see, e.g., \cite[Theorem 3.17]{Brezis.2011}. In particular,
Assumption ($\Phi3$) holds here.

{\bf Step 2: Assumption  ($\Phi$1) holds true.}\,
To prove this claim we  first show that a sequence of bounded total
energy cannot have weak cluster points outside of the set of 
injective and regular curves. 
This way, the energy may not jump to infinity.
Afterwards, it is straight-forward to prove weak lower 
semi-continuity on $W^{1+s,\rho}_\ir(\R/\Z, \R^n)$.
        
Let $(\gamma_k)_{k\in\N} \subset W^{1+s,\rho}( \R/\Z,\R^n )$ be a 
sequence that converges weakly to a curve
$\gamma \in W^{1+s,\rho}(\R/\Z,\R^n)$
as $k\to\infty$,
and up to a further subsequence we may also assume
that
\begin{equation}\label{eq:28A}
\liminf_{k\to\infty} \phi(\gamma_k)<\infty,\quad\sup_{k\in\N}\phi(\g_k)<\infty,\quad\textnormal{and}\quad
\g_k\to\g \quad\textnormal{in $C^1$}, 
\end{equation}
where we used Arzela-Ascoli's theorem  in combination
with the embedding result, \thref{prop:embedding} (ii) 
in the appendix
for $k_1:=1$, $k_2:=1$,  $\rho_1:=\rho$, and any $\mu\in (0,s-\frac1{\rho})$, to obtain the $C^1$-convergence.
The finiteness of $\phi(\gamma_k)$ implies that 
$\gamma_k\in W^{1+s,\rho}_{\ir}( \R/\Z,\R^n )$ for all $k\in\N$ by
the very definition \eqref{eq:total-energy} of the total energy 
$\phi$, and further that
$
 \sup_{k\in\N} \Norm{\Sigma(\gamma_k)}_{W^{s,\rho}} <\infty.$
Because of the embedding $W^{s,\rho} \hookrightarrow C^0$ (see 
part (ii) of \thref{prop:embedding} for $k_1:=0,$ 
$\rho_1:=
\rho$, $k_2:=0$ and some $\mu\in (0,s-\frac1{\rho})$), 
we infer that $
\sup_{k\in\N} \sup_{x\in \R/\Z} |\log|\g_k'(x)|| <\infty$.
Consequently,  there exists a constant $c_1$ such that
\begin{equation}\label{EQN_1_2}
c_1^{-1}
  	\le |\g_k'(x)|\le c_1\quad\Foa x\in\R/\Z,
	\end{equation}
	which together with 
	the uniform total energy bound \eqref{eq:28A},
	in particular the resulting uniform 
	energy bound $c_2:=\sup_{k\in\N}\EL(\g_k)<\infty$,
	yields the uniform bound $C=C(\EL,c_1,c_2)$
	on the bi-Lipschitz constant
	by means of Assumption (E3):
$
  \inf_{k\in\N} \BiLip(\gamma_k) \ge C >0$.
By means of \eqref{EQN_1_2} and the convergence in 
\eqref{eq:28A}
we obtain that $\g$ is regular, and
for distinct parameters $x,y\in\R/\Z$
$$
|\g(x)-\g(y)|\overset{\eqref{eq:28A}}{=}
\lim_{k\to\infty}|\g_k(x)-\g_k(y)|\ge  
\inf_{k\in\N} \BiLip(\gamma_k)|x-y|_{\R/\Z}>0,
$$
so that $\g$ is injective, and therefore $\g\in W^{1+s,\rho}_{\ir}
(\R/\Z,\R^n)$. 

By Assumption (E2)  the energy $\EL$ is sequentially weakly
lower semi-continuous on $W^{1+s,\rho}_{\ir}(\R/\Z,\R^n)$. It
remains to prove the same for the regularising logarithmic strain
term, which also appears in the total energy $\phi$; cf.
\eqref{eq:total-energy}. Indeed, writing it out for $\g_k$,
\begin{align*}
\Norm{\Sigma(\gamma_k)}_{W^{s,\rho}}^{\kappa} = 
\textstyle
\paren[\Big]{\int_{\R/\Z} |\Sigma(\gamma_k)(x)|^{\rho} \;\dif x + \int_{\R/\Z} 
\int_{\R/\Z} \frac{|\Sigma(\gamma_k)(x)-\Sigma(\gamma_k)(y)|^\rho}{|x-y|^{1+s\rho}} 
\;\dif x \dif  y}^{\frac{\kappa}{\rho}},
		\end{align*}
		 we can use the $C^1$-convergence
		 in \eqref{eq:28A} to find pointwise
		convergence of  $\Sigma(\gamma_k)$  to $\Sigma(\gamma)$
		as $k\to\infty$, in order to apply 
		Fatou's Lemma to obtain
$
\liminf_{k\to\infty} \Norm{\Sigma(\gamma_k)}_{W^{s,\rho}}^{\kappa} 
\ge \Norm{\Sigma(\gamma)}_{W^{s,\rho}}^{\kappa}.
$
Since $\CL$ embeds continuously into $W^{1+s,\rho}( \R/\Z,\R^n)$, 
Assumption
($\Phi$1)  is fulfilled.

{\bf Step 3: Assumption ($\Phi$4) is satisfied.}\,
By definition of the total energy $\phi$ in \eqref{eq:total-energy} every sublevel set of $\phi$
is contained in $\CL_{\ir}$, which
according to \thref{lem:open}
in the appendix 
is an open subset of the full space $\CL$.
By \thref{Lemma_Log_Strain_is_Fréchet} and Assumption
(E1) the logarithmic strain $\Sigma$ and the energy $\EL$
are	 continuously Fr\'echet differentiable
on $W^{1+s,\rho}_{\ir}(\R/\Z,\R^n)$.
Furthermore, the choice $\kappa>1$ and 
\thref{lem:A.1} 
in the appendix for $k:=0$ imply that 
$\Norm{\cdot}_{W^{s,\rho}}^{\kappa}$ is also continuously
differentiable.
 Hence, the 
total energy $\phi$ is of class 
$C^1(W^{1+s,\rho}_{\ir}(\R/\Z,\R^n))$. 
Since the canonical embedding $\iota\colon\CL \to W^{1+s,\rho}(\R/\Z,\R^n)$ is linear and 
bounded and since $\CL_{\ir} \subset W^{1+s,\rho}_{\ir}(\R/\Z,\R^n)$, 
we infer that $\phi$ is of class $C^1(\CL_{\ir},\R)$.
It follows from \thref{Proposition_Metric_Quantities_in_Banach}
that the local slope takes the form 
$|\partial \phi|(\g) = \Norm{D\phi[\g]}_{\CL^{\ast}}$.
To be precise, denoting by $\iota^{\ast}$ the adjoint of $\iota$, 
we have 
$|\partial \phi|(\g) = \Norm{D(\phi\circ\iota)[\gamma]}_{\CL^{\ast}} = \Norm{\cdot}_{\CL^{\ast}} \circ \iota^{\ast} \circ D\phi \circ \iota[\gamma]$.
The map $\Norm{\cdot}_{\CL^{\ast}} \circ \iota^{\ast} \circ D\phi$ is continuous with respect to the norm-topology on 
 $W^{1+s,\rho}(\R/\Z,\R^n)$. 
Since $\CL$ embeds compactly, i.e.~since $\iota$ is a compact operator, effectively converting weakly 
convergent sequences in $\CL$
 into strongly convergent ones in $W^{1+s,\rho}(\R/\Z,\R^n)$, 
 we obtain the weak 
sequential continuity of $|\partial\phi|(\cdot)$ on $\CL_{\ir}$. 
In particular, it is also weakly sequentially lower 
semi-continuous, so that Assumption
($\Phi$4) is satisfied.

{\bf Step 4: Assumption  ($\Phi$5) holds.}\,
Let $\bm u \in AC(I,\CL)$ such that 
$\phi(\bm u(t))<\infty$ for all $t\in I$. 
By the previous step, the local slope $|\partial\phi|$
is weakly sequentially lower semi-continuous 
and therefore it is lower semicontinuous with respect to the 
strong topology. It follows that $|\partial\phi|\circ \bm u$ is 
lower semicontinuous and thus measurable. 
Since $\phi$ is continuously Fr{\'e}chet-differentiable
on $\CL_\ir$, it is locally Lipschitz-continuous, 
whence $\phi\circ \bm u$ is locally absolutely continuous,
and therefore differentiable a.e.~on $I$. Furthermore, since the
reflexive space $\CL$ satisfies the Radon-Nikodym property;
see \cite[Corollary 1.2.7]{Arendt.2011} $\bm{u}$ is also differentiable a.e.~on $I$.
Therefore, by the chain rule  and 
\thref{Proposition_Metric_Quantities_in_Banach}
\begin{align*}
\abs{\phi\circ \bm u(t)  & \textstyle
- \phi\circ\bm u (s)}
= 
\abs[\big]{\int_s^t \frac{\dif}{\dif r} \phi\circ \bm u(r) \;\dif r}
= \abs[\big]{\int_s^t \langle D\phi[\bm u(r)],\bm u'(r)
\rangle_{\CL^{\ast}\times \CL}\;\dif r}\\
&\textstyle
\le \int_s^t \Norm{D\phi[\bm u(r)]}_{\CL^{\ast}} \Norm{\bm u'(r)}_{
\CL} \;\dif r 
= \int_s^t |\partial \phi|(\bm u(r)) |\bm u'|(r) \;\dif r
		\end{align*}
for all $s,t\in I$ with $s<t$. This is the defining inequality
\eqref{EQN_h_1} for the strong upper gradient $g:=|\partial\phi|$.
Thus, Assumption ($\Phi$5) is verified.
	
{\bf Step 5: Proof of \eqref{eq:diff-inclusion} and
\eqref{EQN_h_8}.}\,
The previous steps have established the validity of Assumptions
(M), and ($\Phi$1)--($\Phi$5) so that \thref{Theorem_Existence_Metric_COMS} for the choice $\sS:=\CL$ 
yields a curve of
maximal slope for the total energy $\phi$ with 
respect to its local slope $|\partial\phi|$ starting
at $\g_0\in\sD(\phi)$. Since $\CL$ is reflexive it has the
Radon-Nikodym property \cite[Corollary 1.2.7]{Arendt.2011},
part (i) of \thref{Proposition_COMS_are_GFs}
implies the validity of the differential inclusion
\eqref{eq:diff-inclusion}. Under the stronger assumption that
$\CL$ is strictly convex and has a 
G{\^a}teaux-differentiable norm on $\CL\setminus\{0\}$, 
part (ii) of the same proposition yields the
gradient flow equation \eqref{EQN_h_8}.
\end{proof}

\begin{proposition}\label{prop:kappa1}
	Let $s\in (0,1)$, $\rho\in (\frac1s,\infty)$, $\theta \in(1,\infty)$, and $\kappa=1$. Suppose that $\CL\subset
	W^{1+s,\rho}(\R/\Z,\R^n)$ is a compactly embedded reflexive
	Banach space,  that the energy $\EL$ satisfies Assumptions
	{\rm (E1)--(E3)}, and $\g_0\in \sD(\phi)$, where
	the total energy $\phi$ is defined as in \eqref{eq:total-energy}.
	Then there exists a $\theta$-curve of maximal slope $\bm u \in 
	AC^{\theta}([0,\infty),(\CL,\Norm{\cdot}_{\CL}) )$ for $\phi$ with respect to its strong upper gradient $|\partial \phi|$ 
	starting at $\gamma_0$. 
	Furthermore, the local slope at $\g\in\CL$ takes the form
	\begin{align} \label{eq:local_slope_kappa=1}
				|\partial \phi|(\g) = \sup_{\substack{v\in\CL,\Norm{v}_{\CL}=1}} 
					\begin{cases}
						 \big(\langle D\EL[\g],v\rangle_{\CL^{\ast}\times \CL} - \Norm{D\Sigma[\g]v}_{\CA} \big)^{+}, \quad &\Sigma(\g)=0,\\[10pt]
						 \langle D\EL[\g],v \rangle_{\CL^{\ast}\times \CL} + \big\langle \frac{\FJ_{\CA,2}(\Sigma(\g))}{\Norm{\Sigma(\g)}_{\CA}},D\Sigma[\g]v\big\rangle_{\CA^{\ast}\times \CA}, \quad &\Sigma(\g) \ne 0,
					\end{cases}
	\end{align}
where $\CA:=W^{s,\rho}(\R/\Z)$ 
\end{proposition}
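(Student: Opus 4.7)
My plan is to adapt the strategy of \thref{theorem:existenceCOMS} to the non-smooth case $\kappa=1$ by verifying Assumptions (M) and ($\Phi$1)--($\Phi$5) for $\phi$ on $\CL$ directly and then invoking \thref{Theorem_Existence_Metric_COMS}. The essential new obstacle is that $\phi$ fails to be Fr{\'e}chet differentiable at curves $\g\in\CL_{\ir}$ with $\Sigma(\g)=0$, since the norm $\|\cdot\|_{\CA}$ is not smooth at the origin. In particular, \thref{Proposition_Metric_Quantities_in_Banach} no longer delivers the local slope globally, and the upper-gradient property ($\Phi$5) has to be established by hand across this non-smooth stratum. The verifications of (M), ($\Phi$1)--($\Phi$3) are unchanged from Steps~1 and~2 of the proof of \thref{theorem:existenceCOMS}, since they rely only on weak lower semicontinuity (via Fatou's lemma) and compactness, both of which remain valid for $\kappa=1$.

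For the local slope formula \eqref{eq:local_slope_kappa=1}, I would distinguish two cases. If $\Sigma(\g)\neq 0$, then $\CA = W^{s,\rho}(\R/\Z)$ is uniformly smooth (as $\rho\in(1,\infty)$), so $\|\cdot\|_{\CA}$ is Fr{\'e}chet differentiable at $\Sigma(\g)$ with derivative $\FJ_{\CA,2}(\Sigma(\g))/\|\Sigma(\g)\|_{\CA}$; combined with (E1) and \thref{Lemma_Log_Strain_is_Fréchet} this makes $\phi$ of class $C^1$ near $\g$, so \thref{Proposition_Metric_Quantities_in_Banach} yields the second branch of \eqref{eq:local_slope_kappa=1}. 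If $\Sigma(\g)=0$, the Fr{\'e}chet expansions of $\EL$ and $\Sigma$ together with continuity of $\|\cdot\|_\CA$ give
\[
\phi(u)-\phi(\g) = \langle D\EL[\g], u-\g\rangle_{\CL^{\ast}\times\CL} + \|D\Sigma[\g](u-\g)\|_{\CA} + o(\|u-\g\|_{\CL});
\]
dividing by $\|u-\g\|_{\CL}$, taking $\limsup$, and optimising over unit vectors $v\in\CL$ (exploiting the symmetry $v\leftrightarrow -v$) produces the first branch.

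For ($\Phi$4), the compact embedding $\CL\hookrightarrow W^{1+s,\rho}$ converts weak convergence in $\CL$ (with bounded energy) into strong convergence in $W^{1+s,\rho}$. By (E1) and \thref{Lemma_Log_Strain_is_Fréchet} the derivatives $D\EL[\g_k]$ and $D\Sigma[\g_k]$ converge in the respective operator norms, and so do the duality units $\FJ_{\CA,2}(\Sigma(\g_k))/\|\Sigma(\g_k)\|_{\CA}$ whenever $\Sigma(\g)\neq 0$ (by norm-continuity of the duality map away from the origin in uniformly convex spaces). The uniform lower bound
\[
|\partial\phi|(\g') \ge \sup_{\|v\|_{\CL}=1}\bigl(\langle D\EL[\g'], v\rangle_{\CL^{\ast}\times\CL} - \|D\Sigma[\g']v\|_{\CA}\bigr)^+,
\]
which follows from $\|\FJ_{\CA,2}(z)\|_{\CA^{\ast}}=\|z\|_{\CA}$ in the smooth branch and from equality in the non-smooth branch, is continuous along such sequences; combined with continuity of the smooth branch when $\Sigma(\g)\neq 0$, this yields the desired weak sequential lower semicontinuity of $|\partial\phi|$.

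The main obstacle is ($\Phi$5). Since $\phi$ is locally Lipschitz on $\CL_{\ir}$ (by $C^1$-ness of $\EL$ and the $1$-Lipschitz estimate for $\|\cdot\|_{\CA}\circ \Sigma$), the composition $\phi\circ\bm u$ is locally absolutely continuous for any $\bm u\in AC(I,\CL)$ with values in $\sD(\phi)$. On the open set $\{r:\Sigma(\bm u(r))\neq 0\}$ the standard chain rule yields the upper-gradient bound directly. On the closed set $S:=\{r:\Sigma(\bm u(r))=0\}$, the nonnegative locally absolutely continuous function $r\mapsto \|\Sigma(\bm u(r))\|_{\CA}$ vanishes identically, so its classical derivative vanishes a.e.\ on $S$; a short Taylor argument
\[
\|\Sigma(\bm u(r+h))\|_{\CA} = |h|\,\|D\Sigma[\bm u(r)]\bm u'(r)\|_{\CA} + o(h)
\]
then forces $\|D\Sigma[\bm u(r)]\bm u'(r)\|_{\CA}=0$ for a.e.\ $r\in S$. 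Evaluating the first branch of \eqref{eq:local_slope_kappa=1} at $v=\pm \bm u'(r)/\|\bm u'(r)\|_{\CL}$ gives
\[
|\partial\phi|(\bm u(r)) \ge |\langle D\EL[\bm u(r)], \bm u'(r)\rangle_{\CL^{\ast}\times\CL}|/\|\bm u'(r)\|_{\CL}
\]
for a.e.\ $r\in S$, and since $(\phi\circ\bm u)'=(\EL\circ\bm u)'$ a.e.\ on $S$, the upper-gradient inequality $|(\phi\circ\bm u)'(r)| \le |\partial\phi|(\bm u(r))\,|\bm u'|(r)$ holds almost everywhere. Integrating this bound and applying \thref{Theorem_Existence_Metric_COMS} delivers the desired $\theta$-curve of maximal slope.
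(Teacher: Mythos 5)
Your proposal is correct and follows the paper's structure almost everywhere: the verification of (M) and ($\Phi$1)--($\Phi$3) by reduction to Steps 1--2 of \thref{theorem:existenceCOMS}, the two-case derivation of \eqref{eq:local_slope_kappa=1} (smooth branch via \thref{Proposition_Metric_Quantities_in_Banach}, non-smooth branch via the Taylor expansion of $\EL$ and $\Sigma$ at a zero of $\Sigma$ followed by optimisation over unit vectors), and the proof of ($\Phi$4) via the compact embedding, the continuity of $D\EL$, $D\Sigma$, $\Sigma$, and the key inequality $\bigl(\langle D\EL[\g_n],v\rangle_{\CL^{\ast}\times\CL}-\Norm[\CA]{D\Sigma[\g_n]v}\bigr)^{+}\le|\partial\phi|(\g_n)$ valid in both branches are all essentially the paper's argument. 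The one place where you genuinely diverge is ($\Phi$5): the paper disposes of it in two lines by observing that $\phi$ is locally Lipschitz on $\CL_{\ir}$, so $\phi\circ\bm u$ is absolutely continuous, and then invoking the general fact from \cite[Theorem 1.2.5, Definitions 1.2.1--1.2.2]{AGS08} that the local slope is a weak upper gradient which, under this absolute continuity, upgrades to a strong one. Your direct verification --- splitting $I$ into $\{\Sigma(\bm u(\cdot))\ne 0\}$ and $S=\{\Sigma(\bm u(\cdot))=0\}$, using the chain rule on the former, and on $S$ combining the a.e.\ vanishing of $\frac{\dif}{\dif r}\Norm[\CA]{\Sigma(\bm u(r))}$ with the expansion $\Norm[\CA]{\Sigma(\bm u(r+h))}=|h|\,\Norm[\CA]{D\Sigma[\bm u(r)]\bm u'(r)}+o(h)$ to force $\Norm[\CA]{D\Sigma[\bm u(r)]\bm u'(r)}=0$ and then testing the first branch of \eqref{eq:local_slope_kappa=1} with $v=\pm\bm u'(r)/\Norm[\CL]{\bm u'(r)}$ --- is valid and arguably more transparent, since it in effect re-proves the AGS result in this concrete setting; it buys self-containedness at the cost of length. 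Two small points to tighten: you should state explicitly that $\bm u\in AC(I,\CL)$ is differentiable a.e.\ because the reflexive space $\CL$ has the Radon--Nikodym property (this is what licenses both the chain rule off $S$ and the Taylor expansion on $S$), and the phrase ``vanishes identically'' should read ``vanishes on $S$'' --- the correct justification is that an a.e.-differentiable nonnegative function has zero derivative at a.e.\ point of its zero set.
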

\begin{proof}	
	The assumptions (M) and $(\Phi1)$-$(\Phi3)$ follow exactly as in in the first two steps 
	of  \thref{theorem:existenceCOMS}.
	
	We now show that the local slope takes the  form 
	\eqref{eq:local_slope_kappa=1}. If $\Sigma(\g)\ne 0$, then $\phi = \EL + \Norm{\Sigma}_{\CA}$ is differentiable at $\g$ and we can apply \thref{Proposition_Metric_Quantities_in_Banach}. Notice that $D\Norm{\cdot}_{\CA}[\zeta] = \frac{\FJ_{\CA,2}(\zeta)}{\Norm{\zeta}_{\CA}}$ for every $\zeta\in \CA\setminus \{0\}$. Let us now assume that $\Sigma(\g)=0$. By differentiability, we have
	\begin{align*}
		&\phi(\eta) = \EL(\eta) + \Norm{\Sigma(\eta)}_{\CA}\notag\\
		& = \EL(\g) + \langle D\EL[\g],\eta-\g\rangle_{\CL^{\ast}\times \CL} + \Norm{\Sigma(\g) + D\Sigma[\g](\eta-\g)}_{\CA} 
		+ o\big( \Norm{\g-\eta}_{\CL} \big)\notag\\
		&= \phi(\g) + \langle D\EL[\g],\eta-\g\rangle_{\CL^{\ast}\times \CL} + \Norm{D\Sigma[\g](\eta-\g)}_{\CA} + o\big( \Norm{\g-\eta}_{\CL} \big)
	\end{align*}
as $\|\g-\eta\|_\CL \to 0$,	and therefore, 
	\begin{align*}
    \textstyle
	\frac{\big( \phi(\g) - \phi(\eta) \big)^{+}}{\Norm{\g-\eta}_{\CL}} &= \paren[\big]{\textstyle-\langle D\EL[\g],\frac{\eta-\g}{\Norm{\eta-\g}_{\CL}}\rangle_{\CL^{\ast}\times \CL} - \big\lVert D\Sigma[\g]\paren[\big]{\frac{\eta-\g}{\Norm{\eta-\g}_{\CL}}}\big\rVert_{\CA}}^{+} + o(1) \notag\\
    \textstyle
	&\le \sup_{\substack{v\in\CL,\Norm{v}_{\CL}=1}} \big(  \langle D\EL[\g],-v\rangle_{\CL^{\ast}\times \CL} - \Norm{D\Sigma[\g]v}_{\CA}  \big)^{+} + o(1)
	\end{align*}
    as $\|\g-\eta\|_\CL \to 0$.
	Choosing a maximising sequence $(v_n)_{n\in\N} \subset \CL$ 
	for the supremum and setting $\eta_n:= \g + \frac{v_n}{n}$, we infer that
	\begin{align*}
	\limsup_{\eta\to \g} \textstyle \frac{( \phi(\g) - \phi(\eta) )^{+}}{\Norm{\g-\eta}_{\CL}} &\ge \limsup_{n\to\infty} \big(\langle D\EL[\g],-v_n\rangle_{\CL^{\ast}\times \CL} - \Norm{D\Sigma[\g]v_n}_{\CA}  \big)^{+}\\
	&= \sup_{\substack{v\in\CL,\Norm{v}_{\CL}=1}} \big(  \langle D\EL[\g],-v\rangle_{\CL^{\ast}\times \CL} - \Norm{D\Sigma[\g]v}_{\CA}  \big)^{+}.
	\end{align*}
	Changing $v$ to $-v$ completes the representation formula 
	\eqref{eq:local_slope_kappa=1} for the local slope.
	
	Next, we prove that the local slope is sequentially weakly lower semicontinuous and thus satisfies Assumption $(\Phi4)$. Let $(\g_n)_{n\in\N} \subset \CL$ 
	that converges weakly to $\g\in\CL$. 
	Then, by compact embedding, $\g_n$ converges strongly 
	to $\g$ in $\mathcal{W}:=W^{1+s,\rho}(\R/\Z,\R^n)$. By Assumption (E1) and \thref{Lemma_Log_Strain_is_Fréchet} we have
	\begin{align} \label{eq:h:6}
	\lim_{n\to\infty} \paren[\big]{\Norm{\Sigma(\g_n) -\Sigma(\g)}_{\CA}  + \Norm{D\Sigma[\g_n]-D\Sigma[\g]}_{\LL(\WL,\CA)} + \Norm{D\EL[\g_n]-D\EL[\g]}_{\WL^{\ast}}}= 0.
	\end{align}
	We begin with the case where $\Sigma(\g)=0$. Let $v\in \CL$ with $\Norm{v}_{\CL}=1$. We have by \eqref{eq:h:6}
	\begin{align}
		\big(  \langle D\EL[\g],v\rangle_{\CL^{\ast}\times \CL} - \Norm{D\Sigma[\g]v}_{\CA}  \big)^{+} = \lim_{n\to\infty} \big(  \langle D\EL[\g_n],v\rangle_{\CL^{\ast}\times \CL} - \Norm{D\Sigma[\g_n]v}_{\CA}  \big)^{+}. \label{eq:h:1}
	\end{align}
	If $\Sigma(\g_n)=0$, it follows from \eqref{eq:local_slope_kappa=1} that
	\begin{align}
		\big(  \langle D\EL[\g_n],v\rangle_{\CL^{\ast}\times \CL} - \Norm{D\Sigma[\g_n]v}_{\CA}  \big)^{+} \le |\partial\phi|(\g_n). \label{eq:h:2}
	\end{align}
	If $\Sigma(\g_n)\ne 0$, we use the identity $\Norm{\FJ_{\CA,2}(\zeta)}_{\CA^{\ast}} = \Norm{\zeta}_{\CA}$ for all $\zeta\in \CA$ to 
	compute
	\begin{align}
		&\big(\langle D\EL[\g_n],v\rangle_{\CL^{\ast}\times \CL} -\Norm{D\Sigma[\g_n]v}_{\CA} \big)^{+}\nonumber\\
		&= 
		\textstyle
		\big(\langle D\EL[\g_n],v\rangle_{\CL^{\ast}\times \CL} - \frac{\Norm{\FJ_{\CA,2}(\Sigma(\g_n))}_{\CA^{\ast}}}{\Norm{\Sigma(\g_n)}_{\CA}}\Norm{D\Sigma[\g_n]v}_{\CA}\big)^{+}\nonumber \\
					&
					\textstyle
					\le \big(\langle D\EL[\g_n],v\rangle_{\CL^{\ast}\times \CL} + \langle \frac{\FJ_{\CA,2}(\Sigma(\g_n))}{\Norm{\Sigma(\g_n)}_{\CA}}, D\Sigma[\g_n]v\rangle_{\CA^{\ast}\times\CA}\big)^{+}
					\stackrelx{\smash{\eqref{eq:local_slope_kappa=1}}}{\le} |\partial \phi|(\g_n). \label{eq:h:3}
	\end{align}
	Combining \eqref{eq:h:1}, \eqref{eq:h:2} and \eqref{eq:h:3}, we obtain
	the inequality
$
        \textstyle
		(\langle D\EL[\g],v\rangle_{\CL^{\ast}\times \CL} - \Norm{D\Sigma[\g]v}_{\CA})^{+} \\ \le \liminf_{n\to\infty} |\partial\phi|(\g_n).
	$
	Taking the supremum over all $v\in \CL$ with $\Norm{v}_{\CL}=1$ on the left-hand side, we infer the lower semicontinuity.
	The case where $\Sigma(\g)\ne 0$ follows directly from the continuity properties \eqref{eq:h:6}, since $\Sigma(\g_n) \ne 0$ for all sufficiently large $n\in\N$. This proves that Assumption $(\Phi4)$ is satisfied.
	
Finally, we show that the local slope is a strong upper gradient, i.e.,
we verify Assumption~($\Phi \mathrm{5}$). 
Since $\EL$ and $\Sigma$ are continuously Fr{\'e}chet-differentiable, 
they are locally Lipschitz-continuous and so is $\phi$. 
In particular, $\phi\circ \bm{u}$ is absolutely continuous for every 
$\bm{u} \in AC\left( [a,b],\CL \right)$. 
It follows from \cite[Theorem 1.2.5]{AGS08},
\cite[Definition 1.2.2]{AGS08}, and \cite[Definition 1.2.1]{AGS08}
that $|\partial \phi|$ is a strong upper gradient for $\phi$. Therefore, Assumption $(\Phi5)$ is satisfied.
	The existence of a curve of maximal slope now follows as in step 5 of \thref{theorem:existenceCOMS}.
\end{proof}

\section{Gradient flows for various knot energies}
\label{sec:4}
To verify Assumptions (E1)--(E3) of \thref{theorem:existenceCOMS}  for the three energy families $E^{\alpha,p}$, $\intM^{(p,q)}$, and
$\TP^{(p,q)}$, the following two general results turn out 
to be useful. The first one yields sequential lower semi-continuity
for a multiple integral functional,
whereas the second guarantees uniform
control over the bi-Lipschitz constants.

\begin{lemma}
    \label{lemma:WeakSubcontinuity}
    Let $N \in \N $, $\rho \in (1, \infty)$, $s\in (\frac1{\rho},1)$,
     and
    \[
    \textstyle
\EL: W^{1+s, \rho}_{\ir}(\R/\Z, \R^n) \to [0, \infty], \,\,
\g \mapsto \idotsint_{(\R/\Z)^N} e(\g; x_1, \ldots, x_N) \d x_N \ldots \d x_1.
    \]
   Suppose  the integrand
    $e: W^{1+s, \rho}_{\ir}(\R/\Z, \R^n) \times (\R/\Z)^N \to [0, \infty]$ 
    satisfies
    \begin{enumerate}
        \item[\rm (e1)] $e(\g; x_1, \ldots, x_N) < \infty$ and
        \item[\rm (e2)] $C^1_{\ir}(\R/\Z, \R^n) \to [0, \infty), \,\,
	\g \mapsto e(\g; x_1, \ldots, x_N)$ is continuous
    \end{enumerate}
    for almost all $x_1, \ldots, x_N \in \R/\Z$.
    Then, the energy  $\EL$ satisfies Assumption {\rm (E2).}
\end{lemma}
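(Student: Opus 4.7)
The plan is to deduce weak lower semi-continuity of $\EL$ from pointwise lower semi-continuity of the integrand together with Fatou's lemma, using compactness in $C^1$ as the bridge between weak convergence in $W^{1+s,\rho}$ and the pointwise hypothesis (e2).

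First I would take an arbitrary weakly convergent sequence $\g_k \rightharpoonup \g$ in $W^{1+s,\rho}_\ir(\R/\Z,\R^n)$ and reduce to the case $\liminf_{k\to\infty}\EL(\g_k)<\infty$ by passing to a subsequence that realises the liminf (otherwise the claimed inequality is trivial). Relabelling, I may assume $\sup_k\EL(\g_k)<\infty$.

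The key passage from weak convergence to pointwise convergence of the integrand goes via the embedding result \thref{prop:embedding}(ii) applied with $k_1:=1$, $k_2:=1$, $\rho_1:=\rho$, and any $\mu\in(0,s-\tfrac{1}{\rho})$: since $s>\tfrac{1}{\rho}$, the inclusion $W^{1+s,\rho}\hookrightarrow C^{1,\mu}$ is compact, so from $\g_k\rightharpoonup\g$ I extract $\g_k\to\g$ in $C^1(\R/\Z,\R^n)$. Since $\g\in W^{1+s,\rho}_\ir$ by assumption, $\g\in C^1_\ir(\R/\Z,\R^n)$, and by (e2) this yields, for almost every $(x_1,\ldots,x_N)\in(\R/\Z)^N$, the pointwise convergence
\[
\textstyle
e(\g_k;x_1,\ldots,x_N)\xrightarrow[k\to\infty]{} e(\g;x_1,\ldots,x_N).
\]
Here (e1) ensures the limit is a.e.\ finite, but since $e\ge 0$ we only need the inequality form.

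Finally, I apply Fatou's lemma on $(\R/\Z)^N$ to the non-negative sequence $e(\g_k;\cdot)$:
\[
\textstyle
\EL(\g)=\idotsint_{(\R/\Z)^N}\!\! e(\g;x_1,\ldots,x_N)\,\dif x_N\cdots\dif x_1
\le\liminf_{k\to\infty}\idotsint_{(\R/\Z)^N}\!\! e(\g_k;x_1,\ldots,x_N)\,\dif x_N\cdots\dif x_1=\liminf_{k\to\infty}\EL(\g_k),
\]
which is exactly Assumption (E2). Since the argument works along any subsequence achieving the liminf of the original sequence, the conclusion holds for the full sequence.

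The main (and only mild) obstacle is justifying the strong $C^1$-convergence from weak $W^{1+s,\rho}$-convergence; once the embedding result is invoked, the rest is pointwise continuity plus Fatou. There is no subtle issue about the limit $\g$ losing regularity or injectivity, because the statement of (E2) already assumes $\g\in W^{1+s,\rho}_\ir$, and (e2) is formulated on $C^1_\ir$, into which both $\g_k$ and $\g$ embed.
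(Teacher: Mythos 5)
Your proposal is correct and follows essentially the same route as the paper: pass to a subsequence realising the liminf, use the compact embedding $W^{1+s,\rho}\hookrightarrow C^{1,\mu}$ (valid since $s>\tfrac1\rho$) to upgrade weak convergence to strong $C^1$-convergence along a further subsequence, invoke (e2) for a.e.\ pointwise convergence of the non-negative integrands, and conclude with Fatou's lemma. The final remark that the bound transfers back to the full original sequence is a correct (and slightly more explicit) justification of the paper's ``up to taking subsequences'' step.
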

\begin{proof}
Up to taking subsequences we may assume that $\g_k
\rightharpoonup\g$ in 
$W^{1+s,\rho}(\R/\Z,\R^n)$ and $\g_k\to\g$ in 
$C^1(\R/\Z,\R^n)$
as $k\to\infty$ (by the embedding result, \thref{prop:embedding}), and
that 
$
\lim_{k\to\infty}\EL(\g_k)=\liminf_{k\to\infty}\EL(\g_k)<\infty$.
The strong convergence in $C^1$ implies by virtue of
assumption (e2) the pointwise convergence
$
	         e(\g_k; x_1, \ldots, x_N) \to e(\g; x_1, \ldots, x_N) 
             $
 as $k\to\infty$ for a.e.~$x_1, \ldots, x_N \in \R/\Z$. 
    Now apply Fatou's lemma to conclude.
\end{proof}

\begin{lemma}
    \label{lemma:assumption2Arclength}
    Let $\EL: W^{1+s,\rho}(\R/\Z, \R^n) \to [0, \infty]$ be invariant 
    under reparametrisation and positively $d$-homogeneous
    for some $d\in\R$, and suppose
    that the following holds: 
    For every $M < \infty$, there is a constant $C=C(M,d,s,\rho) 
    > 0$ 
    such that for all $\g \in C^1_{\rm ia}(\R/\Z, \R^n)$ with 
    $\EL(\g) \le M$ one has $\BiLip\g\ge C$.
    Then, $\EL$ satisfies Assumption {\rm (E3).}
\end{lemma}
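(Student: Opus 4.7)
The plan is to reduce the general case to the arc-length parametrised case by a rescaling/reparametrisation argument exploiting the assumed positive $d$-homogeneity and reparametrisation invariance of $\EL$. Given $\g\in C^1_{\rm i}(\R/\Z,\R^n)$ with $c_1^{-1}\le|\g'|\le c_1$ and $\EL(\g)\le c_2$, its total length $L=\int_0^1|\g'|$ satisfies $L\in[c_1^{-1},c_1]$. The arc-length function $\phi(t):=\int_0^t|\g'|$ is a $C^1$-diffeomorphism from $[0,1]$ onto $[0,L]$, so that $\psi(t):=\phi^{-1}(Lt)$ defines an orientation-preserving $C^1$-diffeomorphism $\psi:\R/\Z\to\R/\Z$. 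The rescaled reparametrisation $\bar\g:=L^{-1}\,\g\circ\psi$ then has constant speed $|\bar\g'|\equiv1$, inherits injectivity from $\g$, and therefore lies in $C^1_{\rm ia}(\R/\Z,\R^n)$.

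Next I would use reparametrisation invariance and positive $d$-homogeneity to estimate
\[
\EL(\bar\g)=\EL(L^{-1}\g\circ\psi)=L^{-d}\EL(\g\circ\psi)=L^{-d}\EL(\g)\le \max(c_1^{d},c_1^{-d})\,c_2=:M.
\]
Since $M$ depends only on $c_1,c_2,d$, applying the hypothesis to $\bar\g$ yields $\BiLip(\bar\g)\ge C_0:=C(M,d,s,\rho)>0$.

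It then remains to transfer this lower bound back to $\g$. Differentiating $\psi^{-1}(s)=L^{-1}\phi(s)$ gives $(\psi^{-1})'(s)=L^{-1}|\g'(s)|\in[L^{-1}c_1^{-1},L^{-1}c_1]$, so $\psi^{-1}:\R/\Z\to\R/\Z$ is globally Lipschitz with upper Lipschitz constant $L^{-1}c_1$. Substituting $u=\psi(x)$, $v=\psi(y)$ in the quotient defining $\BiLip(\bar\g)$ and using $|\bar\g(x)-\bar\g(y)|=L^{-1}|\g(u)-\g(v)|$ together with $|x-y|_{\R/\Z}=|\psi^{-1}(u)-\psi^{-1}(v)|_{\R/\Z}\le L^{-1}c_1|u-v|_{\R/\Z}$ yields $\BiLip(\bar\g)\le c_1\,\BiLip(\g)$, hence $\BiLip(\g)\ge c_1^{-1}C_0$, a positive constant depending only on $\EL$, $c_1$, and $c_2$, as required by Assumption~{\rm (E3)}.

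The argument is largely bookkeeping and no real analytical obstacle appears. The only points requiring a little care are, first, tracking how the exponent $d$ (which may be positive, zero, or negative) feeds into the bound on $M$ via the length $L\in[c_1^{-1},c_1]$, and second, carrying out the bi-Lipschitz comparison in the correct direction so that the lower bound on $\BiLip(\bar\g)$ indeed implies one on $\BiLip(\g)$ and not the reverse.
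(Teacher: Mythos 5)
Your argument follows essentially the same route as the paper's proof: normalise the curve by rescaling with its length and passing to the arc length parametrisation, use reparametrisation invariance and positive $d$-homogeneity to bound the energy of the normalised curve by $M=\max\{c_1^d,c_1^{-d}\}c_2$, invoke the hypothesis, and transfer the bi-Lipschitz bound back (the paper delegates this last transfer to a cited lemma, whereas you do it by hand). The one flaw is in that final transfer: from the \emph{upper} bound $|\psi^{-1}(u)-\psi^{-1}(v)|_{\R/\Z}\le L^{-1}c_1|u-v|_{\R/\Z}$ that you cite, enlarging the denominator of the quotient defining $\BiLip(\bar{\g})$ gives $\BiLip(\bar{\g})\ge c_1^{-1}\BiLip(\g)$, i.e.\ an upper bound $\BiLip(\g)\le c_1\BiLip(\bar{\g})$, which is useless here — not the inequality $\BiLip(\bar{\g})\le c_1\BiLip(\g)$ you assert. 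What you actually need is the other half of the two-sided bound you already recorded, namely $(\psi^{-1})'\ge L^{-1}c_1^{-1}$: then for any distinct $u,v$, with $x=\psi^{-1}(u)$, $y=\psi^{-1}(v)$,
\begin{equation*}
\frac{|\g(u)-\g(v)|}{|u-v|_{\R/\Z}}
=\frac{L\,|\bar{\g}(x)-\bar{\g}(y)|}{|u-v|_{\R/\Z}}
\ge \frac{L\,\BiLip(\bar{\g})\,|\psi^{-1}(u)-\psi^{-1}(v)|_{\R/\Z}}{|u-v|_{\R/\Z}}
\ge c_1^{-1}\BiLip(\bar{\g}),
\end{equation*}
which yields your claimed conclusion $\BiLip(\g)\ge c_1^{-1}C_0$. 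So the proof is sound once the correct half of the Lipschitz bound is used; everything else, including the treatment of the sign of $d$ via $L\in[c_1^{-1},c_1]$, matches the paper.
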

\begin{proof}
    Let $c_1, c_2 > 0$ and $\g\in C^1_{\ir}(\R/\Z, \R^n)$ such that 
    \[
        \label{lemma:assumption2Arclength:speed}
        \numberthis
        \textstyle
       c_1^{-1} \le \abs{\g'} \le c_1\quad \textnormal{on $\R/\Z$, \quad
       and}\quad
        \EL(\g) \le c_2.
    \]
    Then consider the arc length parametrisation 
    $\arclengthParam \in W_{\ia}^{1, \infty}(\R/\Z, \R^n)$ 
    of the rescaled curve $\tilde \g := \frac {\g} {\LL(\g)}$, 
    where $\LL(\g)>0$ denotes the  length of $\g$.
    Then $
        \EL(\arclengthParam)
        = \EL(\tilde \g)
        = \LL(\g)^{-d} \EL(\g)$.
    Using \eqref{lemma:assumption2Arclength:speed}, we may estimate the 
    first factor by $\tilde{c}_1 := \max \set{c_1^d, c_1^{-d}}$, 
    allowing us to set $M := \tilde{c}_1 c_2$ in order to
    use our assumption 
    to obtain a constant $b=b(c_1, c_2,d, s, \rho) > 0$ 
    independent of the specific curve $\g$ such that
    $\BiLip\Gamma\ge b$.
    Now, 
\cite[Lemma~B.1]{KnappmannSchumacherSteenebruggeEtAl:2021:AspeedpreservingHilbertgradientflowforgeneralizedintegralMengercurvature} in
combination with \eqref{lemma:assumption2Arclength:speed} yields that
$\BiLip\tilde{\g}\ge bc_1^{-1}$,
    and consequently 
    $$
    \textstyle
    \BiLip\g\ge bc_1^{-1}\LL(\g)\ge bc_1^{-2}=:\tilde{C}
    (c_1,c_2,d,s,\rho).
    $$
    Hence, Assumption (E3) is verified with the
    constant $C(\EL,c_1,c_2):=\tilde{C}$.
\end{proof}

With these ingredients we can now apply \thref{theorem:existenceCOMS}
to prove our central long-time existence result, \thref{thm:1.1},
for the total energy $\phi$ defined in \eqref{eq:total-energy}
for the knot energies $\EL\in\{E^{\alpha,p},\intM^{(p,q)},\TP^{(p,q)}\}
$
in the respective ranges of parameters $\alpha,p$ and $q$.

\begin{proof}[Proof of \thref{thm:1.1}]
In all three scenarios (i)--(iii)
the Banach space $\CB$ is chosen as
the energy space of the respective knot energy $\EL$, as
pointed out in \thref{rem:after-main-thm} of the introduction.
These Sobolev-Slobodecki\v{\i} spaces are all of the type
$W^{1+s,\rho}(\R/\Z,\R^n)$ for some $s\in (0,1)$ and $\rho\in
(\frac1s ,\infty)$ as considered in \thref{theorem:existenceCOMS}.
Moreover, the slightly smaller 
Sobolev-Slobodecki\v{\i} space $\CL$ is in each
of the cases (i)--(iii) a reflexive Banach space compactly 
embedded in $\CB$
according to 
\thref{lem:A.1} and \thref{prop:embedding}.
In addition, \thref{lem:A.1} for $k:=1$
guarantees that  the respective
norm $\|\cdot\|_{\CL}$ on the smaller Banach space $\CL\subset\CB$
is 
continuously Fr\'echet differentiable away from $0$,
which according to \thref{theorem:existenceCOMS}
yields that the differential inclusion
\eqref{eq:diff-inclusion} (if it holds at all)
reduces to the gradient flow equation
\eqref{EQN_h_8}, which corresponds to
\eqref{eq:gradflow} for each choice (i), (ii), and
(iii).  The $C^1$-regularity in time follows
from the continuity in time of the right-hand side
of \eqref{eq:gradflow}: The curve $\bm u$ is absolutely
continuous in time with values in $\CL_\ir$, and the
differential $D\phi$ is continuous on $\CL_\ir$, 
once Assumption (E1)  
is verified. 
In addition, \thref{lem:A.1} implies that the duality mapping 
$\FJ_{\CL,\theta}$ is
a homeomorphism from $\CL$ onto the dual space $\CL^*$.
To summarise, the right-hand side
of \eqref{eq:gradflow} is the composition of continuous mappings
and therefore continuous in time.

So, it remains to establish \eqref{eq:diff-inclusion}, and for that
it suffices to check that the respective knot energy $\EL$
satisfies Assumptions (E1)--(E3) in each case (i)--(iii). 

(i)\, O'Hara's knot energies $E^{\alpha,p}$ are non-negative
by definition, and they are continuously
differentiable on $W_{\ir}^{1+\frac{\alpha p-1}{2p},2p}(\R/\Z,\R^n)$
in the range of parameters $\alpha,p$ given in \thref{thm:1.1}.
This is shown in \thref{theorem:C1OHara} in Section
\ref{sec:5}, thus verifying 
Assumption (E1) for $s:=\frac{\alpha p-1}{2p}$
and $\rho:=2p$. 

The integrand of $E^{\alpha,p}$ in \eqref{eq:ohara}
obviously satisfies
the assumptions (e1) and (e2) of
\thref{lemma:WeakSubcontinuity} for $N=2$, $s:=\frac{\alpha p-1}{2p}$
and $\rho:=2p$ (which implies $s>\frac1\rho$ since $\alpha p >2$).
Therefore, Assumption (E2) is verified.

To verify Assumption (E3) for $E^{\alpha,p}$
assume first that $\g\in C^1_{\ia}(\R/\Z,\R^n)$ 
with $E^{\alpha,p}(\g)\\\le M$, which by means of
\cite[Theorem~1.1]{Blatt:2012:BoundednessandRegularizingEffectsofOHarasKnotEnergies}\footnote{Note 
again the unfortunate misprint in that reference.}
implies that $\g$ is of class $W^{1+\frac{\alpha p-1}{2p},2p}$. 
Then, we may approximate $\g$ with respect to the $\SobSpace$-norm by 
curves
$\g_k \in C^\infty_{\ia}(\R/\Z, \R^n)$ (see
\cite[Theorem 4.2]{lagemann-vdm_2022}),
 and use the continuity of $\Eap$ implied by 
the upcoming \thref{theorem:C1OHara} to assume without loss of generality 
that $\Eap(\g_k) \le \Eap(\g) + 1\le M+1$ for all $k\in\N$. 
Now, to these smooth arc length-parametrised approximants we can apply 
\cite[Theorem~2.3]{OHara:1992:Familyofenergyfunctionalsofknots} 
to obtain a uniform bi-Lipschitz constant, 
at least for the restricted parameter range 
$\alpha\in (\frac{\alpha}p,2]$. 
But even if $\a > 2$, we may use O'Hara's work.
    For curves $\eta \in C^1(\R/\Z,\R^n)$ parametrised by arc length, 
one has $\abs{\Delta \eta} \le \intrinsicDistance[\eta] \le \frac 1 2$. Consequently, both,
the integrand of $\Eap$ and the energy  $\Eap$ itself
are non-decreasing in $\a$.
    This means that if $\Eap(\eta) \le M + 1$ for $\a \ge 2$, 
so is $\Eap[2,p]$ and we may use the bi-Lipschitz constant obtained for that energy.    
    O'Hara's bi-Lipschitz estimate yields a constant $K = K(\a, p, M+1)$ independent of 
$\g$ such that
    \[
        \textstyle
        K \abs{x-y}_{\R/\Z} \le \abs{\g_k(x)-\g_k(y)} \yrightarrow{k \to \infty} \abs{\g(x)-\g(y)}.
    \]
    This, together with the energy's invariance under reparametrisation and its
 positive $(2-\a p)$-homogeneity  implies that $\Eap$ satisfies the suppositions 
of \thref{lemma:assumption2Arclength} and thus also Assumption (E3).

(ii)\,
Assumption (E1) is satisfied by means of
\cite[Theorem~3]{BlattReiter:2015:TowardsaregularitytheoryforintegralMengercurvature}.
The integrand of $\intM^{(p,q)}$,
    \[
\textstyle e(\g; x_1, x_2, x_3) 
:= \frac {\abs{(\g(x_1) - \g(x_2)) \wedge 
(\g(x_3) - \g(x_2))}^q} {\abs{\g(x_1) - \g(x_2)}^p 
\abs{\g(x_2) - \g(x_3)}^p \abs{\g(x_1) - \g(x_3)}^p} 
\abs{\g'(x_1)} \abs{\g'(x_2)} \abs{\g'(x_3)}
    \]
    is non-negative and continuous in $\g$ with respect to the 
    $C^1$-topology for all pairwise distinct $x_1, x_2, x_3 \in \R/\Z$.
    Thus, Assumption (E2) is verified for $\intM^{(p,q)}$
    by means of
    \thref{lemma:WeakSubcontinuity} for $N:=3$, $\rho:=
    q$ and $s=\frac{3p-2}q-2.$

The energy $\intM^{(p,q)}$
is invariant under reparametrisations and
positively $(3+2q-3p)$-homogeneous, which we can combine
with 
\cite[Proposition~2.1]{BlattReiter:2015:TowardsaregularitytheoryforintegralMengercurvature} and
\thref{lemma:assumption2Arclength} to conclude that Assumption
(E3) holds as well.

(iii)\,
    The $C^1$-regularity of the tangent-point energies
    $\TP^{(p,q)}$ was first stated in 
    \cite[Remark~3.1]{BlattReiter:2015:Regularitytheoryfortangentpointenergiesthenondegeneratesubcriticalcase}, for a 
    proof see 
    \cite[Satz~7.4]{Wings:2018:StetigeDifferenzierbarkeittangentenpunktartigerKnotenenergien} or 
    \cite{steenebruegge_2022},
    so, Assumption (E1) holds for $\TP^{(p,q)}$

Applying \thref{lemma:WeakSubcontinuity} with 
$N:=2$, $\rho:=q$, and $s:=\frac {p-1} {q}-1$ 
to the integrand of $\TP^{(p,q)}$,
$
        e(\g;x_1,x_2) :=
{\abs{P^\perp_{\g'(x_1)}(\g(x_2) - \g(x_1))}^q}
{\abs{\g(x_2) - \g(x_1)}^{-p}} \abs{\g'(x_2)} \abs{\g'(x_1)}$
    we infer Assumption (E2) for $\TP^{(p,q)}$.

Finally, $\TP^{(p,q)}$ is invariant under reparametrisations
and positively $(q-p+2)$-homogeneous. Therefore, we can
combine
\cite[Proposition~2.7]{BlattReiter:2015:Regularitytheoryfortangentpointenergiesthenondegeneratesubcriticalcase}
with \thref{lemma:assumption2Arclength} for 
$s:=\frac{p-1}q-1$ 
and $\rho:=q$
to conclude that Assumption (E3) is satisfied for $\TP^{(p,q)}$
as well.
\end{proof}
 
\begin{proof}[Proof of \thref{cor:1.2}]
For simplicity we set $\CL:=\CL_\eps$.
    Note that for our choice of $\CL$, \eqref{eq:gradflow} is in fact well-defined, as by \thref{lem:A.1} the duality mapping $\FJ_{\CL, \theta}$ is a homeomorphism between $\CL$ and $\CL^*$ (and in particular single-valued).
        Combining the continuity of $\FJ_{\CL, \theta}^{-1}$ with the continuity of $\bm u$ and $D\phi$ (see the proof of \thref{thm:1.1} for the latter)
    we obtain that $\bm u'$ is almost everywhere equal to the continuous function $\bm v: [0, \infty) \to \CL, t \mapsto - \FJ_{\CL, \theta}^{-1}(D\phi[\bm u(t)])$.
    It remains to establish that $\bm u'(t)$ exists for all $t$ and is equal to $\bm v(t)$.
    Since $\bm u$ is absolutely continuous on compact intervals and $\CL$ is reflexive, we have 
    the  fundamental theorem of calculus \cite[Corollary~1.2.7, Definition~1.2.5, Proposition~1.2.3]{Arendt.2011}, so that
    $
        \textstyle \frac 1 h (\bm u(t + h) - \bm u(t)) = \frac 1 h \int_t^{t+h} \bm u'(r) \d r = \frac 1 h \int_t^{t+h} \bm v(r) \d r.
  $
    As $\bm v$ is continuous, we can immediately infer that the right-hand side converges to $\bm v(t)$ as $h \to 0$ and hence $\bm u$ is of class $C^1([0,\infty),\CL)$.

We continue by proving that the energy is non-increasing along the flow.
According to \thref{lem:A.1} the Banach space $\CL$ is reflexive in each of the
three cases (i)--(iii) of  \thref{thm:1.1}. The $\theta$-duality mapping
$\FJ_{\CL,\theta}:\CL\to 2^{\CL^*}$ is a duality map \emph{with weight}
$\varphi(s):=s^{\frac{\theta}{\beta}}$ in the language of
\cite[Definition~I.4.1]{Cioranescu:1990:GeometryofBanachSpacesDualityMappingsandNonlinearProblems}.
Therefore, we may apply \cite[Corollary~II.3.5]{Cioranescu:1990:GeometryofBanachSpacesDualityMappingsandNonlinearProblems}
to identify the inverse $\FJ^{-1}_{\CL,\theta}$ with the duality mapping on $\CL^*$
with weight $\varphi^{-1}(s)=s^{\frac{\beta}{\theta}}$, which is just
$\FJ_{\CL^*,\beta}$.
Thus, by the chain rule and \eqref{eq:gradflow},
\begin{align*}
\textstyle
\frac {\d} {\d t} \phi(\bm u(t)) & = \langle  D\phi[\bm u(t)],\bm u'(t)\rangle_{\CL^*\times \CL}
\smash{\overset{\eqref{eq:gradflow}}{=}}
-\langle  D \phi [\bm u(t)], \FJ_{\CL, \theta}^{-1}(D \phi[\bm u(t)]) \rangle_{\CL^*\times \CL}
\notag\\
\textstyle
&
=-\langle  D \phi [\bm u(t)], \FJ_{\CL^*, \beta}(D \phi[\bm u(t)])
\rangle_{\CL^{*}\times \CL^{**}}
=-\|D\phi[\bm u(t)]\|_{\CL^*}^\beta\le 0
\end{align*}
for all $t\ge 0$.
Hence $\phi\circ \bm u$ is non-increasing, so that \cite[Proposition~1.4.1]{AGS08}
implies that $\bm u$ is a $\theta$-curve of maximal slope with respect to the weak upper gradient\footnote{As discussed in the beginning of Section \ref{sec:2.1}, in our setting our notion of a
curve of maximal slope is equivalent to the one presented in \cite{AGS08}.}
$\norm[\CL^*]{D\phi[\bm u(t)]}$.
In fact, by Step 4 of \thref{theorem:existenceCOMS} whose prerequisites were verified in  the proof of \thref{thm:1.1}, it is even a strong upper gradient.
Finally, $\CL$ continuously embeds into $C^1(\R/\Z,\R^n)$ in all three cases 
of \thref{thm:1.1}, and thus $\bm u$ is a $C^1$-isotopy.
    All curves $\bm u(t)$ are embedded because $\phi (\bm u(t))<\infty$ for all
    $t\ge 0$  and so, by \cite{reiter_2005,blatt_2009a}, $[\bm u(t)] = [\g_0]$
    for all $t\ge 0$, i.e.~the knot class is preserved along the flow.
\end{proof}

The following corollary shows that the gradient flows obtained in \thref{thm:1.1} 
for each $\eps >0$ admit a converging subsequence as $\epsilon \to 0$.
\begin{corollary} \label{cor:eps_to_0}
	Let $\theta, \kappa \in (1,\infty)$, $\phi$ and the spaces  $\CB$ and
	$\CL_\epsilon$ for $\epsilon>0$  be as in \thref{thm:1.1}, and $\gamma_0 \in \CB_{\ir}$. Then for any sequence $(\gamma_{0,\epsilon})_\epsilon\subset
	\CL_\epsilon$ with
	\begin{align}
    \textstyle
		\gamma_{0,\epsilon} \xrightharpoonup[(\epsilon\to 0)]{} \gamma_0  \text{ in } \CB, \quad \text{ and } \quad \phi(\gamma_{0,\epsilon}) \xrightarrow[(\epsilon\to 0)]{} \phi(\gamma_0)\label{anfangskonvergenz_cor}
	\end{align}
	and for  corresponding solutions $\bm{u}_{\epsilon} \in C^1( [0,\infty),(\CL_{\epsilon}, \Norm{\cdot}_{\CL_{\epsilon}}) )$ 
 of the gradient flow equation \eqref{eq:gradflow} with $\bm{u}_{\epsilon}(0)=\gamma_{0,\epsilon}$,  there exists a subsequence $\epsilon_k \to 0$ and a curve $\bm{u}^{\ast} \in AC^{\theta}\left( [0,\infty),(\CB,\Norm{\cdot}_{\CB}) \right)$ such that $\bm{u}^{\ast}(0)=\gamma_0$,	 
	$\bm{u}_{\epsilon_k}(t) \xrightharpoonup[(k\to \infty)]{} \bm{u}^{\ast}(t)$ in $\CB$ and $\phi(\bm{u}^{\ast}(t))\le \phi(\gamma_0)$ for all $t\ge 0$.
\end{corollary}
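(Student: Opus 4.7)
The plan is to deduce this corollary from the abstract metric statement \thref{prop:eps_to_0}, applied with $(\sS_0, d_0) := (\CB, \norm[\CB]{\cdot})$, $(\sS_\eps, d_\eps) := (\CL_\eps, \norm[\CL_\eps]{\cdot})$, and $\sigma$ the weak topology on $\CB$. The task reduces to verifying its premises in each of the three cases (i)--(iii) of \thref{thm:1.1} and then reinterpreting its conclusion in the Banach setting.

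First, I would verify Assumption (M) for $\CB$: by \thref{lem:A.1} the space $\CB$ is reflexive, so the weak topology $\sigma$ is Hausdorff, weaker than the norm topology, and the norm is sequentially weakly lower semicontinuous. For the compatibility estimate $d_0(u,v) \le c_0 d_\eps(u,v)$, I would argue that the embedding constants for $\CL_\eps \hookrightarrow \CB$ are uniformly bounded as $\eps \to 0$. On the compact one-dimensional torus $\R/\Z$ this follows from the elementary estimate $[u]_{W^{s,\rho}} \le C(s_1-s)\,[u]_{W^{s_1,\rho}}$ for $0 < s < s_1 < 1$, with $C(s_1-s)$ bounded as $s_1 \to s$, which transfers via differentiation to the full-norm estimates in each of the three scales appearing in (i)--(iii).

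Next, I would verify $(\Phi 1)$--$(\Phi 3)$ for $\phi$ on $\CB$ itself. A close reading of Steps~1 and 2 in the proof of \thref{theorem:existenceCOMS} shows that the weak lower semicontinuity argument is actually executed for sequences in the full space $W^{1+s,\rho}$, the descent to the smaller space $\CL$ occurring only at the very end via continuous embedding; the same reasoning therefore yields $(\Phi 1)$ directly on $\CB$. Coercivity $(\Phi 2)$ is immediate from $\phi \ge 0$, and $(\Phi 3)$ follows from the reflexivity of $\CB$. By \thref{cor:1.2} each solution $\bm u_\eps$ of \eqref{eq:gradflow} is a $\theta$-curve of maximal slope for $\phi$ on $\CL_\eps$ with respect to the strong upper gradient $\norm[\CL_\eps^*]{D\phi[\cdot]}$, and \eqref{anfangskonvergenz_cor} combined with the boundedness of weakly convergent sequences in $\CB$ secures the initial-condition hypothesis of \thref{prop:eps_to_0}.

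With all prerequisites assembled, \thref{prop:eps_to_0} produces the subsequence $\eps_k \to 0$, a limit curve $\bm u^{\ast} \in AC^{\theta}_{\loc}([0,\infty),(\CB,\norm[\CB]{\cdot}))$ with $\bm u^{\ast}(0) = \g_0$, the pointwise weak convergence $\bm u_{\eps_k}(t) \rightharpoonup \bm u^{\ast}(t)$ in $\CB$ for every $t \ge 0$, and the energy bound $\phi(\bm u^{\ast}(t)) \le \phi(\g_0)$. The only step I expect to demand real attention is the uniform-in-$\eps$ control of the embedding constants across all three parameter regimes; everything else is a routine translation of the abstract metric result into the present Banach framework.
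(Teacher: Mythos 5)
Your proposal follows essentially the same route as the paper's proof: identify $(\sS_0,d_0)=(\CB,\Norm{\cdot}_{\CB})$ and $(\sS_{\eps},d_{\eps})=(\CL_{\eps},\Norm{\cdot}_{\CL_{\eps}})$ with $\sigma$ the weak topology, check (M) and {\rm ($\Phi$1)--($\Phi$3)} on $\CB$ by noting that Steps 1--2 of the proof of \thref{theorem:existenceCOMS} already run on the full space, invoke \thref{cor:1.2} for the maximal-slope property of $\bm u_{\eps}$, and apply \thref{prop:eps_to_0}; your worry about uniform embedding constants is a non-issue, since on the unit torus the Gagliardo kernel satisfies $|w|^{-(1+s\rho)}\le |w|^{-(1+(s+\eps)\rho)}$ for $|w|\le \tfrac12$, giving $c_0=1$ outright. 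The one point you gloss over is that \thref{prop:eps_to_0} only delivers $\bm u^{\ast}\in AC^{\theta}_{\loc}$, whereas the corollary asserts global $AC^{\theta}$; this requires the additional (one-line) observation that $\phi\ge 0$ makes the bound $\int_0^t |\bm u_{\eps}'|^{\theta}_{\sS_0}(r)\,\dif r\le \theta c_0^{\theta}\phi(\gamma_{0,\eps})$ uniform in $t$, so the weak limit $A$ of the metric derivatives lies in $L^{\theta}([0,\infty))$ and not merely in $L^{\theta}_{\loc}$.
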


\begin{remark}
	Since $\phi \in C^0\paren[\big]{W^{1+s,\rho}_{\ir}\paren{\R/\Z,\R^n}}$, we can choose convolutions $\gamma_{0,\epsilon} \in C^{\infty}\left( \R/\Z,\R^n \right)$ with $\Norm{\gamma_0-\gamma_{0,\epsilon}}_{W^{1+s,\rho}\left( \R/\Z,\R^n \right)} <\epsilon$ to secure Assumption \eqref{anfangskonvergenz_cor}. Furthermore, since the weak convergence in $\CB= W^{1+s,\rho}\left( \R/\Z,\R^n \right)$ implies strong convergence in $C^1\left( \R/\Z,\R^n \right)$ and since $C^1_{\ir} \subset C^1$ is an open subset, see \thref{lem:open}, it follows from $\eqref{anfangskonvergenz_cor}$ and $\g_0 \in \CB_{\mathrm{ir}} \subset C^{1}_{\mathrm{ir}}$ that for all sufficiently small $\eps>0$ the curves $\gamma_{0,\eps}$ are injective and regular. Therefore, we may use \thref{thm:1.1} to secure the existence of solutions of \eqref{eq:gradflow} starting at $\gamma_{0,\eps}$. 
\end{remark}

\begin{proof}[Proof of \thref{cor:eps_to_0}]
	The claim is a consequence of \thref{prop:eps_to_0}, whose prerequisites we now check. As was shown in step 1 of the proof of \thref{theorem:existenceCOMS}, $(\CB,\Norm{\cdot}_{\CB})$ with its weak topology satisfies Assumption (M). Moreover, for all $\gamma \in W^{1+s+\epsilon}\left( \R/\Z,\R^n \right)$ we have 
	$
		\Norm{\gamma}_{W^{1+s,\rho}\left( \R/\Z,\R^n \right)} \le \Norm{\gamma}_{W^{1+s+\epsilon, \rho}\left( \R/\Z,\R^n \right)}.
	$ 
	In addition, as was shown in the proof of \thref{thm:1.1}, $\phi$ satisfies the Assumptions {\rm (E1)--(E3)} und thus, by steps 1 and 2 of the proof of \thref{theorem:existenceCOMS}, it also satisfies the Assumptions {\rm ($\Phi$1)--($\Phi$3)} with $(\CB,\Norm{\cdot}_{\CB})$ as the underlying metric space. Furthermore, since we are only interested in small $\epsilon>0$, in view of \eqref{anfangskonvergenz_cor} we may assume that $\sup_{\epsilon>0} \phi(\gamma_{0,\epsilon})<\infty$ and $\sup_{\epsilon>0} \Norm{\gamma_{0,\epsilon}}_{\CB} <\infty$. 
   Moreover, by \thref{cor:1.2}, $\bm{u}_{\eps}$ is a $\theta$-curve of maximal slope for $\phi$ with respect to the strong upper gradient $|\partial\phi|_{\CL_{\eps}} = \Norm{D\phi}_{\CL^{\ast}_{\eps}}$ starting at $\gamma_{0,\epsilon}$.
	
	Identifying  $(\sS_0,d_0)$ with $(\CB,\Norm{\cdot}_{\CB})$ let $\sigma$ be
 the weak topology on $\CB$. Furthermore, for $\epsilon>0$ let $(\sS_{\epsilon},d_{\epsilon}):=(\CL_{\epsilon},\Norm{\cdot}_{\CL_{\epsilon}})$, $g_{\epsilon}:=|\partial \phi|_{\CL_{\epsilon}}$, and $u_{0,\epsilon}:=\gamma_{0,\epsilon}$. Then these satisfy all the assumptions of \thref{prop:eps_to_0} with $c_0=1$ which concludes the proof. Notice that since $\phi(\bm{u}^{\ast}(t))\ge 0$ and $\phi(\gamma_0)<\infty$, it follows that $\bm{u}^{\ast}$ is actually absolutely continuous and not only locally absolutely continuous.
\end{proof}

\section{O'Hara's knot energies are continuously
differentiable}
\label{sec:5}
Let us rewrite $E^{\alpha,p}$ as
\begin{equation}\label{eq:ohara-integrand}
    \textstyle
    \Eap(\g) := \iint_{(\R/\Z)^2} \eap(\g; x, y) \abs{\g'(x)} \abs{\g'(y)} \d x \d y, \quad \ea(\g; x, y) = \frac {1} {\abs{\Delta \g}^\a} - \frac {1} {\intrinsicDistance^\a},
\end{equation}
where $\Delta \g:= \Delta \g(x,y) := \g(x) - \g(y)$ and 
$\intrinsicDistance := \intrinsicDistance(x,y)$ is the 
intrinsic distance between $\g(x)$ and $\g(y)$, 
i.e.\ the length of the shortest arc of $\g$ connecting 
$\g(x)$ and $\g(y)$.
Note that O'Hara defined the energy for curves parametrised 
by arc length only and had 
$\abs{x-y}_{\R/\Z}^{-\a}$ as the second term.
Using $\intrinsicDistance^{-\a}$ instead
is a sensible generalisation as the 
energy is then invariant under reparametrisations, 
cf.\  \cite[Remark~4.1.1(6)]{ohara_2003}.

Our method of proof for continuous differentiability of
$E^{\a,p}$ is inspired by
\cite[Section~3]{ReiterSchumacher:2021:SobolevGradientsfortheMobiusEnergy}.
There are several arguments which carry over completely; we include 
these in our proof for the reader's convenience.
  Several technical results needed in the proof
  of \thref{theorem:C1OHara} below, namely Lemmata
  \ref{lemma:geometricNormC1}--\ref{lemma:mainTechnicalEstimateOHara}
  are deferred to the end of this section.

\begin{theorem}
    \label{theorem:C1OHara}
    Let $p \ge 1$, $\a > 0$, $2 < \a p < 2p + 1$.
    Then, $\Eap$ is continuously Fréchet-differentiable on $\SobSpaceir(\R/\Z, \R^n)$.
\end{theorem}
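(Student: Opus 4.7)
The plan is to verify that the formal first variation of $\Eap$ defines a bounded linear functional on $\SobSpace(\R/\Z,\R^n)$ and that the resulting map $\g \mapsto D\Eap[\g]$ depends continuously on $\g$, proceeding in three stages.

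First, I would compute the Gâteaux derivative at a regular injective curve $\g$ in direction $h$ by differentiating under the integral sign in \eqref{eq:ohara-integrand}. This yields three contributions: one from the arc-length factors $|\g'(x)||\g'(y)|$, and two from the chain rule applied to $(\cdot)^p$ composed with $\eap$, which splits further into a term stemming from $|\Delta\g|^{-\a}$ and one from $\intrinsicDistance^{-\a}$. The variation of $|\Delta\g|^{-\a}$ is the pointwise expression $-\a|\Delta\g|^{-\a-2}\langle\Delta\g,\Delta h\rangle$, while the variation of $\intrinsicDistance$ is a line integral of $\langle\g'/|\g'|,h'\rangle$ along the shorter arc between $x$ and $y$. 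This formal expression becomes the candidate for $D\Eap[\g][h]$.

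Second, I would establish integrability. The crucial pointwise estimate is a quantitative form of
\[
\textstyle
 0 \le \tfrac{1}{|\Delta\g|^\a} - \tfrac{1}{\intrinsicDistance^\a} \le C\, \intrinsicDistance^{-\a}\,\Psi(\g';x,y),
\]
where $\Psi(\g';x,y)$ measures the angular defect of $\g'$ between $x$ and $y$, together with an analogous pointwise bound for the first variation of $\eap$. Estimating $\Psi$ and its derivative counterparts by Gagliardo-type difference quotients of $\g'$, and combining with a Hölder split on $(\R/\Z)^2$, the fractional order $s := (\a p-1)/(2p)$ is seen to be precisely tuned so that the diagonal singularity of the derivative integrand integrates to a constant times $\|\g\|_{\SobSpace}^{C(\a,p)}\|h\|_{\SobSpace}$. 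The range $2 < \a p < 2p+1$ enters here in a sharp way: the lower bound secures the embedding into $C^1$ (so $|\Delta\g|/\intrinsicDistance$ stays bounded away from $0$ under the bi-Lipschitz control from Assumption~(E3)), and the upper bound is equivalent to $s < 1$, ensuring the Sobolev-Slobodecki\u{\i} setting is meaningful and the integrability budget suffices.

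Third, I would upgrade this to continuous Fréchet differentiability. For $\g_k \to \g$ in $\SobSpace$, the embedding into $C^{1,\mu}$ for some $\mu \in (0, s - 1/(2p))$, cf.~\thref{prop:embedding}, together with a uniform bi-Lipschitz constant along the sequence, makes the derivative integrand converge pointwise a.e.~and stay dominated by a joint integrable majorant obtained from applying Stage~2 to $\g_k$. Dominated convergence then gives $D\Eap[\g_k] \to D\Eap[\g]$ in $\SobSpace^*$. The main obstacle throughout is Stage~2: producing cancellation estimates for $|\Delta\g|^{-\a} - \intrinsicDistance^{-\a}$ and its variation purely from fractional differentiability of $\g'$, rather than $C^2$-regularity. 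This is the content of the technical lemmata \ref{lemma:geometricNormC1}--\ref{lemma:mainTechnicalEstimateOHara}, which provide the requisite Taylor-type expansions and difference-quotient bounds. A secondary difficulty absent in the $p=1$ case treated in \cite{BlattReiter:2013:StationarypointsofOHarasknotenergies} is the outer factor $\eap[p-1]$ that arises from differentiating $(\cdot)^p$: its $L^{p/(p-1)}$-behaviour must be distributed carefully across the Hölder split, which determines how the available smoothness budget $1+s$ of $\g$ is spent in the estimate.
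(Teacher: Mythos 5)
Your Stages 1 and 2 are broadly in line with the paper's strategy: the candidate derivative is the same, and the $L^p$-bounds you describe (angular-defect estimates for $|\Delta\g|^{-\a}-\intrinsicDistance^{-\a}$ and its variation, controlled by Gagliardo difference quotients of $\g'$) are exactly what Lemmata \ref{lemma:multilinearIntegral}--\ref{lemma:mainTechnicalEstimateOHara} deliver; the parameter bookkeeping you give for $s=\frac{\a p-1}{2p}$ is also correct. But there are two genuine gaps. First, you never address the fact that $\g\mapsto\intrinsicDistance(x,y)$, and hence $\g\mapsto\ea(\g)(x,y)$, is \emph{not} differentiable at those $(x,y)$ for which the length-minimising arc $I_{\g_t}(x,y)$ switches sides as $t$ ranges over $[0,1]$ along the segment $\g_t=\g+t\eta$. ``Differentiating under the integral sign'' and the Taylor-type expansion you invoke are only available off this exceptional set, and the set depends on the direction $\eta$. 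The paper's proof spends most of its effort precisely here: it splits $(\R/\Z)^2$ into $U(\eta)\cup V(\eta)$, uses a second-variation Taylor bound on $U(\eta)$, and on $V(\eta)$ replaces differentiability by a Lipschitz estimate for $\ea$ that is uniform in $(x,y)$ (Claim 4) combined with the measure bound $|V(\eta)|\le\frac{6}{\BiLip(\g)}\norm[L^\infty]{\eta'}$ (Claim 5); only the product of these two facts yields the $o(\norm[\SobSpace]{\eta})$ remainder. The technical lemmata you point to do not contain this argument, so your proof of Fréchet differentiability does not close.

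Second, your Stage 3 does not give continuity of the derivative in the operator norm. Dominated convergence applied to the derivative integrand yields $\langle D\Eap[\g_k],h\rangle\to\langle D\Eap[\g],h\rangle$ for each \emph{fixed} $h$, i.e.\ weak-$*$ convergence in the dual, whereas continuous Fr\'echet differentiability (and the G\^ateaux-to-Fr\'echet upgrade your outline implicitly relies on) requires $\sup_{\norm[\SobSpace]{h}\le 1}|\langle D\Eap[\g_k]-D\Eap[\g],h\rangle|\to 0$. To obtain this uniformity in $h$ one must estimate the difference $F_1(\g+\eta_1;\cdot)-F_1(\g;\cdot)$ by $\norm[\SobSpace]{\cdot}\,o(1)$ as $\eta_1\to 0$, which again forces the $U/V$ splitting: the intervals $I_{\g}$ and $I_{\g+\eta_1}$ over which the variation of $\intrinsicDistance$ is integrated need not coincide, and where they differ the integrands are not pointwise close --- only the measure of that set is small. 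This is the content of Claim 6 in the paper and cannot be extracted from a pointwise integrable majorant alone.
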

In the following, we will use the \newterm{derivative with respect to arc length} $D_\g \eta (u) := \frac {\eta'(u)} {\abs{\g'(u)}}$ and the interval $I_\g(x,y)$ parametrising the arc of $\g$ where $\intrinsicDistance(x,y)$ is attained.
To be more precise, $I_\g(x,y)$ is the interval containing $x$ and one of $y-1, y, y+1$ such that 
$
    \intrinsicDistance(x,y) = \int_{I_\g(x,y)} \abs{\g'(t)} \d t$.
This is well-defined whenever $\intrinsicDistance(x,y) < \frac L 2$, i.e.\ for almost all $x,y \in \R$.
When there is no risk of confusion, we omit the arguments $x$ and $y$.
Lastly, we need the \newterm{minimal velocity} $v_\g := \essinf_{x \in \R/\Z} \abs{\g'(x)}$.
\begin{proof}[Proof of \thref{theorem:C1OHara}]
    For $p=1$, this was already proved in \cite[Proposition~2.1]{BlattReiter:2013:StationarypointsofOHarasknotenergies}, so we may assume $p>1$.

    We use the chain rule (see e.g.\ \cite[Proposition~4.10]{Zeidler:1993:NonlinearfunctionalanalysisanditsapplicationsFixedpointtheorems} for the general Banach space version) to prove our statement.
    As outer function, we choose a geometric $L^p$-norm additionally depending on $\g$:
   \begin{align*} 
       \norm[L^p_\cdot((\R/\Z)^2)]{\cdot}^p \colon &
       \SobSpaceir(\R/\Z, \R^n) \times L^p((\R/\Z)^2) \to \R,\\
       &(\g,g) \mapsto \textstyle \iint_{(\R/\Z)^2} \abs{g(x,y)}^p \abs{\g'(x)} \abs{\g'(y)} \d y \d x .
   \end{align*} 
    We consider the integrand $\ea$ 
    in \eqref{eq:ohara-integrand}
    as a mapping from $\smash{\SobSpaceir}$ to $L^p((\R/\Z)^2)$ for the inner function.
    Then, $\Eap(\g) = \Lpg{\cdot}^p \circ \ea (\g)$ and it suffices to show that both functions are $C^1$.
    \thref{lemma:geometricNormC1} together with the embedding
    result, part (ii) of \thref{prop:embedding},
    take care of the outer function, so we only need to look at $\ea$.
    In order to do this, define
    \begin{align*}
        F_k(\g; \eta_1, \ldots, \eta_k) &:= \delta^k(\g \mapsto \ea(\g)(x,y))(\eta_1, \ldots, \eta_k),\\
        G_k(\g; \eta_1, \ldots, \eta_k) &:= \textstyle\int_{I_\g}\delta^k(\g \mapsto \abs{\g'(t)})(\eta_1, \ldots, \eta_k)  \d t.
    \end{align*}
    Here, $\delta^k$ denotes the \newterm{$k$-th variation}.
    As for all $x \in \R/\Z$ there exists exactly one $y \ne x \in \R/\Z$ such that $I_\g(x,y)$ is not well-defined, we work on
    \[
        \Sigma := \bigl\{(x,y) \in (\R/\Z)^2 \ \vert \ x \ne y \text{ and } \intrinsicDistance(x,y) < \tfrac {\LL(\g)} {2} \bigr\}
    \]
    which is open (since $\g \in \smash{\SobSpace} \hookrightarrow C^1$) and is the same as $(\R/\Z)^2$ up to a set of measure $0$.
    For all $\g \in \SobSpaceir$ and $(x,y) \in \Sigma$, there is an open neighbourhood $U(x,y) \subseteq \SobSpaceir(\R/\Z, \R^n)$ of $\g$ such that $\eta \mapsto I_\eta(x,y)$ is constant on $U(x,y)$.
    This means that the $G_k$ exist for all such $(x,y)$ and a simple calculation shows that
    \begin{align*}
        G_1(\g; \eta_1) &= \textstyle\int_{I_\g} \langle D_\g \g, D_\g \eta_1 \rangle \abs{\g'} \d \tau,\\
        G_2(\g; \eta_1, \eta_2) &= \textstyle\int_{I_\g} (\langle D_\g \eta_1, D_\g \eta_2\rangle - \langle D_\g \g, D_\g \eta_1 \rangle \langle D_\g\g, D_\g \eta_2 \rangle) \abs{\g'} \d t.
    \end{align*}
    To shorten notation, we left out the $t$-dependencies in these terms and will also do this in the following when there is no risk of confusion.
    Recall that $\intrinsicDistance(x,y) = 
    \int_{I_\g} \abs{\g'(t)} \d t$.
    For fixed $\eta$ and $\abs{\tau} < 1$ sufficiently small, the estimate $v_{\g + \tau \eta} \ge \frac 1 2 v_\g$ holds and so
    \begin{align*}
        \abs{D_{\g + \tau \eta} \tilde \eta (t)}
        &=\textstyle \abs*{\frac {\tilde \eta'(t)} {\abs{\g'(t) + \tau \eta'(t)}}}
        \le \abs{D_{\g} \tilde \eta} \abs*{\frac {\g'(t)} {\abs{\g'(t) + \tau \eta'}}}
        \le \abs{D_{\g} \tilde \eta} \norm[L^\infty]{\g'} \frac 2 {v_\g}.
    \end{align*}
    This, together with \thref{lemma:boundGeometricDerivative}, enables us to find integrable majorants for the $\tau$-derivatives of $\abs{\g'_\tau}:=\abs{\g'(t) + \tau \eta'(t)}$: 
    They only consist of sums of inner products of $D_{\g_\tau} \g_\tau$, $D_{\g_\tau} \eta$  and $\abs{\g_\tau'}$ (whose derivatives again fit the pattern).
    Consequently, $\delta^k d_\g = G_k(\g)$ and we may calculate for $(x,y) \in \Sigma$, that 
    \[
        \textstyle
        F_1(\g; \eta_1) = \a \paren[\Big]{\frac 1 {\intrinsicDistance^{\a + 1}} G_1(\g; \eta_1) - \frac 1 {\abs{\Delta \g}^{\a + 2}} \langle \Delta \g, \Delta \eta_1 \rangle}
    \]
    and 
    \begin{align*}
        F_2(\g; \eta_1, \eta_2) = & \textstyle
        \a (\a + 2) \frac 1 {\abs{\Delta\g}^{\a + 4}}\langle \Delta \g, \Delta \eta_1\rangle \langle \Delta \g, \Delta \eta_2\rangle - \a \frac {\langle \Delta \eta_1, \Delta \eta_2\rangle} {\abs{\Delta \g}^{\a + 2}}\\
        &\textstyle - \a (\a + 1) \frac 1 {\intrinsicDistance^{\a + 2}} G_1(\g; \eta_1) G_1(\g; \eta_2) + \a \frac 1 {\intrinsicDistance^{\a + 1}} G_2(\g; \eta_1, \eta_2).
    \end{align*}
    Up until now, these are only pointwise limits and we still need to show that $F_1$ is indeed the Fréchet-derivative of $\ea$ and also continuous with respect to $\g$.
    Let us first show that $F_1$ is a valid candidate for a derivative.
    \begin{claim}
        \label{theorem:C1OHara:claim:boundFirstVariation}
        There is $C=C(\g)>0$ which continuously depends on $\g$ such that $\norm[L^p]{F_1(\g; \eta)}\\\le C \norm[\SobSpace]{\eta}$ for all $\eta \in \SobSpace(\R/\Z, \R^n)$.
    \end{claim}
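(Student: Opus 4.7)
The plan is to exploit the cancellation between the two singular summands of $F_1(\g;\eta)$, which individually fail to be in $L^p$ against the fractional regularity of $\eta'$, and then reduce the $L^p$-bound to the Sobolev-Slobodecki\v{\i} seminorm of $\eta'$. As a preliminary step I restrict to a $\SobSpace$-neighbourhood of $\g$ on which the bi-Lipschitz constant and the minimal velocity are uniformly bounded from below (using \thref{lemma:assumption2Arclength} together with the fact that both quantities depend continuously on $\g$ via $\SobSpaceir \hookrightarrow C^1$), so that all constants appearing subsequently depend continuously on $\g$.

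To expose the cancellation I reparametrise the minimising arc $I_\g(x,y)$ by arc length with $T := \dot\g$, and use the identities $\Delta\g = \int_{I_\g} T \d s$ and $\Delta\eta = \int_{I_\g}\dot\eta \d s$ to rewrite both summands of $F_1/\a$ as double integrals over $I_\g\times I_\g$. Adding and subtracting a convenient intermediate term yields the decomposition
\[
    \frac{G_1(\g;\eta)}{\intrinsicDistance^{\a+1}} - \frac{\langle \Delta\g,\Delta\eta\rangle}{|\Delta\g|^{\a+2}}
    = \frac{1}{|\Delta\g|^{\a+2}}\iint_{I_\g\times I_\g}\langle T(t_1),\dot\eta(t_1)-\dot\eta(t_2)\rangle \d t_1 \d t_2
    + R(\g,\eta;x,y),
\]
where the remainder $R$ absorbs the contribution of $|\Delta\g|^{-(\a+2)}-\intrinsicDistance^{-(\a+2)}$.

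The main term is then estimated using (i) the uniform comparability $|\Delta\g| \sim \intrinsicDistance$, and (ii) a weighted H\"older inequality in $(t_1,t_2)$ with weight $|t_1-t_2|^{\a/2}$, so that the inner double integral factorises into a $|I_\g|$-power times the Sobolev-Slobodecki\v{\i} density $|\dot\eta(t_1)-\dot\eta(t_2)|^{2p}/|t_1-t_2|^{\a p}$. Taking $L^p$-norms in $(x,y)$, changing back to the original parametrisation, and applying Fubini reduces the bound to
\[
    \iint_{(\R/\Z)^2}\frac{|\eta'(x)-\eta'(y)|^{2p}}{|x-y|_{\R/\Z}^{\a p}} \d x \d y \;\lesssim\; \seminorm[\SobSpaceD]{\eta'}^{2p} \;\lesssim\; \norm[\SobSpace]{\eta}^{2p},
\]
which is the defining seminorm of $\eta' \in W^{\frac{\a p-1}{2p},2p}(\R/\Z,\R^n)$. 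The remainder $R$ is handled via $\bigl|\intrinsicDistance^{\a+2}-|\Delta\g|^{\a+2}\bigr| \le (\a+2)\intrinsicDistance^{\a+1}(\intrinsicDistance-|\Delta\g|)$ combined with the isoperimetric-type identity $\intrinsicDistance^2 - |\Delta\g|^2 = \tfrac12\iint_{I_\g\times I_\g}|T(s)-T(t)|^2 \d s \d t$ and the embedding $\SobSpaceD\hookrightarrow C^{0,\mu}$ for some $\mu>0$; this supplies an extra $|I_\g|^{2\mu}$ and renders $R$ strictly less singular than the main term.

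The main obstacle is precisely the near-diagonal analysis outlined above: since neither singular summand of $F_1$ lies in $L^p$ on its own at the available fractional regularity of $\eta'$, the argument hinges on the explicit double-integral representation encoding the cancellation, together with a carefully balanced H\"older exponent that reconstructs the Sobolev-Slobodecki\v{\i} seminorm. The continuous dependence $C=C(\g)$ then drops out by tracking each constant through these estimates as a continuous function of $\norm[\SobSpace]{\g}$, $1/v_\g$ and $1/\BiLip(\g)$.
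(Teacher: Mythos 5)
Your decomposition of $F_1$ is essentially the paper's: the paper also splits off the bracket $\intrinsicDistance\, G_1(\g;\eta)-\langle\Delta\g,\Delta\eta\rangle$ from a term carrying the kernel difference $\abs{\Delta\g}^{-(\a+2)}-\intrinsicDistance^{-(\a+2)}$, and your treatment of the remainder $R$ via $\intrinsicDistance^2-\abs{\Delta\g}^2=\tfrac12\iint_{I_\g^2}\abs{\tau_\g(s)-\tau_\g(t)}^2\,\dif t\,\dif s$ is exactly how the paper's \thref{lemma:multilinearIntegral} handles its second term. The genuine gap is in your estimate of the main term. You bound
\[
\textstyle
\iint_{I_\g^2}\langle T(t_1),\,D_\g\eta(t_1)-D_\g\eta(t_2)\rangle\,\dif t_1\,\dif t_2
\]
by the single difference $\abs{D_\g\eta(t_1)-D_\g\eta(t_2)}$ and then apply weighted H\"older with weight $\abs{t_1-t_2}^{\a/2}$. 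This cannot close: the H\"older step leaves the factor $\intrinsicDistance^{\,2-\frac1p+\frac\a2}$ in front of the localised Gagliardo density to the power $\frac1{2p}$, so after dividing by $\intrinsicDistance^{\a+2}$ and taking $p$-th powers one is left integrating $\intrinsicDistance^{-\frac{\a p}{2}-1}$ over $(\R/\Z)^2$, which diverges near the diagonal. Equivalently, Jensen plus Fubini reduce the single-difference route to $\iint\abs{t_1-t_2}^{-\a p}\abs{D_\g\eta(t_1)-D_\g\eta(t_2)}^p\,\dif t_1\,\dif t_2$, i.e.\ to $D_\g\eta\in W^{\a-\frac1p,p}$; Sobolev embedding from $W^{\frac{\a p-1}{2p},2p}$ gives at best order $\frac\a2$, and $\frac\a2<\a-\frac1p$ precisely because $\a p>2$. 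So the regularity is genuinely insufficient for your estimate.

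The missing idea is the antisymmetrisation in $(t_1,t_2)$: averaging with the swapped integrand turns $\langle T(t_1),D_\g\eta(t_1)-D_\g\eta(t_2)\rangle$ into $\tfrac12\langle T(t_1)-T(t_2),\,D_\g\eta(t_1)-D_\g\eta(t_2)\rangle$, a product of \emph{two} increments, one of the unit tangent and one of $D_\g\eta$. Only then do Cauchy--Schwarz and the integrated estimate of \thref{lemma:mainTechnicalEstimateOHara} with $\beta=\a+2$ (so that $(\beta-2)p=\a p=1+\frac{\a p-1}{2p}\cdot 2p$ matches the Gagliardo seminorms of $\g'$ and $\eta'$ exactly) produce the claimed bound; this is precisely the content of the paper's \thref{lemma:boundedDifference}. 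A related, milder caveat applies to your remainder: the pointwise bound $\iint_{I_\g^2}\abs{\Delta\tau_\g}^2\lesssim\intrinsicDistance^{2+2\mu}$ from the H\"older embedding, with $2\mu<\a-\frac2p$, still leaves the non-$L^p$ singularity $\intrinsicDistance^{2\mu-\a}$, so there too you must use the integrated seminorm estimate of \thref{lemma:mainTechnicalEstimateOHara} rather than the embedding alone.
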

    In order to show this, decompose $F_1$ as
    \[
        \textstyle
        \frac {\a} {\intrinsicDistance^{\a + 2}} \paren{ \intrinsicDistance G_1(\g;\eta_1) - \langle\Delta \g, \Delta \eta_1 \rangle } - \a \paren[\Big]{\frac 1 {\abs{\Delta \g}^{\a + 2}} - \frac 1 {\intrinsicDistance^{\a + 2}}} \langle \Delta \g, \Delta \eta_1\rangle.
    \]
    \thref{lemma:boundedDifference} gives us a fitting upper bound for the first term, \thref{lemma:multilinearIntegral} gives one for the second term when we choose $\varphi = 0$, $\psi = \a$, $\tilde \eta_1 := \g$ and $\tilde \eta_2 := \eta_1$ (note that $L_1 = L_2 = \frac {\Delta} {\intrinsicDistance}$). 
    
    We will later need a similar bound for $F_2$.
    \begin{claim}
        \label{theorem:C1OHara:claim:boundSecondVariation}
        There is $\Xi = \Xi(\g) > 0$ depending continuously on $\g$ such that $\norm[L^p]{F_2(\g; \eta_1, \eta_2)} \\\le \Xi \norm[\SobSpace]{\eta_1} \norm[\SobSpace]{\eta_2}$.
    \end{claim}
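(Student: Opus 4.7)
The plan is to mirror the proof of Claim~\ref{theorem:C1OHara:claim:boundFirstVariation}: decompose $F_2$ into a sum of pieces, each of which either (a) features a pointwise cancellation between a Euclidean quantity and its intrinsic counterpart that is controlled by \thref{lemma:boundedDifference}, or (b) is of the multilinear form handled by \thref{lemma:multilinearIntegral}. Since $F_2$ has four summands with two different leading singularities ($\abs{\Delta\g}^{-(\a+2)}$ and $\intrinsicDistance^{-(\a+1)}$) and mismatched prefactors $\a(\a+2)$ vs.\ $\a(\a+1)$, the regrouping is slightly more delicate than in the $F_1$ case.

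Concretely, I would first treat the bilinear remainder terms by writing
$$\a\paren[\Big]{\frac{G_2(\g;\eta_1,\eta_2)}{\intrinsicDistance^{\a+1}}-\frac{\langle\Delta\eta_1,\Delta\eta_2\rangle}{\abs{\Delta\g}^{\a+2}}}=\frac{\a}{\intrinsicDistance^{\a+2}}\paren[\big]{\intrinsicDistance\, G_2(\g;\eta_1,\eta_2)-\langle\Delta\eta_1,\Delta\eta_2\rangle}-\a\langle\Delta\eta_1,\Delta\eta_2\rangle\paren[\Big]{\frac{1}{\abs{\Delta\g}^{\a+2}}-\frac{1}{\intrinsicDistance^{\a+2}}}.$$
This is exactly the $F_1$-decomposition, with the bilinear form $G_2$ in place of $G_1$ and $\eta_2$ in place of $\g$: the first summand is a bounded-difference of the type produced by \thref{lemma:boundedDifference} (applied to the bilinear structure of $G_2$), while the second falls under \thref{lemma:multilinearIntegral} with the choices $\varphi=0$, $\psi=\a$, $\tilde\eta_1:=\eta_1$, $\tilde\eta_2:=\eta_2$. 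For the remaining pair built from $\langle\Delta\g,\Delta\eta_1\rangle\langle\Delta\g,\Delta\eta_2\rangle$ and $G_1(\g;\eta_1)G_1(\g;\eta_2)$, I would insert the intermediate mixed quantities $\tfrac{\langle\Delta\g,\Delta\eta_1\rangle}{\abs{\Delta\g}^2}\cdot\tfrac{G_1(\g;\eta_2)}{\intrinsicDistance}\cdot\intrinsicDistance^{-\a}$ and similar to split the expression into: a term proportional to $\abs{\Delta\g}^{-\a}-\intrinsicDistance^{-\a}$ multiplied by two cosine-like factors; two terms featuring a single difference $\tfrac{\langle\Delta\g,\Delta\eta_i\rangle}{\abs{\Delta\g}^2}-\tfrac{G_1(\g;\eta_i)}{\intrinsicDistance}$ multiplied by the other; and an extra summand $(\a(\a+2)-\a(\a+1))\intrinsicDistance^{-(\a+2)}G_1(\g;\eta_1)G_1(\g;\eta_2)$ produced by the mismatched prefactors. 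Each of these pieces again matches the scope of \thref{lemma:multilinearIntegral}, using $\tilde\eta_i\in\{\g,\eta_1,\eta_2\}$ and exponents $\varphi,\psi$ adjusted to absorb the factor $\abs{\g'(x)}\abs{\g'(y)}$ from the $L^p$-norm with weight on $(\R/\Z)^2$.

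The main obstacle will be the bookkeeping: ensuring that in each summand every remaining factor of $\abs{\Delta\g}^{-1}$ or $\intrinsicDistance^{-1}$ is tamed by a genuine cancellation producing the Sobolev--Slobodecki\u\i\ control $\norm[\SobSpace]{\eta_i}$, rather than only an $L^\infty$- or $L^p$-bound that would be insufficient in the critical range $\a p>2$. Once the correct regrouping has been identified, the remainder is a finite sum of applications of \thref{lemma:boundedDifference} and \thref{lemma:multilinearIntegral}, each producing a constant that depends continuously on $\g$ (through $v_\g$, $\norm[L^\infty]{\g'}$ and $\BiLip\g$, exactly as in Claim~\ref{theorem:C1OHara:claim:boundFirstVariation}). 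Summing these contributions and collecting the $\norm[\SobSpace]{\eta_i}$ factors yields the stated inequality with $\Xi=\Xi(\g)$ continuous in $\g$.
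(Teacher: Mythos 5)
Your overall strategy -- decompose $F_2$ into pieces and feed each piece to \thref{lemma:boundedDifference} or \thref{lemma:multilinearIntegral} -- is the same as the paper's, and your treatment of the pieces carrying the factor $\frac{1}{\abs{\Delta\g}^{2+\psi}}-\frac{1}{\intrinsicDistance^{2+\psi}}$ matches terms \eqref{theorem:C1OHara:eq:F21} and \eqref{theorem:C1OHara:eq:F22} of the paper. However, your specific regrouping leaves a genuine gap. The term $\frac{\a}{\intrinsicDistance^{\a+2}}\bigl(\intrinsicDistance\, G_2(\g;\eta_1,\eta_2)-\langle\Delta\eta_1,\Delta\eta_2\rangle\bigr)$ is \emph{not} of the form controlled by \thref{lemma:boundedDifference}: that lemma bounds $\frac{1}{\intrinsicDistance^{\a+2}}\bigl(\intrinsicDistance H-\langle\Delta\eta_1,\Delta\eta_2\rangle\bigr)$ with $H=\int_{I_\g}\langle D_\g\eta_1,D_\g\eta_2\rangle\abs{\g'}\d t$, whereas $G_2=H-\int_{I_\g}\varphi_1\varphi_2\abs{\g'}\d t$ with $\varphi_i=\langle D_\g\g,D_\g\eta_i\rangle$. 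The discrepancy $-\frac{\a}{\intrinsicDistance^{\a+1}}\int_{I_\g}\varphi_1\varphi_2\abs{\g'}\d t$ is generically of size $\intrinsicDistance^{-\a}$ near the diagonal (take $\eta_1=\eta_2=\g$ to see this) and hence not in $L^p$ since $\a p>2$. The same objection applies to your leftover summand $\a\,\intrinsicDistance^{-(\a+2)}G_1(\g;\eta_1)G_1(\g;\eta_2)$ from the mismatched prefactors: it has no cancellation and is also of size $\intrinsicDistance^{-\a}$, so it does not "match the scope of \thref{lemma:multilinearIntegral}", which requires the difference factor $\frac{1}{\abs{\Delta\g}^{2+\psi}}-\frac{1}{\intrinsicDistance^{2+\psi}}$ to produce the tangent-variation gain.

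The missing idea is that these two divergent pieces must be recombined: their sum is $\frac{\a}{\intrinsicDistance^{\a+2}}\bigl(G_1(\g;\eta_1)G_1(\g;\eta_2)-\intrinsicDistance\int_{I_\g}\varphi_1\varphi_2\abs{\g'}\d t\bigr)$, which by symmetrisation equals $-\frac{\a}{2\intrinsicDistance^{\a+2}}\iint_{I_\g^2}\Delta\varphi_1(s,t)\Delta\varphi_2(s,t)\abs{\g'(s)}\abs{\g'(t)}\d t\d s$. Only in this combined form does one see the cancellation, and it is then estimated via Cauchy--Schwarz and a direct application of \thref{lemma:mainTechnicalEstimateOHara} -- a lemma your proposal never invokes, and which is not subsumed by the two lemmas you cite. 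This recombination and the double-integral rewriting is exactly the heart of the paper's treatment of the term \eqref{theorem:C1OHara:eq:F23}; without it the proposed decomposition does not close.
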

    The central ingredient is once again the right decomposition.
    Rewrite $F_2$ as
    \begin{align}
        \label{theorem:C1OHara:eq:F21}
        &\textstyle\a (\a + 2) \langle\Delta \g, \Delta \eta_1\rangle \langle\Delta \g, \Delta \eta_2\rangle \paren[\Big]{\frac 1 {\abs{\Delta \g}^{\a + 4}} - \frac 1 {\intrinsicDistance^{\a + 4}}}\\
        \label{theorem:C1OHara:eq:F22}
        &\textstyle- \a \langle \Delta \eta_1, \Delta \eta_2 \rangle \paren[\Big]{\frac 1 {\abs{\Delta \g}^{\a + 2}} - \frac 1 {\intrinsicDistance^{\a + 2}}} \\
        \label{theorem:C1OHara:eq:F23}
        &\textstyle+ \frac {\a} {\intrinsicDistance^{\a + 2}} \paren*{G_1(\g; \eta_1) G_1(\g; \eta_2) + \intrinsicDistance G_2(\g; \eta_1, \eta_2) - \langle \Delta \eta_1, \Delta \eta_2\rangle}\\
        \label{theorem:C1OHara:eq:F24}
        &\textstyle+ \a (\a + 2) \frac 1 {\intrinsicDistance^{\a + 4}} \paren{\langle \Delta \g, \Delta \eta_1\rangle \langle \Delta \g, \Delta \eta_2\rangle - \intrinsicDistance G_1(\g; \eta_1) \intrinsicDistance G_1(\g; \eta_2)}.
    \end{align}
    We can use \thref{lemma:multilinearIntegral} with $\psi = \a + 2$, $\varphi = 0$, $\tilde \eta_1 = \tilde \eta_3 = \g$, $\tilde \eta_2 = \eta_1$ and $\tilde \eta_4 = \eta_2$ to deal with \eqref{theorem:C1OHara:eq:F21} and the same Lemma with $\psi = \a$, $\varphi = 0$, $\tilde \eta_1 =\eta_1$ and $\tilde \eta_2 = \eta_2$ to find an upper bound for \eqref{theorem:C1OHara:eq:F22}.
    In order to take care of \eqref{theorem:C1OHara:eq:F23}, let us define $H := \int_{I_\g} \langle D_\g \eta_1, D_\g\eta_2\rangle \abs{\g'} \d t$.
    Then,
    $
        \eqref{theorem:C1OHara:eq:F23} 
	\textstyle
        = \frac {\a} {\intrinsicDistance^{\a + 2}} \paren*{\intrinsicDistance H - \langle \Delta \eta_1, \Delta \eta_2\rangle} + \frac {\a} {\intrinsicDistance^{\a+2}} \paren[\big]{G_1(\g;\eta_1)G_1(\g;\eta_2) - \intrinsicDistance\paren{H - G_2(\g; \eta_1, \eta_2)}}.
    $
    The first term is again bounded above via \thref{lemma:boundedDifference}, so let us take a look at the second one.
    Define $\varphi_i := \langle D_\g \g, D_\g \eta_i \rangle$.
    Then, $H - G_2(\g; \eta_1, \eta_2) = \int_{I_\g} \varphi_1 \varphi_2 \abs{\g'} \d t$ and thus the second term is equal to
    \begin{align*}
        &\textstyle\frac {\a} {\intrinsicDistance^{\a+2}} \paren[\big]{\int_{I_\g} \varphi_1(s) \abs{\g'(s)} \d s \int_{I_\g} \varphi_2(t) \abs{\g'(t)} \d t - \iint_{I_\g^2} \varphi_1(t)\varphi_2(t) \abs{\g'(s)} \abs{\g'(t)} \d t \d s}\\
        =&\textstyle \frac {\a} {2 \intrinsicDistance^{\a+2}} \bigl(\iint_{I_\g^2} (\varphi_1(s) - \varphi_1(t))\varphi_2(t) \abs{\g'(s)} \abs{\g'(t)} \d t \d s \\
        &\textstyle + \iint_{I_\g^2} (\varphi_1(t) - \varphi_1(s))\varphi_2(s) \abs{\g'(s)} \abs{\g'(t)} \d t \d s\bigr)\\
        =&\textstyle - \frac {\a} {2 \intrinsicDistance^{\a+2}} \iint_{I_\g^2} \Delta \varphi_1(s,t) \Delta \varphi_2(s,t) \abs{\g'(s)} \abs{\g'(t)} \d t \d s
    \end{align*}
    If we can estimate this by terms controlled via \thref{lemma:mainTechnicalEstimateOHara}, we have an upper bound for the $L^p$-norm of \eqref{theorem:C1OHara:eq:F23}.
    To achieve this, first apply the Hölder inequality (with $p=q=2$) and our usual upper bound for the line elements to reduce the problem to bounding
    \begin{align*}
        &\textstyle\frac 1 {\intrinsicDistance^{\a + 2}} \iint_{I_\g^2} \abs{\Delta \varphi_i(s,t)}^2 \d t \d s\\
        =&\textstyle \frac 1 {\intrinsicDistance^{\a + 2}} \iint_{I_\g^2} \abs{\langle D_\g \g(t), D_\g \eta_i(t)\rangle - \langle D_\g \g(s), D_\g \eta_i(s)\rangle}^2 \d t \d s\\
        =&\textstyle \frac 1 {\intrinsicDistance^{\a + 2}} \iint_{I_\g^2} \abs{\langle D_\g \g(t), D_\g \eta_i(t) - D_\g \eta_i(s)\rangle + \langle D_\g \g(t) - D_\g \g(s), D_\g \eta_i(s)\rangle}^2 \d t \d s\\
        \le&\textstyle \frac 2 {\intrinsicDistance^{\a + 2}} \iint_{I_\g^2} \abs{\langle D_\g \g(t), D_\g \eta_i(t) - D_\g \eta_i(s)\rangle}^2 + \abs{\langle D_\g \g(t) - D_\g \g(s), D_\g \eta_i(s)\rangle}^2 \d t \d s\\
        \le&\textstyle 2 \norm[L^\infty]{D_\g \g}^2 \frac 1 {\intrinsicDistance^{\a + 2}} \iint_{I_\g^2} \abs{\Delta D_\g \eta_i(s,t)}^2 \d t \d s + 2 \norm[L^\infty]{D_\g \eta_i}^2 \frac 1 {\intrinsicDistance^{\a + 2}} \iint_{I_\g^2} \abs{\Delta D_\g \g (s,t)}^2 \d t \d s.
    \end{align*}
    Each summand in the last line fits the pattern of \thref{lemma:mainTechnicalEstimateOHara}, so we are finished with bounding \eqref{theorem:C1OHara:eq:F23}.
    
    In dealing with \eqref{theorem:C1OHara:eq:F24}, again the important technique is creative rewriting.
    In this case, we sum up two terms which differ only by exchanging $\eta_1$ and $\eta_2$:
    \begin{align*}
        &\textstyle\frac {\a (\a + 2)} {2} \paren*{ \langle \frac {\Delta \g} {\intrinsicDistance}, \frac {\Delta \eta_1} {\intrinsicDistance}\rangle + \frac {G_1(\g; \eta_1)} {\intrinsicDistance}} \cdot \frac 1 {\intrinsicDistance^{\a + 2}} \paren{\langle \Delta \g, \Delta \eta_2\rangle - \intrinsicDistance G_1(\g;\eta_2)}\\
        +&\textstyle\frac {\a (\a + 2)} {2} \paren*{ \langle \frac {\Delta \g} {\intrinsicDistance}, \frac {\Delta \eta_2} {\intrinsicDistance}\rangle + \frac {G_1(\g; \eta_2)} {\intrinsicDistance}} \cdot \frac 1 {\intrinsicDistance^{\a + 2}} \paren{\langle \Delta \g, \Delta \eta_1\rangle - \intrinsicDistance G_1(\g;\eta_1)}\\
        =&\textstyle \a (\a + 2) \frac 1 {\intrinsicDistance^{\a + 4}} \paren[\big]{ \langle \Delta \g, \Delta \eta_1\rangle \langle \Delta \g, \Delta \eta_2\rangle - \intrinsicDistance^2 G_1(\g;\eta_1) G_2(\g;\eta_2)} = \eqref{theorem:C1OHara:eq:F24}.
    \end{align*}
    The first factor of each summand on the left-hand side is bounded above by $2 \frac {\norm[L^\infty]{\eta_i'}} {v_\g}$, because of the upcoming
   estimate \eqref{lemma:multilinearIntegral:eq:estDiffQuot} and the fact that
    $
        \textstyle
        \abs{G_1(\g;\eta_i)} = \abs[\Big]{\int_{I_\g} \langle \g'(t), \frac {\eta_i'(t)} {\abs{\g'(t)}} \rangle \d t} \\\le \frac {\norm[L^\infty]{\eta_i'}} {v_\g} \intrinsicDistance.
    $
    The second factor of each summand on 
    the left-hand side is $L^p$-bounded because of \thref{lemma:boundedDifference} and thus, also \eqref{theorem:C1OHara:eq:F24} is $L^p$-bounded.
    
    Now let us really prove that $F_1$ is the Fréchet-derivative.
    \begin{claim}
        \label{theorem:C1OHara:claim:frechet}
        $F_1(\g, \eta)$ is the Fréchet-derivative of $\eap(\g)$ in direction $\eta \in \SobSpace(\R/\Z,\R^n)$.
    \end{claim}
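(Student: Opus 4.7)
The plan is to establish Fréchet differentiability by combining a pointwise second-order Taylor expansion with integral remainder, Minkowski's integral inequality, and the quadratic bound from \thref{theorem:C1OHara:claim:boundSecondVariation}. The map $\eta \mapsto F_1(\g;\eta)$ is linear by its explicit formula, and \thref{theorem:C1OHara:claim:boundFirstVariation} already supplies the required operator bound, so $F_1(\g;\cdot)$ is at once identified as the candidate bounded linear operator from $\SobSpace(\R/\Z,\R^n)$ into $L^p((\R/\Z)^2)$.

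For the core step I would fix $\g \in \SobSpaceir(\R/\Z,\R^n)$ and use the openness of $\SobSpaceir$ in $\SobSpace$ (\thref{lem:open}) together with the embedding $\SobSpace \hookrightarrow C^1$ to choose $r > 0$ so small that, for every $\eta$ with $\norm[\SobSpace]{\eta} < r$ and every $\tau \in [0,1]$, the curve $\g_\tau := \g + \tau\eta$ still belongs to $\SobSpaceir$ and satisfies $v_{\g_\tau} \ge v_\g/2$. For $(x,y) \in \Sigma$ the interval $I_{\g_\tau}(x,y)$ is locally constant in $\tau$, so the same argument used above to establish existence of the $G_k$ (and thereby of the $F_k$) shows that $\tau \mapsto \ea(\g_\tau)(x,y)$ is of class $C^2$ on $[0,1]$ with first and second derivatives $F_1(\g_\tau;\eta)(x,y)$ and $F_2(\g_\tau;\eta,\eta)(x,y)$. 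Taylor's theorem with integral remainder then yields, pointwise on $\Sigma$ and hence almost everywhere on $(\R/\Z)^2$,
\[
\ea(\g+\eta) - \ea(\g) - F_1(\g;\eta) = \int_0^1 (1-\tau)\, F_2(\g_\tau;\eta,\eta)\,\d\tau .
\]

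The conclusion will then follow quickly: Minkowski's integral inequality together with \thref{theorem:C1OHara:claim:boundSecondVariation} gives
\[
\norm[L^p]{\ea(\g+\eta) - \ea(\g) - F_1(\g;\eta)} \le \int_0^1 (1-\tau)\, \Xi(\g_\tau)\,\d\tau \cdot \norm[\SobSpace]{\eta}^2 ,
\]
and since $\Xi$ depends continuously on its argument while $\{\g_\tau : \tau \in [0,1]\}$ is compact in $\SobSpaceir$, the prefactor is bounded by a constant $M = M(\g)$, uniformly over $\norm[\SobSpace]{\eta} < r$. The right-hand side is therefore $O(\norm[\SobSpace]{\eta}^2) = o(\norm[\SobSpace]{\eta})$ as $\norm[\SobSpace]{\eta} \to 0$, proving Fréchet differentiability. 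I expect the only step needing genuine care to be the pointwise $C^2$-regularity of $\tau \mapsto \ea(\g_\tau)(x,y)$ and the identification of its two derivatives; however, the local constancy of $I_{\g_\tau}$ together with the integrable majorants already extracted from \thref{lemma:boundGeometricDerivative} during the discussion preceding \thref{theorem:C1OHara:claim:boundFirstVariation} reduce this to a routine differentiation-under-the-integral argument, so no serious obstacle is anticipated.
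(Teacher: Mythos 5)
There is a genuine gap. Your argument hinges on choosing a single radius $r>0$ so that the pointwise Taylor expansion
$\ea(\g+\eta)-\ea(\g)-F_1(\g;\eta)=\int_0^1(1-\tau)F_2(\g_\tau;\eta,\eta)\,\d\tau$
holds for \emph{all} $(x,y)\in\Sigma$ once $\norm[\SobSpace]{\eta}<r$. That is false: the neighbourhood of $\g$ on which $\tilde\g\mapsto I_{\tilde\g}(x,y)$ is constant depends on $(x,y)$ and degenerates as $\intrinsicDistance(x,y)\to\LL(\g)/2$. For every nonzero $\eta$, however small, there is a nonempty set of pairs $(x,y)$ for which the shortest arc between $\g_\tau(x)$ and $\g_\tau(y)$ switches sides at some $\tau\in[0,1]$; at such a switch $\tau\mapsto\intrinsicDistance[\g_\tau](x,y)$ has a kink, $\tau\mapsto\ea(\g_\tau)(x,y)$ fails to be differentiable, and the integral-remainder formula does not hold. "Locally constant in $\tau$" on the open set where $I_{\g_\tau}$ is well-defined does not give constancy on all of $[0,1]$, because the switch happens exactly where $I_{\g_\tau}$ ceases to be well-defined.

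The paper's proof is built around precisely this obstruction: it splits $\Sigma=U(\eta_1)\cup V(\eta_1)$, runs your Taylor argument only on the good set $U(\eta_1)$, and then spends the bulk of the work on the bad set $V(\eta_1)$ — namely \thref{theorem:C1OHara:claim:integrandLipschitz} (a Lipschitz bound for $\g\mapsto\ea(\g)$ in $L^\infty$, uniform in $(x,y)\in V(\eta_1)$, which also controls $|F_1|$ there as a limit of difference quotients) and \thref{theorem:C1OHara:claim:Vsmall} ($|V(\eta_1)|\le \frac{6}{\BiLip(\g)}\norm[L^\infty]{\eta_1'}$). Combining the two contributions yields only the rate $\norm[L^p]{\ea(\g+\eta_1)-\ea(\g)-F_1(\g;\eta_1)}\le\widetilde C(\g)\norm[\SobSpace]{\eta_1}^{1+\frac1p}$ rather than your claimed $O(\norm[\SobSpace]{\eta}^2)$ — still $o(\norm[\SobSpace]{\eta})$ and hence sufficient, but the degradation shows the bad set is not a removable technicality. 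Your treatment of the good set (Minkowski plus \thref{theorem:C1OHara:claim:boundSecondVariation} and the continuity of $\Xi$) is fine and matches the paper's in substance; what is missing is the entire analysis of $V(\eta_1)$.
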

    From now on, fix $\g \in \SobSpaceir(\R/\Z, \R^n)$ and $\epsilon > 0$ such that for all $\eta$ with $\norm[\SobSpace]{\eta} \\< \epsilon$ one has not only that
    $\g + \eta\in
    \smash{\SobSpaceir}(\R/\Z,\R^n)$, but also  the following:
    \begin{align*}
        \label{theorem:C1OHara:eq:conditionBoundSecondVariation}
        \numberthis
        \Xi(\g + \eta) &< \textstyle 2 \Xi(\g)\\
        \label{theorem:C1OHara:eq:conditionLipschitzBound}
        \numberthis
        \norm[L^\infty]{\eta'} &< \textstyle \min\set*{\frac {\BiLip(\g)} 2, \frac {v_\g} {12}, \LL(\g)}.
    \end{align*}
    Furthermore, let $\g_t := \g + t \eta_1$ and decompose $\Sigma = U(\eta_1) \cup V(\eta_1)$ with
    \begin{align*}
        U(\eta_1) &:= \set[\text{for all $t \in {[0,1]}$, } I_{\g_t}(x,y) = I_{\g}(x,y)]{(x,y) \in \Sigma} \text{ and }\\
        V(\eta_1) &:= \set[\text{there exists } t \in {[0,1]} \text{ such that } I_{\g_t}(x,y) \ne I_{\g}(x,y)]{(x,y) \in \Sigma}.
    \end{align*}
    On $U(\eta_1)$, $\ea(\g_t)$ is differentiable with respect to $t$ because $\intrinsicDistance[\g_t]$ is differentiable as long as $I_{\g_t}$ is fixed.
    By Taylor's theorem, we have for $(x,y) \in U(\eta_1)$
    \begin{align*}
        \textstyle
        \abs{\ea(\g + \eta_1) - \ea(\g) - F_1(\g; \eta_1)}
        = \abs[\big]{\int_0^1 (1-t) F_2(\g_t; \eta_1, \eta_1) \d t},
    \end{align*}
    so taking the $L^p$-norm on $U(\eta_1)$ yields, with Jensen's inequality, Tonelli's variant of Fubini's theorem, 
    \thref{theorem:C1OHara:claim:boundSecondVariation} and \eqref{theorem:C1OHara:eq:conditionBoundSecondVariation}:
    \begin{equation*}
        \numberthis
        \label{theorem:C1OHara:claim:frechet:U}
        \begin{split}
            &\textstyle\iint_{U(\eta_1)} \abs{\ea(\g + \eta) - \ea(\g) - F_1(\g; \eta_1)}^p \d y \d y\\
            \le &\textstyle \iint_{U(\eta_1)} \paren[\big]{\int_0^1 (1-t) \abs{F_2(\g_t; \eta_1, \eta_1)} \d t}^p \d y \d x\\
            \le &\textstyle\int_0^1 (1-t)^p \iint_{U(\eta_1)} \abs{F_2(\g_t; \eta_1, \eta_1)}^p \d y \d x \d t
            \le \int_0^1 (1-t)^p \Xi(\g_t)^{p} \norm[\SobSpace]{\eta_1}^{2p} \\
            \le &\textstyle 2^{p} \Xi(\g)^{p} \int_0^1 (1-t)^p \d t  \norm[\SobSpace]{\eta_1}^{2p}
            \le 2^p \Xi(\g)^{p} \norm[\SobSpace]{\eta_1}^{2p}
        \end{split}
    \end{equation*}
    Instead of trying to show that the same holds true on the ``bad'' set $V(\eta_1)$, we will show in \thref{theorem:C1OHara:claim:integrandLipschitz} that there, $\ea(\g)(x,y)$ is locally Lipschitz continuous with respect to $\g$ and has a Lipschitz constant that is uniform in $(x,y)$.
    If that is the case,
    \[
        \textstyle
        \abs{F_1(\g; \eta_1)(x,y)}
        = \lim_{h \to 0} \abs[\big]{\frac {\ea(\g + h \eta_1)(x,y) - \ea(\g)(x,y)} {h}}
        \le L_{\ea} \norm[\SobSpace]{\eta_1}
    \]
    and consequently
    \begin{align*}
        &\textstyle\iint_{V(\eta_1)} \abs{\ea(\g + \eta_1) - \ea(\g) - F_1(\g; \eta_1)}^p \d y \d x\\
        \le &\textstyle C(p) \iint_{V(\eta_1)} \abs{\ea(\g + \eta_1) - \ea(\g)}^p + \abs{F_1(\g; \eta_1)}^p \d y \d x\\
        \le &\textstyle C(p) \iint_{V(\eta_1)}  2 L_{\ea}^p \norm[\SobSpace]{\eta_1}^p \d y \d x.
    \end{align*}
    If we can furthermore show that $\abs{V(\eta_1)} \le C(\g) \norm[\SobSpace]{\eta_1}$, see \thref{theorem:C1OHara:claim:Vsmall}, we obtain
    \[
        \textstyle
        \iint_{V(\eta_1)} \abs{\ea(\g + \eta_1) - \ea(\g) - F_1(\g; \eta_1)}^p \d y \d x
        \le 2 C(p)C(\g) L_{\ea}^p \norm[\SobSpace]{\eta_1}^{p+1}
    \]
    and can use this together with \eqref{theorem:C1OHara:claim:frechet:U} to show that
    $
        \textstyle
        \norm[L^p]{\ea(\g + \eta_1) - \ea(\g) - F_1(\g; \eta_1)} \le \widetilde C(\g) \norm[\SobSpace]{\eta_1}^{1 + \frac 1 p},
    $
    which is enough for Fréchet-differentiability.
    
    \begin{claim}
        \label{theorem:C1OHara:claim:integrandLipschitz}
        There is a constant $L_{\ea}=L_{\ea}(\a, \g, n, p)$ such that
        \[
            \textstyle
            \norm[L^\infty]{\ea(\g + \eta) - \ea(\g)} \le L_{\ea} \norm[\SobSpace]{\eta}
        \]
        for all $\norm[\SobSpace]{\eta} < \epsilon$.
        Here,
       \begin{equation}\label{L} 
            \textstyle
            L_{\ea}(\g) = C_E(\a, n ,p) \paren[\Big]{\a \paren[\Big]{\frac {\LL(\g)} {3\norm[L^\infty]{\g'}}}^{-\a}\paren*{\frac {\BiLip(\g)} 2}^{-\a -1} + \a \paren*{\frac {\LL(\g)} {6}}^{-\a -1} \frac 2 {v_\g} \LL(\g)}.
       \end{equation} 
    \end{claim}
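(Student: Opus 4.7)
My plan is to establish the pointwise Lipschitz estimate by treating the two terms in $\ea$ separately and applying the mean value theorem to $s \mapsto s^{-\a}$. The main ingredients are the Sobolev embedding $\SobSpace \hookrightarrow C^1$ from part (ii) of \thref{prop:embedding}, which yields $\norm[L^{\infty}]{\eta'} \lesssim \norm[\SobSpace]{\eta}$, and the smallness condition \eqref{theorem:C1OHara:eq:conditionLipschitzBound}, which ensures that the linear interpolation $\g_t := \g + t\eta$ preserves $\BiLip(\g_t) \ge \BiLip(\g)/2$ and $v_{\g_t} \ge v_\g/2$ for all $t\in[0,1]$. Consequently $\abs{\Delta\g_t(x,y)}\ge (\BiLip(\g)/2)\abs{x-y}_{\R/\Z}$ and $\intrinsicDistance[\g_t](x,y)\ge (v_\g/2)\abs{x-y}_{\R/\Z}$ uniformly in $t$.

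Next, I decompose $\ea(\g+\eta)(x,y) - \ea(\g)(x,y)$ into the difference of the Euclidean-distance contribution $\abs{\Delta(\g+\eta)}^{-\a}-\abs{\Delta\g}^{-\a}$ and the intrinsic-distance contribution $\intrinsicDistance[\g+\eta]^{-\a} - \intrinsicDistance^{-\a}$. The mean value theorem bounds the first by $\a\xi^{-\a-1}\abs{\Delta\eta}$ with $\xi \ge (\BiLip(\g)/2)\abs{x-y}_{\R/\Z}$ from the stability step, and $\abs{\Delta\eta}\le \norm[L^{\infty}]{\eta'}\abs{x-y}_{\R/\Z}$. Combined with the lower bound $\abs{x-y}_{\R/\Z} \ge \LL(\g)/(3\norm[L^{\infty}]{\g'})$ available on the relevant portion of $\Sigma$, this produces precisely the first summand of $L_\ea$. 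The intrinsic-distance term is handled analogously: the mean value theorem gives $\a\zeta^{-\a-1}\abs{\intrinsicDistance[\g+\eta]-\intrinsicDistance}$ with $\zeta \ge (v_\g/2)\abs{x-y}_{\R/\Z}$ and $\abs{\intrinsicDistance[\g+\eta]-\intrinsicDistance}\le \norm[L^{\infty}]{\eta'}\LL(\g)$, and bounding $\abs{x-y}_{\R/\Z}$ below by $\LL(\g)/6$ yields the second summand of $L_\ea$, with $C_E(\a,n,p)$ absorbing the Sobolev embedding constant and other universal factors.

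The main obstacle is the apparent $\abs{x-y}_{\R/\Z}^{-\a}$ blow-up of each of the two naive mean value bounds as $(x,y)$ approaches the diagonal of $(\R/\Z)^2$. This is reconciled with the uniform pointwise estimate by observing that the claim is invoked in the ensuing proof of \thref{theorem:C1OHara} only on the exceptional subset $V(\eta_1)$ of $\Sigma$, on which the shorter arc of $\g_t$ changes with $t$; this geometric condition forces $\abs{x-y}_{\R/\Z}$ to be bounded away from zero by a constant depending only on $\g$. The thresholds $\LL(\g)/(3\norm[L^{\infty}]{\g'})$ and $\LL(\g)/6$ appearing in $L_\ea$ encode exactly this restriction, so that the potentially singular factor is absorbed by constants depending solely on $\g$.
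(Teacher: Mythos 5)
Your proposal follows essentially the same route as the paper: split $e_\a$ into its Euclidean and intrinsic parts, apply the mean value theorem to $s\mapsto s^{-\a}$, use the smallness condition \eqref{theorem:C1OHara:eq:conditionLipschitzBound} to keep the intermediate points controlled, and --- exactly as the paper does --- resolve the apparent diagonal singularity by noting that the estimate is only needed on $V(\eta_1)$, where $\abs{x-y}_{\R/\Z}$ and $d_\g$ are bounded below by $\g$-dependent constants that account for the thresholds appearing in $L_{e_\a}$. The one step you gloss over is the Lipschitz estimate $\abs{d_{\g+\eta}-d_\g}\le \tfrac{2}{v_\g}\norm[L^\infty]{\eta'}\LL(\g)$ in the case where the minimising arc changes (the case that by definition occurs on $V(\eta_1)$), to which the paper devotes the bulk of its proof; also, your lower bound for the intermediate point in the intrinsic term should come from $d_{\g+t\eta}(x,y)\ge\LL(\g)/6$ (obtained from $d_\g\ge\LL(\g)/3$ and the Lipschitz estimate) rather than from ``$\abs{x-y}_{\R/\Z}\ge\LL(\g)/6$'', which is false in general since $\abs{x-y}_{\R/\Z}\le\tfrac12$.
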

    The key ingredient is the local Lipschitz continuity of the intrinsic distance $\intrinsicDistance$ as a mapping from $\SobSpaceir(\R/\Z,\R^n)$ to $L^\infty((\R/\Z)^2)$.
    Since $\Sigma$ has full measure, it suffices to look at $(x,y) \in \Sigma$ and differentiate between two cases:
    The first case is $I_\g = I_{\tilde \g}$ (setting $\tilde \g= \g + \eta$ with $\norm[\SobSpace]{\eta} < \epsilon$).
    Then we may simply compute
    \[
        \textstyle
        \abs{\intrinsicDistance[\tilde \g](x,y) - \intrinsicDistance(x,y)} 
        = \abs[\big]{\int_{I_\g} \abs{\tilde \g'(t)} - \abs{\g'(t)} \d t}
        \le \abs{I_{\g}} \norm[L^\infty]{\eta'}
        \le \norm[L^\infty]{\eta'}.
    \]
    The other case is when the shortest connections do not match, i.e.\ $I_\g \ne I_{\tilde \g}$.
    In this case, we need more precise control of the integrands, so let us first find bounds for them.
    We know that $\abs{\abs{\tilde \g'(t)} - \abs{\g'(t)}} \le \norm[L^\infty]{\eta'}$ and so
    \[
        \textstyle
        \abs{\tilde \g'(t)} \ge \abs{\g'(t)} - \norm[L^\infty]{\eta'} \ge \abs{\g'(t)} - \frac {\abs{\g'(t)}} {v_\g} \norm[L^\infty]{\eta'}.
    \]
    Performing 
    the same estimates for an upper bound, we arrive at the fact that
    \[
        \textstyle
        \abs{\g'(t)}(1- \frac 1 {v_\g} \norm[L^\infty]{\eta'}) \le \abs{\tilde \g'(t)} \le \abs{\g'(t)}(1 + \frac 1 {v_\g} \norm[L^\infty]{\eta'}).
    \]
    Then, we use the fact that the connection between $\tilde \g(x)$ and $\tilde \g(y)$ via $I_\g$ is longer than the one via ${I_{\tilde \g}}$ to estimate
    \[
        \label{theorem:C1OHara:eq:estimateIntrinsicDistance}
        \numberthis
        \textstyle
        \int_{I_{\tilde \g}} \abs{\g'(t)}(1-\frac 1 {v_\g}\norm[L^\infty]{\eta'}) \d t 
        \le \int_{I_{\tilde \g}} \abs{\tilde \g'(t)} \d t 
        \le \int_{I_\g} \abs{\tilde \g'(t)} \d t 
        \le \int_{I_\g} \abs{\g'(t)} (1 + \frac 1 {v_\g}\norm[L^\infty]{\eta'}) \d t 
    \]
    which implies
    \[
        \label{theorem:C1OHara:eq:estimateIntrinsicDistance2}
        \numberthis
        \textstyle
         \int_{I_{\tilde \g}} \abs{\g'(t)}
         \le \int_{I_\g} \abs{\g'(t)} \d t + \int_{I_\g \cup I_{\tilde \g}} \frac 1 {v_\g}\norm[L^\infty]{\eta'} \abs{\g'(t)} \d t
         = \int_{I_\g} \abs{\g'(t)} \d t +  \frac 1 {v_\g}\norm[L^\infty]{\eta'} \LL(\g).
    \]
    Consequently, this time using the fact that ${I_{\tilde \g}}$ parametrises the longer connection between $\g(x)$ and $\g(y)$, we have
    \begin{align*}
        \textstyle
        \abs[\big]{\int_{I_{\tilde \g}} \abs{\g'(t)} - \int_{I_{\g}} \abs{\g'(t)} \d t}
        = \int_{I_{\tilde \g}} \abs{\g'(t)} \d t - \int_{I_\g} \abs{\g'(t)} \d t
        \stackrelx{\smash{\eqref{theorem:C1OHara:eq:estimateIntrinsicDistance2}}}{\le}
        \frac 1 {v_\g} \norm[L^\infty]{\eta'} \LL(\g).
    \end{align*}
    Furthermore, we can use the definition of $\tilde \g$ 
    and the fact that $1 \le \frac {\abs{\g'(t)}} {v_\g}$ to obtain
    \[
        \textstyle
\big|        \int_{I_{\tilde \g}} \abs{\tilde \g'(t)} - \abs{\g'(t)} \d t\big|
        \le \int_{I_{\tilde \g}} \abs{\abs{\tilde \g'(t)} - \abs{\g'(t)}} \d t
        \le \int_{I_{\tilde \g}} \norm[L^\infty]{\eta'} \frac 1 {v_\g} \abs{\g'(t)} \d t
        \le  \frac 1 {v_\g} \norm[L^\infty]{\eta'} \LL(\g).
    \]
    Combining the last two estimates yields
    \[
        \label{theorem:C1OHara:eq:lipschitzIntrinsicDistance}
        \numberthis
        \begin{split}
            \abs{\intrinsicDistance[\tilde \g](x,y) - \intrinsicDistance(x,y)}
            &= \textstyle \abs[\big]{\int_{I_{\tilde \g}} \abs{\tilde \g'(t)} \d t - \int_{I_\g} \abs{\g'(t)} \d t}\\
            &\le \textstyle \abs[\big]{\int_{I_{\tilde \g}} \abs{\tilde \g'(t)} - \abs{\g'(t)} \d t} + \abs[\big]{\int_{I_{\tilde \g}} \abs{\g'(t)} \d t - \int_{I_\g} \abs{\g'(t)} \d t}\\
            &\le \textstyle \frac 2 {v_\g} \norm[L^\infty]{\eta'} \LL(\g),
        \end{split}
    \]
    so we have proven the local Lipschitz property of $\intrinsicDistance$.
    
    To apply this result to $\ea$, we first provide 
    a simple local Lipschitz estimate for $x \mapsto x^{-\a}$.
    Assume $x,x+h>0$, then
    \[
        \label{theorem:C1OHara:eq:lipschitzxa}
        \numberthis
        \begin{split}
            \textstyle\abs{x^{-\a} - (x+h)^{-\a}}
            &= \textstyle \abs[\big]{h \int_0^1 - \a (x+th)^{-\a - 1} \d t}
            \le \abs{\a} \max\set{\abs{x}^{-\a - 1}, \abs{x+h}^{-\a - 1}} \abs{h}
            .
        \end{split}    
    \]
    Rewriting $\ea$ a bit, we obtain
    \begin{align*}
        \textstyle
        \abs{\ea(\tilde \g)(x,y)-\ea(\g)(x,y)}
        &= \textstyle \abs[\Big]{\frac 1 {\abs{\Delta \tilde \g}^\a} - \frac 1 {\intrinsicDistance[\tilde \g]^\a} - \frac 1 {\abs{\Delta \g}^\a} + \frac 1 {\intrinsicDistance[\g]^\a}}\\
        \label{theorem:C1OHara:eq:rhsLipschitzea}
        \numberthis
        &\le \textstyle \abs{x-y}_{\R/\Z}^{-\a} \abs[\Big]{\paren*{\frac {\abs{x-y}_{\R/\Z}} {\abs{\Delta \tilde \g}}}^\a - \paren*{\frac {\abs{x-y}_{\R/\Z}}{\abs{\Delta \g}}}^\a} + \abs[\Big]{\frac {1}{\intrinsicDistance[\g]^\a}- \frac {1}{\intrinsicDistance[\tilde \g]^\a}}. 
    \end{align*}
    In order to use \eqref{theorem:C1OHara:eq:lipschitzxa}, we need to make sure that the $\tilde \g$-terms do not veer too far from their $\g$-counterparts.
    For the $\Delta$-terms, consider that for all $k \in \Z$, we have
    \begin{align*}
        \textstyle
        \abs[\big]{\frac {\Delta \g} {\abs{x-y}_{\R/\Z}} - \frac {\Delta \tilde \g} {\abs{x-y}_{\R/\Z}}}
        &= \textstyle \abs[\big]{\int_0^1 \tilde \g'(x+t(y + k -x)) - \g'(x+t(y + k -x)) \d t} \frac {\abs{y+k - x}} {\abs{y-x}_{\R/\Z}}\\
        &\le \textstyle \norm[L^\infty]{\eta'} \frac {\abs{y+k - x}} {\abs{y-x}_{\R/\Z}}.
    \end{align*}
    Taking the minimum over all $k$, the fraction on the right-hand side becomes $1$ and so we obtain $\norm[L^\infty]{\eta'}$ as an upper bound.
    By \eqref{theorem:C1OHara:eq:conditionLipschitzBound}, this means that $\abs*{\frac {\Delta \tilde \g} {\abs{x-y}_{\R/\Z}}} \ge \abs*{\frac {\Delta \g} {\abs{x-y}_{\R/\Z}}} - \frac 1 2 \BiLip(\g) \ge \frac 1 2 \BiLip(\g)$, so the first part of \eqref{theorem:C1OHara:eq:rhsLipschitzea} is bounded above by 
    \[
        \label{theorem:C1OHara:eq:boundExtrinsicPartEA}
        \numberthis
        \textstyle
        \abs{x-y}_{\R/\Z}^{-\a} \a \paren*{\frac {\BiLip(\g)} 2}^{-\a -1} \norm[L^\infty]{\eta'}.
    \]
    In order to apply \eqref{theorem:C1OHara:eq:lipschitzxa} to the $\intrinsicDistance$-terms of \eqref{theorem:C1OHara:eq:rhsLipschitzea}, we need a lower bound for $\intrinsicDistance[\tilde \g](x,y)$.
    To achieve this, assume for the moment that $\intrinsicDistance(x,y) \ge \frac {\LL(\g)} 3$, which we will prove  in \eqref{theorem:C1OHara:eq:lowerBoundsDistancesBadSet}.
    Then, by \eqref{theorem:C1OHara:eq:lipschitzIntrinsicDistance} and \eqref{theorem:C1OHara:eq:conditionLipschitzBound},
    \begin{align*}
        \label{theorem:C1OHara:eq:lowerBoundDTildeg}
        \textstyle
        \intrinsicDistance[\tilde \g](x,y)
        &= \textstyle \intrinsicDistance(x,y) - \paren{\intrinsicDistance(x,y) - \intrinsicDistance[\tilde \g](x,y)}
        \ge \intrinsicDistance(x,y) - \frac 2 {v_\g} \norm[L^\infty]{\eta'} \LL(\g)\\
        &\ge \textstyle \frac {\LL(\g)} {3} - \frac 2 {v_\g} \norm[L^\infty]{\eta'} \LL(\g)
       \ge \frac {\LL(\g)} {6}.
    \end{align*}
    Applying \eqref{theorem:C1OHara:eq:lipschitzxa} and \eqref{theorem:C1OHara:eq:lipschitzIntrinsicDistance}, we obtain that the second part of \eqref{theorem:C1OHara:eq:rhsLipschitzea} is bounded above by
    \[
        \label{theorem:C1OHara:eq:boundIntrinsicPartEA}
        \numberthis
        \textstyle
        \a \paren[\Big]{\frac {\LL(\g)} {6}}^{-\a -1} \frac 2 {v_\g} \norm[L^\infty]{\eta'} \LL(\g).
    \]
    
    The last thing we need for Lipschitz continuity of $\ea$ on $V(\eta_1)$ is that $x$ and $y$ cannot get too close.
    The tuple $(x,y)$ is in $V(\eta_1)$ if and only if $I_{\g}(x,y) \ne I_{\g_t}(x,y)$, so it suffices to establish that this cannot happen when $\abs{x-y}_{\R/\Z}$ is small.
    
    Assume $\intrinsicDistance(x,y) \le \frac {\LL(\g)} {3}$.
    We will show that this is impossible thus establishing
    \eqref{theorem:C1OHara:eq:lowerBoundsDistancesBadSet} below, since
    \[
        \textstyle
        \abs{x-y}_{\R/\Z} 
        = \abs[\big]{\int_x^y \frac {\abs{\g'(t)}} {\abs{\g'(t)}} \d t}
        \ge \frac 1 {\norm[L^\infty]{\g'}}\abs[\big]{\int_x^y \abs{\g'(t)} \d t}
        \ge \frac 1 {\norm[L^\infty]{\g'}} \intrinsicDistance(x,y).
    \]
         Under our assumption
    and by means of
    \eqref{theorem:C1OHara:eq:estimateIntrinsicDistance}
 (recall that $\tilde \g = \g + \eta$) and \eqref{theorem:C1OHara:eq:conditionLipschitzBound},
    \begin{align*}
        \textstyle
        \int_{I_\g} \abs{\g'(t) + \eta'(t)} \d t 
        \le \int_{I_\g} \abs{\g'(t)} \d t \paren[\big]{1 +\frac {\norm[L^\infty]{\eta'}} {v_\g}}
        \le \frac {\LL(\g)} {3} \paren[\big]{1 +\frac {\norm[L^\infty]{\eta'}} {v_\g}}
        < \frac {\LL(\g)} {2}.
    \end{align*}
    Thus, $I_\g = I_{\tilde \g}$ for all $\tilde \g \in B_\epsilon(\g)$, so $(x,y) \notin V(\eta_1)$ a contradiction. Thus,
    \[
        \label{theorem:C1OHara:eq:lowerBoundsDistancesBadSet}
        \numberthis
        \textstyle
        \intrinsicDistance(x,y) > \frac {\LL(\g)} {3} \text{ and } \abs{x-y}_{\R/\Z} > \frac {\LL(\g)} {3 \norm[L^\infty]{\g'}} \text{ for all } (x,y) \in V(\eta_1).
    \]
    Combining the 
    upper bounds \eqref{theorem:C1OHara:eq:boundExtrinsicPartEA} 
    and \eqref{theorem:C1OHara:eq:boundIntrinsicPartEA} for the 
    right-hand side of \eqref{theorem:C1OHara:eq:rhsLipschitzea}
    with \eqref{theorem:C1OHara:eq:lowerBoundsDistancesBadSet}, we obtain
    \begin{align*}
        & \textstyle \abs{x-y}_{\R/\Z}^{-\a} \a \paren*{\frac {\BiLip(\g)} 2}^{-\a -1} + \a \paren*{\frac {\LL(\g)} {6}}^{-\a -1} \frac 2 {v_\g} \LL(\g)\\
        \overset{\smash{\eqref{theorem:C1OHara:eq:lowerBoundsDistancesBadSet}}}{<} & \textstyle \a \paren*{\frac {\LL(\g)} {3\norm[L^\infty]{\g'}}}^{-\a}\paren*{\frac {\BiLip(\g)} 2}^{-\a -1} + \a \paren*{\frac {\LL(\g)} {6}}^{-\a -1} \frac 2 {v_\g} \LL(\g)
        =: \widetilde{L}_{\ea}(\g)
    \end{align*}
    as Lipschitz constant for $\g \mapsto \ea(\g)$ as a mapping from $W^{1,\infty}$ to $L^\infty$.
    Finally, using the embedding from \thref{prop:embedding} (ii) to estimate $\norm[L^\infty]{\eta'}$ by $C_E(\a, n, p)\norm[\SobSpace]{\eta}$ (with $k_1=k_2 = 1$, $s_1 = \frac {\a p - 1}{2p} -1$, $\rho_1 = 2p$ and some 
    $\mu\in (0,s_1-\frac1{\rho_1})$) yields
    $L_{\ea}(\a, \g, n, p) = C_E(\a, n ,p) \widetilde{L}_{\ea}(\g)$.
    
    To wrap up the proof of \thref{theorem:C1OHara:claim:frechet}, we need that $V(\eta_1)$ is small.    
    \begin{claim}
        \label{theorem:C1OHara:claim:Vsmall}
        $\abs{V(\eta_1)} \le \frac {6} {\BiLip(\g)} \norm[L^\infty]{\eta_1'}.$
    \end{claim}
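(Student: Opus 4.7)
The plan is to show that $(x,y) \in V(\eta_1)$ forces the intrinsic distance $\intrinsicDistance(x,y)$ to lie extremely close to its maximal value $\LL(\g)/2$, and then to bound the Lebesgue measure of such near-antipodal slices using the bi-Lipschitz property of $\g$. The route to the first reduction is via the intermediate value theorem applied to a suitable difference of arc-length functions; the route to the measure estimate is the standard chain ``bi-Lipschitz chord $\le$ intrinsic distance $\le$ one-sided arc length''.

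Concretely, I would fix $(x,y) \in V(\eta_1)$, write $I_1 := I_\g(x,y)$ and let $I_2$ denote the complementary parameter interval, so that $|I_1| + |I_2| = 1$. Next I would introduce $L_i(t) := \int_{I_i} |\g'(s) + t\eta_1'(s)|\,ds$, note $L_1(0) = \intrinsicDistance(x,y) \le L_2(0)$ and $L_1(0) + L_2(0) = \LL(\g)$, and observe that because $I_{\g_\tau}(x,y) \ne I_\g(x,y)$ for some $\tau \in [0,1]$, the shorter-arc role has already switched, so $L_1(\tau) \ge L_2(\tau)$. By continuity of $t \mapsto L_1(t) - L_2(t)$ and the intermediate value theorem, $L_1(t^*) = L_2(t^*)$ for some $t^* \in [0,\tau]$. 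The reverse triangle inequality yields $|L_i(t^*) - L_i(0)| \le \norm[L^\infty]{\eta_1'}\,|I_i|$, so summing across $i = 1,2$ gives
\[
\LL(\g) - 2\intrinsicDistance(x,y) = L_2(0) - L_1(0) \le \norm[L^\infty]{\eta_1'}.
\]

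For the measure bound, I would fix $x \in \R/\Z$ and define $h(y) := \int_x^y |\g'(s)|\,ds$, which is continuous and strictly increasing from $[x,x+1]$ onto $[0,\LL(\g)]$ and satisfies $\intrinsicDistance(x,y) = \min(h(y), \LL(\g) - h(y))$. The $x$-slice of $V(\eta_1)$ is therefore contained in $\{y : |h(y) - \LL(\g)/2| \le \norm[L^\infty]{\eta_1'}/2\}$. The chain
\[
\BiLip(\g)\,|y_1 - y_2|_{\R/\Z} \le |\g(y_1) - \g(y_2)| \le \intrinsicDistance(y_1,y_2) \le |h(y_1) - h(y_2)|
\]
shows that $h^{-1}$ is $\BiLip(\g)^{-1}$-Lipschitz on the slice (the smallness condition \eqref{theorem:C1OHara:eq:conditionLipschitzBound} ensures the slice has diameter $< 1/2$, so $|\cdot|_{\R/\Z}$ coincides with $|\cdot|$ on it). Hence each slice has Lebesgue measure at most $\norm[L^\infty]{\eta_1'}/\BiLip(\g)$, and Fubini's theorem gives $|V(\eta_1)| \le \norm[L^\infty]{\eta_1'}/\BiLip(\g)$, comfortably inside the claimed bound $6\norm[L^\infty]{\eta_1'}/\BiLip(\g)$.

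The main obstacle is producing $t^*$ rigorously, since $I_{\g_t}(x,y)$ is ill-defined precisely at the crossover instant when both arcs of $\g_t$ have equal length. The clean workaround is to argue entirely with the continuous function $t \mapsto L_1(t) - L_2(t)$ on the fixed intervals $I_1, I_2$ and invoke the intermediate value theorem, so that one never has to evaluate $I_{\g_t}$ at $t^*$ itself. Once this continuity step is handled, the remaining arguments are elementary.
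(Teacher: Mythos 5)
Your argument is correct and follows the same geometric strategy as the paper's: show that membership in $V(\eta_1)$ forces $(x,y)$ to be nearly antipodal, then bound each $x$-slice by locating $y$ near a single antipodal point $\tilde x$ and invoking the bi-Lipschitz estimate. The execution of the first step is genuinely different and worth noting. The paper works with the perturbed curve: it fixes the $t$ at which the switch has occurred, derives $\intrinsicDistance[\g_t](x,y)\ge \tfrac12\LL(\g)-\norm[L^\infty]{\eta_1'}$ from the fact that $I_{\g_t}$ is the \emph{longer} arc of $\g$, and then measures the distance of $y$ to the $\g_t$-antipode $\tilde x(x,t)$ of $x$ --- a point that in principle depends on $t$ and hence on $y$, a dependence the final Fubini step does not dwell on. Your intermediate value argument on $L_1(t)-L_2(t)$ converts the switching hypothesis into the clean, $t$-free inequality $\LL(\g)-2\intrinsicDistance(x,y)\le\norm[L^\infty]{\eta_1'}$ for the unperturbed curve, so your reference point $\tilde x$ (defined by $h(\tilde x)=\LL(\g)/2$) depends only on $x$; as a bonus you get the sharper constant $1/\BiLip(\g)$ in place of $6/\BiLip(\g)$. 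Your handling of the ill-defined crossover instant via continuity of $L_1-L_2$ is exactly right.

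One small point to tighten in your measure estimate: the slice is the interval $h^{-1}\big(\big[\tfrac{\LL(\g)}{2}-\tfrac12\norm[L^\infty]{\eta_1'},\,\tfrac{\LL(\g)}{2}+\tfrac12\norm[L^\infty]{\eta_1'}\big]\big)\subset[x,x+1]$, whose Lebesgue measure is the \emph{Euclidean} distance of its endpoints $y_\pm$, whereas the bi-Lipschitz chain only controls their $\R/\Z$-distance; a subset of $[x,x+1]$ of small $\R/\Z$-diameter could a priori be an interval of Euclidean length close to $1$. This is repaired in one line: $y_+-y_-\le v_\g^{-1}\big(h(y_+)-h(y_-)\big)=v_\g^{-1}\norm[L^\infty]{\eta_1'}<\tfrac12$ by \eqref{theorem:C1OHara:eq:conditionLipschitzBound}, after which the two distances coincide and your bound stands. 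Alternatively, note that every point of the slice lies in the $\R/\Z$-ball of radius $\norm[L^\infty]{\eta_1'}/(2\BiLip(\g))<\tfrac14$ about $\tilde x$, which has measure $\norm[L^\infty]{\eta_1'}/\BiLip(\g)$ --- the same device the paper uses with its larger radius $3\norm[L^\infty]{\eta_1'}/\BiLip(\g)$.
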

    For $(x,y) \in V(\eta_1)$, there is $t \in [0,1]$ such that the intrinsic distance is parametrised over ${I_{\tilde \g}}$ instead of $I_\g$, so let us take a closer look at the corresponding integral.
    The fundamental theorem of calculus yields
    $
        \textstyle
        \int_{I_{\tilde \g}} \abs{\g_t'(s)} \d s
        = \int_{I_{\tilde \g}} \abs{\g'(s)} + t \int_0^1 \langle D_{\g_{\tau t}} \g_{\tau t}(s), \eta_1'(s)\rangle \d \tau \d s
    $
    and so, for the fitting $t$,
    \begin{align*}
        \textstyle
        \intrinsicDistance[\g_t](x,y)
        &= \textstyle \int_{I_{\tilde \g}} \abs{\g_t'(s)} \d s
        \ge \int_{I_{\tilde \g}} \abs{\g'(s)}  - t \abs[\big]{\int_0^1 \langle D_{\g_{\tau t}} \g_{\tau t}(s), \eta_1'(s)\rangle \d \tau} \d s \\
        &\ge \textstyle \int_{I_{\tilde \g}} \abs{\g'(s)} \d s - \norm[L^\infty]{\eta_1'}
        \ge \frac {\LL(\g)} {2} - \norm[L^\infty]{\eta_1'}.
    \end{align*}
    Note that for each $(x,y) \in V(\eta_1)$, there is exactly one $\tilde x = \tilde x (x,t)$ such that $\intrinsicDistance[\g_t](x,\tilde x) = \frac 1 2 \LL(\g_t)$ and because it does not matter which arc of $\g_t$ we travel through to get from $\g_t(x)$ to $\g_t(\tilde x)$, $\intrinsicDistance[\g_t](x,\tilde x) = \intrinsicDistance[\g_t](x, y) + \intrinsicDistance[\g_t](\tilde x, y)$.
    Thus, $ \intrinsicDistance[\g_t](x,y) = \frac 1 2 \LL(\g_t) - \intrinsicDistance[\g_t](\tilde x, y)$.
    Consequently,
    $
        \textstyle
        \norm[L^\infty]{\eta_1'} 
        \ge \frac 1 2 \LL(\g) - \intrinsicDistance[\g_t](x,y)
        = \frac 1 2 \paren{\LL(\g) - \LL(\g_t)} + \intrinsicDistance[\g_t](\tilde x, y)
    $
    and so
    \[
        \textstyle
        \intrinsicDistance[\g_t](\tilde x, y)
        \le \norm[L^\infty]{\eta_1'} + \frac 1 2 \paren{\LL(\g_t) - \LL(\g)}
        = \norm[L^\infty]{\eta_1'} + \frac 1 2 \int_0^1 \abs{\g_t'(s)} - \abs{\g'(s)} \d s
        \le \frac 3 2 \norm[L^\infty]{\eta_1'}.
    \]
    By \eqref{theorem:C1OHara:eq:conditionLipschitzBound}, $\intrinsicDistance[\g_t](\tilde x, y) \ge v_{\g_t} \abs{\tilde x - y}_{\R/\Z} \ge \frac 1 2 \BiLip(\g) \abs{\tilde x - y}_{\R/\Z}$, and therefore
    \[
        \label{theorem:C1OHara:eq:distanceTildexy}
        \numberthis
        \textstyle
        \abs{\tilde x - y}_{\R/\Z} \le \frac {3} {\BiLip(\g)}
	\norm[L^\infty]{\eta_1'}
    \]
    for $(x,y) \in V(\eta_1)$, $\tilde x=\tilde x(x)$.
    Let us set $V_1(x) := \set[(x,y) \in V(\eta_1)]{y \in [0,1]}$ and $B_r^{\R/\Z}(x) := \set[\abs{x-y}_{\R/\Z} < r]{y \in [0,1]}$.
    Note that $\abs{B_r^{\R/\Z}(x)} = \abs{B_r^{\R/\Z}(\frac 1 2)} \le \abs{B_r(\frac 1 2)} = 2r$.
    We may use this in combination with \eqref{theorem:C1OHara:eq:distanceTildexy} to estimate 
    \begin{align*}
       \textstyle
       \abs{V(\eta_1)}
       &= \textstyle \int_0^1 \abs{V_1(x)} \d x
       \le \int_0^1 \abs{B^{\R/\Z}_{\frac {3} {\BiLip(\g)}\norm[L^\infty]{\eta_1'}}(\tilde x(x))} \d x
       = \int_0^1 \frac {6} {\BiLip(\g)} \norm[L^\infty]{\eta_1'} \d x\\
       &= \textstyle \frac {6} {\BiLip(\g)} \norm[L^\infty]{\eta_1'}.
    \end{align*}

    Next, we prove that $F_1$ is continuous in $\g$.
    \begin{claim}
        \label{theorem:C1OHara:claim:C1}
        The mapping
        \[
            \textstyle
            \SobSpaceir(\R/\Z, \R^n) \to \LL\paren[\Big]{\SobSpace(\R/\Z, \R^n), L^p((\R/\Z)^2)}, \g \mapsto F_1(\g, \cdot)
        \]
        is continuous. 
    \end{claim}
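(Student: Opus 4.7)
The plan is to mimic the proof of \thref{theorem:C1OHara:claim:frechet}, but with the increment $\zeta := \tilde\g - \g$ now playing the role previously taken by $\eta_1$ while the test direction $\eta$ is held fixed. Fix $\g \in \SobSpaceir(\R/\Z,\R^n)$ and consider $\tilde\g = \g + \zeta$ with $\norm[\SobSpace]{\zeta}$ small enough that conditions \eqref{theorem:C1OHara:eq:conditionBoundSecondVariation} and \eqref{theorem:C1OHara:eq:conditionLipschitzBound} hold with $\zeta$ in place of $\eta_1$. I then want to show that $\sup_{\norm[\SobSpace]{\eta}\le 1} \norm[L^p((\R/\Z)^2)]{F_1(\tilde\g;\eta)-F_1(\g;\eta)} \to 0$ as $\norm[\SobSpace]{\zeta}\to 0$, by decomposing $\Sigma = U(\zeta)\cup V(\zeta)$ exactly as in the proof of \thref{theorem:C1OHara:claim:frechet}.

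On the good set $U(\zeta)$, the shortest-arc parametrisation $I_{\g+t\zeta}$ is constant in $t\in[0,1]$, so the map $t\mapsto F_1(\g+t\zeta;\eta)(x,y)$ is continuously differentiable with derivative $F_2(\g+t\zeta;\eta,\zeta)(x,y)$. The fundamental theorem of calculus, Minkowski's integral inequality, \thref{theorem:C1OHara:claim:boundSecondVariation} and \eqref{theorem:C1OHara:eq:conditionBoundSecondVariation} combine to yield $\norm[L^p(U(\zeta))]{F_1(\tilde\g;\eta)-F_1(\g;\eta)}\le 2\,\Xi(\g)\norm[\SobSpace]{\eta}\norm[\SobSpace]{\zeta}$, which is of order $\norm[\SobSpace]{\zeta}$ uniformly in $\eta$ satisfying $\norm[\SobSpace]{\eta}\le 1$.

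On the bad set $V(\zeta)$ the lower bounds \eqref{theorem:C1OHara:eq:lowerBoundsDistancesBadSet} provide $(x,y)$-uniform positive lower bounds for $\intrinsicDistance(x,y)$ and $|x-y|_{\R/\Z}$; the Lipschitz estimate \eqref{theorem:C1OHara:eq:lipschitzIntrinsicDistance} together with a bi-Lipschitz comparison (as in the step preceding \eqref{theorem:C1OHara:eq:lowerBoundDTildeg}) carry these bounds, with at most a factor of two worse constants, to $\intrinsicDistance[\tilde\g]$ and $|\Delta\tilde\g|$. Combined with the embedding $\SobSpace\hookrightarrow W^{1,\infty}$ from \thref{prop:embedding}, these imply a pointwise bound $|F_1(\g_*;\eta)(x,y)|\le M(\g)\norm[\SobSpace]{\eta}$ for $\g_*\in\{\g,\tilde\g\}$ and a.e.\ $(x,y)\in V(\zeta)$. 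Using the measure bound $|V(\zeta)|\le \tfrac{6}{\BiLip(\g)}\norm[L^\infty]{\zeta'}$ from \thref{theorem:C1OHara:claim:Vsmall} and the embedding once more, I arrive at $\norm[L^p(V(\zeta))]{F_1(\tilde\g;\eta)-F_1(\g;\eta)}\le \tilde C(\g)\norm[\SobSpace]{\zeta}^{1/p}$. Adding both contributions and taking the supremum over admissible $\eta$ then finishes the argument.

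The main technical point is the pointwise $L^\infty$-bound on $V(\zeta)$: one has to track the various lower bounds on $|\Delta\g|$, $|\Delta\tilde\g|$, $\intrinsicDistance(x,y)$ and $\intrinsicDistance[\tilde\g](x,y)$ and verify that each remains uniformly positive on $V(\zeta)$ under small $\SobSpace$-perturbations of $\g$. Everything needed for this is already present in the proof of \thref{theorem:C1OHara:claim:frechet}, so the argument amounts to a reorganisation of the Fréchet-differentiability computation rather than a new estimate.
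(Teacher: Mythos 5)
Your proposal is correct and follows essentially the same route as the paper: the paper also fixes the test direction, splits $\Sigma$ into $U(\eta_1)\cup V(\eta_1)$ for the perturbation $\eta_1$ of $\g$, controls the good set via the fundamental theorem of calculus and the $F_2$-bound of \thref{theorem:C1OHara:claim:boundSecondVariation} together with \eqref{theorem:C1OHara:eq:conditionBoundSecondVariation}, and controls the bad set via the uniform pointwise bound $\abs{F_1(\g_*;\eta)}\le L_{\ea}\norm[\SobSpace]{\eta}$ (from \thref{theorem:C1OHara:claim:integrandLipschitz}) combined with the measure estimate of \thref{theorem:C1OHara:claim:Vsmall}, arriving at the same $O(\norm[\SobSpace]{\eta_1}^{1/p})$ contribution. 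The only cosmetic differences are your use of Minkowski's integral inequality in place of Jensen plus Tonelli on the good set, and deriving the pointwise bound on the bad set directly from the distance lower bounds rather than quoting the Lipschitz constant $L_{\ea}$; both yield identical estimates.
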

    It suffices to show that $\norm[L^p((\R/\Z)^2)]{F_1(\g + \eta_1;\eta_2) - F_1(\g; \eta_2)} \le \norm[\SobSpace]{\eta_2} o(1)$ as $\eta_1 \to 0$, the Landau symbol being uniform in $\eta_2$.
    Let us once again split the domain of integration into $U(\eta_1)$ and $V(\eta_1)$.
    On the former, we can use that $t \mapsto F_1(\g_t; \eta_2)$ is differentiable, as well as Jensen's inequality, Tonelli's variant of Fubini's theorem, \thref{theorem:C1OHara:claim:boundSecondVariation} and \eqref{theorem:C1OHara:eq:conditionBoundSecondVariation}:
    \begin{align*}
        &
        \textstyle
        \iint_{U(\eta_1)} \abs{F_1(\g + \eta_1; \eta_2) - F_1(\g, \eta_2)}^p \d y \d x
        = \iint_{U(\eta_1)} \abs{\int_0^1 F_2(\g_t; \eta_2; \eta_1)\d t}^p \d y \d x\\ \textstyle
        \le & \textstyle \iint_{U(\eta_1)} \int_0^1 \abs{F_2(\g_t; \eta_2; \eta_1)}^p \d t \d y \d x
        = \int_0^1 \iint_{U(\eta_1)} \abs{F_2(\g_t; \eta_2; \eta_1)}^p  \d y \d x \d t\\ \textstyle
        \le & \textstyle \Xi^p(\g_t) 
	\norm[\SobSpace]{\eta_1}^p \norm[\SobSpace]{\eta_2}^p \d t
        \le (2 \Xi(\g))^p \norm[\SobSpace]{\eta_1}^p \norm[\SobSpace]{\eta_2}^p.
    \end{align*}
    On $V(\eta_1)$ we may use \thref{theorem:C1OHara:claim:integrandLipschitz} and \thref{theorem:C1OHara:claim:Vsmall}.
    Note that because of 
    \eqref{theorem:C1OHara:eq:conditionLipschitzBound} 
    and \eqref{L}, 
    $L_{\ea}(\g + \eta) \le C L_{\ea}(\g)$ for all $\norm[\SobSpace]{\eta} < \epsilon$ and we can thus estimate
    \begin{align*}
    & \textstyle \iint_{V(\eta_1)} \abs{F_1(\g + \eta_1; \eta_2) - F_1(\g, \eta_2)}^p \d y \d x \\
        \le & \textstyle
	C(p) \iint_{V(\eta_1)} \abs{F_1(\g + \eta_1; \eta_2)}^p + \abs{F_1(\g, \eta_2)}^p \d y \d x\\
        \le & \textstyle
	C(p) (C^p+1) L_{\ea}^p \frac 6 {\BiLip(\g)} \norm[L^\infty]{\eta_1'} \norm[\SobSpace]{\eta_2}^p.
    \end{align*}
\end{proof}

\begin{lemma}
    \label{lemma:geometricNormC1}
    Let $p>1$.
    Then, the map
    \begin{align*}
        \textstyle 
        \norm[L^p_\cdot((\R/\Z)^2)]{\cdot}^p \colon W^{1, \infty}(\R/\Z, \R^n) \times L^p((\R/\Z)^2) &\to  \R,\\
        (\g,g) &\mapsto \textstyle \iint_{(\R/\Z)^2} \abs{g(x,y)}^p \abs{\g'(x)} \abs{\g'(y)} \d y \d x
    \end{align*}
    is continuously differentiable.
\end{lemma}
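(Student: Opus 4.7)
The plan is to factor $F(\g, g) := \iint_{(\R/\Z)^2} |g(x,y)|^p |\g'(x)||\g'(y)| \d x \d y$ through a bounded trilinear form: observe that $F(\g, g) = T(|g|^p, |\g'|, |\g'|)$ for
\[ T \colon L^1((\R/\Z)^2) \times L^\infty(\R/\Z) \times L^\infty(\R/\Z) \to \R, \quad T(a,b,c) := \iint a(x,y) b(x) c(y) \d x \d y, \]
which is bounded by Hölder's inequality and therefore of class $C^\infty$. By the chain rule it then suffices to show that both the Nemytskii operator $N \colon L^p \to L^1$, $g \mapsto |g|^p$, and the speed map $H \colon W^{1,\infty}(\R/\Z,\R^n) \to L^\infty(\R/\Z)$, $\g \mapsto |\g'|$, are of class $C^1$.

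For the Nemytskii part the hypothesis $p > 1$ is essential: the scalar function $t \mapsto |t|^p$ is $C^1$ on $\R$ with derivative $p|t|^{p-2}t$ obeying the growth bound $|t|^{p-1}$. The candidate derivative $DN[g]h := p |g|^{p-2} g \cdot h$ is bounded from $L^p$ into $L^1$ by Hölder's inequality, while continuity of $g \mapsto |g|^{p-2}g$ from $L^p$ into $L^{p/(p-1)}$ is a standard continuity theorem for Nemytskii operators. Fréchet-differentiability then follows from pointwise application of the mean-value theorem combined with dominated convergence.

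For the speed map, the candidate Fréchet derivative at $\g$ in direction $\eta$ is $DH[\g]\eta := \langle \g', \eta'\rangle/|\g'|$. A second-order Taylor expansion of $v \mapsto |v|$ about $\g' \ne 0$ yields the pointwise remainder estimate
\[ \bigl| |\g'(x) + \eta'(x)| - |\g'(x)| - \langle \g'(x), \eta'(x)\rangle/|\g'(x)| \bigr| \le \tfrac{|\eta'(x)|^2}{|\g'(x)|}, \]
from which Fréchet-differentiability of $H$ in the $L^\infty$-topology at a regular $\g$ follows, along with the continuity of $\g \mapsto DH[\g]$. Combining this with the Nemytskii step via the trilinear chain rule then produces the claimed $C^1$-regularity of $F$.

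The principal delicacy lies in the non-smoothness of $v \mapsto |v|$ at the origin, which localises the analysis of $H$ to the open set of regular curves. This causes no difficulty in the application, since the lemma is composed with $\g \in \SobSpaceir$ in the proof of \thref{theorem:C1OHara}, where the minimal velocity $v_\g$ is strictly positive.
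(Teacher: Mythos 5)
Your proof is correct, and it takes a genuinely different route from the paper's. The paper computes the joint Gateaux derivative of $(\g,g)\mapsto\iint\abs{g}^p\abs{\g'(x)}\abs{\g'(y)}$ directly -- difference quotient, fundamental theorem of calculus in an auxiliary parameter, an explicit integrable majorant, dominated convergence -- and then verifies continuity of the resulting derivative, invoking ``continuous Gateaux implies $C^1$''. You instead factor the functional as a bounded trilinear form $T$ on $L^1\times L^\infty\times L^\infty$ precomposed with the Nemytskii operator $N(g)=\abs{g}^p$ and the speed map $H(\g)=\abs{\g'}$, and conclude by the chain rule; this isolates the only two nontrivial ingredients (Krasnoselskii continuity of $u\mapsto\abs{u}^{p-2}u$ from $L^p$ to $L^{p/(p-1)}$, and the second-order Taylor remainder for $v\mapsto\abs{v}$) and reproduces the paper's derivative formula with less computation. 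The one substantive difference in scope: your treatment of $H$ only yields differentiability on the open set of $W^{1,\infty}$-curves with $\essinf\abs{\g'}>0$, so strictly you prove the lemma on that open subset times $L^p$ rather than on all of $W^{1,\infty}\times L^p$ as literally stated. You flag this, and it is indeed all that the application in \thref{theorem:C1OHara} requires, since there $\g$ and its admissible perturbations lie in $\SobSpaceir$, which embeds into the regular $C^1$-curves. In fact the restriction is unavoidable: at a curve whose derivative vanishes on a set of positive measure but not almost everywhere the functional fails to be Gateaux differentiable (the one-sided difference quotients disagree), and the paper's own proof tacitly assumes $\g'\ne 0$ a.e.\ when passing to the limit in the term $\langle\g'+th\tilde\g',h\tilde\g'\rangle/\abs{\g'+th\tilde\g'}$; so your version is, if anything, stated more carefully. (Minor quibble: your pointwise remainder bound for $\abs{v+w}$ holds with constant $1$ only once $\abs{w}\le\abs{v}/2$, say; in general one gets $2\abs{w}^2/\abs{v}$, which changes nothing.)
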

\begin{proof}
    We 
    prove continuous Gateaux-differentiability, which implies Fréchet-dif\-feren\-tia\-bi\-li\-ty (see e.g.\ \cite[Proposition~4.8]{Zeidler:1993:NonlinearfunctionalanalysisanditsapplicationsFixedpointtheorems}).
  Let $\g, \tilde \g \in W^{1,\infty}(\R/\Z, \R^n)$ and $g, \tilde g \in L^p((\R/\Z)^2)$.
    Then,
    \begin{align*}
        &\textstyle  \lim\limits_{h \to 0}\frac1{h}\big( \Lpg[\g + h \tilde \g]{g + h \tilde g}^p - \Lpg{g}^p\big)\\
        = &\textstyle  \lim\limits_{h \to 0} \frac 1 {h} \iint_{(\R/\Z)^2} \abs{(g + h \tilde g)(x,y)}^p \abs{\g'(x) + h \tilde \g'(x)} \abs{\g'(y) + h \tilde \g'(y)}\\
        &\textstyle \adjustedalignment{\textstyle \lim\limits_{h \to 0} \frac 1 {h} \iint_{(\R/\Z)^2}}{-} \abs{g(x,y)}^p \abs{\g'(x)} \abs{\g'(y)} \d y \d x \\
        = &\textstyle  \lim\limits_{h \to 0}  \iint_{(\R/\Z)^2} \int_0^1 \frac 1 {h} \frac {\d} {\d t} \paren{\abs{(g + t h \tilde g)(x,y)}^p \abs{\g'(x) + t h \tilde \g'(x)} \abs{\g'(y) + t h \tilde \g'(y)}} \d t \d y \d x. 
    \end{align*}
    Calculating and then bounding the derivative in the integrand, we obtain for $\abs{h} < 1$:
    \begin{align*}
        &\textstyle\frac 1 {\abs{h}} \Bigl\lvert \abs{g + th \tilde g}^{p-2} \langle g + t h \tilde g, h \tilde g\rangle \abs{(\g' + th \tilde \g')(x)} \abs{(\g' + th \tilde \g')(y)}\\
        &\textstyle \phantom{\frac 1 {\abs{h}}}+ \abs{g + th \tilde g}^p \sum_{(a,b) \in \set{(x,y), (y,x)}} \frac {\langle \g' + t h \tilde \g', h \tilde \g'\rangle} {\abs{\g' + t h \tilde \g'}} (a) \abs{\g' + t h \tilde \g'}(b) \Bigr\rvert\\
        \le &\textstyle \abs{g + t h \tilde g}^{p-1} \abs{\tilde g} (\norm[L^\infty]{\g'} + \norm[L^\infty]{\tilde \g'})^2 + 2  \abs{g + th \tilde g}^p \norm[L^\infty]{\tilde \g'} (\norm[L^\infty]{\g'} + \norm[L^\infty]{\tilde \g'})
    \end{align*}
    Estimating $\abs{g + t h \tilde g}^{\tilde p} \le C(\tilde p) (\abs{g}^{\tilde p} + \abs{\tilde g}^{\tilde p})$ and using Young's inequality with exponents $\frac {p} {p-1}$ and $p$ on the first term, we find an integrable upper bound
    \[
        \textstyle
        C(p) \paren{\abs{g}^p + \abs{\tilde g}^p}  \paren{1+ \norm[L^\infty]{\g'} + \norm[L^\infty]{\tilde \g'}} \paren {\norm[L^\infty]{\g'} + \norm[L^\infty]{\tilde \g'}}.
    \]
    By Lebesgue's theorem, we may interchange integration and taking the limit $h \to 0$ and thus arrive at
    \begin{align*}
        & \textstyle
        \delta \Lpg[\g]{g}^p(\tilde \g, \tilde g)
        = \lim_{h \to 0} \frac1{h}\big( \Lpg[\g + h \tilde \g]{g + h \tilde g}^p - \Lpg{g}^p \big)\\
        = &\textstyle \iint_{(\R/\Z)^2} \abs{g}^{p-2} \langle g, \tilde g\rangle \abs{\g'(x)} \abs{\g'(y)}+\abs{g}^p \sum_{(a,b) \in \set{(x,y), (y,x)}} \frac {\langle \g'(a), \tilde \g'(a)\rangle} {\abs{\g'(a)}} \abs{\g'(b)} \d y \d x.
    \end{align*}
    Hölder's inequality implies that this is a linear and bounded operator with respect to $(\tilde \g, \tilde g)$ and thus, we have found a Gateaux-derivative.
    To obtain $C^1$-regularity, consider the difference $\abs{(\delta \Lpg[\g_1]{g_1}^p - \delta \Lpg[\g_2]{g_2}^p)(\tilde \g, \tilde g)}$ and look at each summand on its own.
    The difference of the first summands is
    \begin{align*}
        &\textstyle \abs[\big]{\iint_{(\R/\Z)^2} \abs{g_1}^{p-2} \langle g_1, \tilde g\rangle \abs{\g_1'(x)} \abs{\g_1'(y)} - \abs{g_2}^{p-2} \langle g_2, \tilde g\rangle \abs{\g_2'(x)} \abs{\g_2'(y)} \d y \d x}\\
        \label{lemma:geometricNormC1:eq:C1Norm:Summand1}
        \numberthis
        \le & \textstyle \abs[\big]{\iint_{(\R/\Z)^2} \langle\abs{g_1}^{p-2} g_1 - \abs{g_2}^{p-2} g_2, \tilde g \rangle \abs{\g_1'(x)} \abs{\g_1'(y)} \d y \d x}\\
        \label{lemma:geometricNormC1:eq:C1Norm:Summand2}
        \numberthis
        &\textstyle + \abs[\big]{\iint_{(\R/\Z)^2} \langle\abs{g_2}^{p-2} g_2, \tilde g \rangle (\abs{\g_2'(x)} \abs{\g_2'(y)} - \abs{\g_1'(x)} \abs{\g_1'(y)}) \d y \d x}.
    \end{align*}
    The first term may be bounded above by using Hölder's inequality, yielding
    \[
        \textstyle
        \paren[\big]{\iint_{(\R/\Z)^2} \abs{g_1 \abs{g_1}^{p-2} - g_2 \abs{g_2}^{p-2}}^{\frac {p} {p-1}} \d y \d x}^{\frac {p-1} {p}} \norm[L^p]{\tilde g} \norm[L^\infty]{\g_1} \norm[L^\infty]{\g_2}.
    \]
    As 
    \[
        \textstyle
        \abs[\big]{g_1 \abs{g_1}^{p-2} - g_2 \abs{g_2}^{p-2}}^{\frac {p} {p-1}} 
        \le C(p)(\abs{g_1 \abs{g_1}^{p-2}}^{\frac {p} {p-1}} + \abs{g_2 \abs{g_2}^{p-2}}^{\frac {p} {p-1}})
        = C(p) (\abs{g_1}^p + \abs{g_2}^p),
    \]
    we have an integrable majorant and can thus apply Lebesgue's theorem to show that \eqref{lemma:geometricNormC1:eq:C1Norm:Summand1} goes to $0$ as $g_2 \to g_1$.
    
    The convergence of \eqref{lemma:geometricNormC1:eq:C1Norm:Summand2} to $0$ follows from Hölder's inequality and the $W^{1, \infty}$-convergence of $\g_2 \to \g_1$.
\end{proof}

\begin{lemma}
    \label{lemma:multilinearIntegral}
    Let $\a > 0$, $p \ge 1$, $2 < \a p < 2p + 1$, $k \in \N$, $\varphi \in \R$, $\psi > 0$, and $\g \in \SobSpaceir(\R/\Z,\R^n)$.
    Furthermore, let $b \colon (\R^n)^k \to \R$ be a $k$-multilinear map and 
    \begin{multline*}
        \textstyle
        B^{\varphi, \psi}_\g: (\SobSpace(\R/\Z, \R^n))^k \to \R,\\
        \textstyle (\eta_1, \ldots, \eta_k) \mapsto \iint_{(\R/\Z)^2} \abs[\Big]{b(L_1 \eta_1, \ldots, L_k \eta_k) \frac {\intrinsicDistance(x,y)^{\varphi + \psi + 2 - \a}} {\abs{\Delta \g}^\varphi} \cdot \paren[\Big]{\frac 1 {\abs{\Delta \g}^{2 + \psi}} - \frac 1 {\intrinsicDistance^{2 + \psi}}}}^p \d y \d x
    \end{multline*}
    where $L_iu \in \set{\frac {\Delta} {\intrinsicDistance}, \eta \mapsto D_\g \eta(x), \eta \mapsto D_\g \eta(y)}$.
    Then, $B_\g^{\varphi, \psi}$ is well-defined and
    \[
        \textstyle
        \abs{B_\g^{\varphi, \psi}(\eta_1, \ldots, \eta_k)} \le \Xi(\g) \norm[\LL^k(\R^n,\R)]{b}^p \prod_{i=1}^{k} \norm[L^\infty]{\eta_i'}^p.
    \]
    Here, $\Xi(\g)>0$ continuously depends on $\g$ with respect to the $W^{1 + \frac {\a p - 1} {2p}, 2p}$-norm.
\end{lemma}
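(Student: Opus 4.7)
The plan is to separate the multilinear and geometric parts of the integrand and dominate the latter pointwise by the integrand $\ea$ of $\Eap$. By multilinearity,
\[
\textstyle
\abs{b(L_1\eta_1,\ldots,L_k\eta_k)} \le \norm[\LL^k(\R^n,\R)]{b}\prod_{i=1}^k \abs{L_i\eta_i},
\]
and each admissible operator satisfies $\abs{L_i\eta_i} \le v_\g^{-1}\norm[L^\infty]{\eta_i'}$: for $D_\g\eta_i(z)=\eta_i'(z)/\abs{\g'(z)}$ this is immediate, and for $L_i u = \Delta u/\intrinsicDistance$ one combines $\abs{\Delta\eta_i} \le \norm[L^\infty]{\eta_i'}\abs{x-y}_{\R/\Z}$ with $\intrinsicDistance \ge v_\g\abs{x-y}_{\R/\Z}$. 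Pulling these factors out of the integral reduces the task to showing that the purely geometric integral
\[
\textstyle
J(\g) := \iint_{(\R/\Z)^2} \abs[\Big]{\frac{\intrinsicDistance^{\varphi+\psi+2-\a}}{\abs{\Delta\g}^\varphi}\paren[\Big]{\frac{1}{\abs{\Delta\g}^{2+\psi}}-\frac{1}{\intrinsicDistance^{2+\psi}}}}^p \d y \d x
\]
is finite and depends continuously on $\g$ in the $\SobSpace$-norm.

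Introducing $r := \abs{\Delta\g}/\intrinsicDistance$, the integrand of $J$ rewrites as $\intrinsicDistance^{-\a p} r^{-\varphi p}(r^{-(2+\psi)}-1)^p$. Since $\g$ is injective and regular, $r \in [B, 1]$ with $B := \BiLip(\g)/\norm[L^\infty]{\g'} > 0$, so $r^{-\varphi}$ is bounded above by $\max(1,B^{-\varphi})$. Writing $r^{-\sigma}-1 = \sigma\int_r^1 t^{-\sigma-1}\d t$ for $\sigma \in \{2+\psi, \a\}$ and noting that the ratio of the two integrands is $t^{\a-2-\psi}$, bounded on $[B,1]$ by $\max(1,B^{\a-2-\psi})$, gives the key comparison
\[
\textstyle
r^{-(2+\psi)}-1 \le \tfrac{2+\psi}{\a}\max(1,B^{\a-2-\psi})\,(r^{-\a}-1).
\]
Combining these estimates produces the pointwise bound $\abs{F} \le C(\g,\a,\varphi,\psi)\,\ea(\g;x,y)$ on the geometric factor $F$, whence after raising to the $p$-th power and inserting the trivial inequality $v_\g^{-2}\abs{\g'(x)}\abs{\g'(y)} \ge 1$ one obtains $J(\g) \le C(\g)\,v_\g^{-2}\,\Eap(\g)$. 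The right-hand side is finite by Blatt's embedding \cite[Theorem~1.1]{Blatt:2012:BoundednessandRegularizingEffectsofOHarasKnotEnergies}, whose proof in fact bounds $\Eap(\g)$ in terms of $\norm[\SobSpace]{\g}$, $v_\g$ and $\BiLip(\g)$.

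The continuous dependence of the resulting constant $\Xi(\g)$ on $\g$ in the $\SobSpace$-topology then follows because $v_\g$, $\norm[L^\infty]{\g'}$ and $\BiLip(\g)$ are all controlled via the continuous embedding $\SobSpace\hookrightarrow C^1$ from \thref{prop:embedding}(ii), with $\BiLip$ being lower semicontinuous under $C^1$-convergence (which is sufficient since it only enters as a lower bound). The main technical point is the uniform comparison of $r^{-(2+\psi)}-1$ with $r^{-\a}-1$ for the full admissible range of $(\varphi,\psi)$; once the integral mean value representation reduces this to bounding $t^{\a-2-\psi}$ on a compact subinterval of $(0,1]$, the remainder of the argument is bookkeeping.
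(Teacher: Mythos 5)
Your proof is correct in its main thrust but takes a genuinely different route from the paper's. The paper factors the geometric part as $\tfrac12\,\zeta\big(\intrinsicDistance/\abs{\Delta\g}\big)\,\intrinsicDistance^{-(2+\a)}\big(2\intrinsicDistance^2-2\abs{\Delta\g}^2\big)$ with $\zeta(r)=r^{2+\varphi}\tfrac{r^{2+\psi}-1}{r^2-1}$ bounded on the relevant compact range of the quotient, rewrites $2\intrinsicDistance^2-2\abs{\Delta\g}^2=\iint_{I_\g^2}\abs{\tau_\g(s)-\tau_\g(t)}^2\abs{\g'(s)}\abs{\g'(t)}\d t\d s$ in terms of the unit tangent, and then bounds the resulting double integral by the substitution-based \thref{lemma:mainTechnicalEstimateOHara}; this produces a fully explicit, manifestly continuous $\Xi(\g)$ in terms of $v_\g$, $\norm[L^\infty]{\g'}$, $\BiLip(\g)$ and $\norm[\SobSpaceD]{\g'}$, and that technical lemma is reused elsewhere (for \eqref{theorem:C1OHara:eq:F23} and in \thref{lemma:boundedDifference}), so the machinery is needed anyway. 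You instead dominate the geometric factor pointwise by the O'Hara integrand $\ea(\g;x,y)=\intrinsicDistance^{-\a}(r^{-\a}-1)$ via the integral-mean-value comparison of $r^{-(2+\psi)}-1$ with $r^{-\a}-1$ on $[B,1]$; that comparison is correct, as is the resulting bound $J(\g)\le C(B,\a,\varphi,\psi)^p v_\g^{-2}\Eap(\g)$, which is shorter and conceptually appealing. The one point you must shore up is the continuity of $\Xi$: since continuity of $\Eap$ on $\SobSpaceir$ is precisely what \thref{theorem:C1OHara} (where this lemma is used, e.g.\ via the requirement $\Xi(\g+\eta)<2\Xi(\g)$) is establishing, you cannot let $\Eap(\g)$ itself appear as a factor of $\Xi(\g)$; you must replace it by the quantitative upper bound extracted from the proof of Blatt's theorem. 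That bound is stated for arc-length parametrised curves, so for general regular $\g$ you additionally need reparametrisation invariance, the homogeneity, and composition estimates of the type of \thref{lemma:compositionSobolevFunction} to turn it into a continuous function of $\norm[\SobSpace]{\g}$, $v_\g$, $\norm[L^\infty]{\g'}$ and $\BiLip(\g)$ --- essentially the same substitution work the paper performs directly in \thref{lemma:mainTechnicalEstimateOHara}. With that step made explicit your argument closes; as written, the continuity claim rests on a quantitative statement that is only implicit in the cited reference.
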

\begin{remark}
    This as well as \thref{lemma:boundedDifference} and \thref{lemma:mainTechnicalEstimateOHara} are also valid for the case $\a p = 2$ if one considers the intersection $\SobSpace \cap W^{1,\infty}$.
    We do not make the effort to include this case as we do not want to deal with the more complicated space in \thref{theorem:C1OHara}.
\end{remark}
\begin{proof}
    Let us begin by bounding the $b$-term:
    $
        \textstyle
        \abs{b(L_1\eta_1, \ldots, L_k \eta_k)} \! \le \! \norm[\LL^k(\R^n,\R)]{b} \prod_{i=1}^{k} \abs{L_i \eta_i}.
    $ 
    All $\abs{L_i \eta_i}$ may be bounded above by $\norm[L^\infty]{D_\g \eta_i} \le \frac 1 {v_\g} \norm[L^\infty]{\eta_i'}$, which is clear for the second and third option for $L_i$.
    For the first one, we calculate
    \[
        \label{lemma:multilinearIntegral:eq:estDiffQuot}
        \numberthis
        \textstyle
     d_\g^{-1} {\abs{\Delta \eta}} 
        = d_\g^{-1} {\abs[\Big]{\int_{I_\g} \eta'(t) \frac {\abs{\g'(t)}} {\abs{\g'(t)}} \d t}} 
        \le d_\g^{-1} {\norm[L^\infty]{D_\g \eta} \abs[\big]{\int_{I_\g} \abs{\g'(t)} \d t}} 
        = \norm[L^\infty]{D_\g \eta}.
    \]
    
    We continue by defining a function $\zeta$ which helps reduce the integrand to its most relevant content:
    $\zeta \colon (0, \infty) \to \R, r \mapsto r^{2 + \varphi} \frac {r^{2 + \psi} - 1} {r^2 - 1}$.
    With this, we may rewrite the second part of the integrand:
    \[
        \textstyle
        \frac {\intrinsicDistance(x,y)^{\varphi + \psi + 2 - \a}} {\abs{\Delta \g}^\varphi} \cdot \left( \frac 1 {\abs{\Delta \g}^{2 + \psi}} - \frac 1 {\intrinsicDistance^{2 + \psi}} \right)
        = \frac 1 2 \zeta\paren*{\frac {\intrinsicDistance}{\abs{\Delta\g}}} \frac 1 {\intrinsicDistance^{2 + \a}} (2\intrinsicDistance^2 - 2\abs{\Delta \g}^2)
    \]    
    The last difference can be written in terms of the unit tangents $\tau_\g = \frac {\g'} {\abs{\g'}}$:
    \begin{align*}
        &\textstyle
        2 \intrinsicDistance^2 - 2 \abs{\Delta\g}^2\\ = &\textstyle \iint_{I_\g^2} (\abs{\tau_\g(s)}^2 + \abs{\tau_\g(t)}^2) \abs{\g'(s)} \abs{\g'(t)} \d t \d s - 2 \iint_{I_\g^2} \langle \tau_\g(s), \tau_\g(t)\rangle \abs{\g'(s)} \abs{\g'(t)} \d t \d s \\
        = &\textstyle \iint_{I_\g^2} \abs{\tau_\g(s) - \tau_\g(t)}^2 \abs{\g'(s)} \abs{\g'(t)} \d t \d s
    \end{align*}
    Let us show that $\zeta$ is bounded.
    By L'Hôpital's rule, it is continuous in $r=1$ and for all other $r$, too.
    
    Now, let us bound the argument of $\zeta$:
    Setting $I$ as the interval in $\R/\Z$ connecting $x$ and $y$ with length at most $\frac 1 2$ (so $\abs{I} = \abs{x-y}_{\R/\Z}$), we obtain that
    \[
        \textstyle
        \intrinsicDistance(x,y) = \int_{I_\g} \abs{\g'(s)} \d s \le \int_I \abs{\g'(s)} \d s \le \abs{I} \norm[L^\infty]{\g'} = \abs{x-y}_{\R/\Z}\norm[L^\infty]{\g'}
    \]
    and thus
    \[
        \textstyle
        \frac {\BiLip(\g)}{\norm[L^\infty]{\g'}} 
        \le \frac {\BiLip(\g) \abs{x-y}_{\R/\Z}}{\intrinsicDistance}
        \le \frac {\abs{\Delta \g}}{\intrinsicDistance}
        \stackrel{\smash{\eqref{lemma:multilinearIntegral:eq:estDiffQuot}}}{\le} \frac {\norm[L^\infty]{\g'}} {v_\g}.
    \]
    Consequently, we may bound $\abs{\zeta(\frac {\abs{\Delta \g}}{\intrinsicDistance})}$ in terms of a constant $C_\zeta$ that continuously depends on $\BiLip(\g), v_\g$ and $\norm[L^\infty]{\g'}$.
    It only remains to find an upper estimate for the $L^p$-Norm of
    \[
        \textstyle
        \frac 1 {\intrinsicDistance^{2 + \a}} \iint_{I_\g^2} \abs{\tau_\g(s) - \tau_\g(t)}^2 \abs{\g'(s)} \abs{\g'(t)} \d t \d s.
    \]
    Seeing as we may bound $\abs{\g'(\cdot)}$ by $\norm[L^\infty]{\g'}$, let us concentrate on the remaining integral, 
    \[
        \textstyle
        \iint_{(\R/\Z)^2} \paren[\Big]{\frac 1 {\intrinsicDistance^{2+\a}} \iint_{I_\g^2} \abs{\tau_\g(s) - \tau_\g(t)}^2 \d t \d s }^p \d y \d x
    \]
    which we can bound above by
    \[
        \textstyle
        C(\a, n, p) v_\g^{-(\a + 8) p - 2} \norm[L^\infty]{\g'}^2 \smash{\norm[W^{\frac {\a p - 1} {2p}, 2p}]{\g'}^{4p}}
    \]
    according to \thref{lemma:mainTechnicalEstimateOHara}.
    Combining our estimates, we arrive at
    \[
        \Xi(\g) = v_\g^{-(\a + 8) p - k - 2} C_\zeta(\BiLip(\g), v_\g, \norm[L^\infty]{\g'}) \norm[L^\infty]{\g'}^{2p + 2}C(\a, n, p) \smash{\norm[W^{\frac {\a p - 1} {2p}, 2p}]{\g'}^{4p}}.
    \]
    All the $\g$-dependent parameters of $\Xi$ are continuously dependent on $\g$ with respect to the $W^{1 + \frac {\a p - 1} {2p}, 2p}$-norm.
\end{proof}

\begin{lemma}
    \label{lemma:boundedDifference}
    Let $\a > 0$, $p \ge 1$, $2 < \a p < 2p + 1$ and $\g \in \SobSpaceir$.
    Then, the map
$        B_\g: (\SobSpace(\R/\Z, \R^n))^2 \to \R$ sending
$
        (\eta_1, \eta_2)$ to  the double integral
	$$
        \textstyle
        \iint_{(\R/\Z)^2} \bigl\lvert \paren[\big]{\frac 2 {\intrinsicDistance(x,y)} \int_{I_\g(x,y)} \langle D_\g \eta_1(s), D_\g \eta_2(s)\rangle \abs{\g'(s)} \d s - 2 \big< \frac {\Delta \eta_1} {\intrinsicDistance}, \frac {\Delta \eta_2} {\intrinsicDistance} \big>(x,y)} 
        \textstyle \cdot \frac 1 {\intrinsicDistance(x,y)^\a}\bigr\rvert^p \d y \d x
    $$
        is well-defined and  satisfies
    \[
        \textstyle
        \abs{B_\g(\eta_1,\eta_2)}
        \le C(\a, n, p)\norm[L^\infty]{\g'}^{2p+2} v_\g^{-(8+\a)p - 2} \norm[W^{\frac {\a p -1} {2p},2p}]{\g'}^{2p} \norm[W^{\frac {\a p -1} {2p},2p}]{\eta_1'}^{p} \norm[W^{\frac {\a p -1} {2p},2p}]{\eta_2'}^{p}.
    \]
\end{lemma}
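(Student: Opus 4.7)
The plan is to rewrite the integrand of $B_{\g}$ as a symmetric double integral in a form that directly matches the hypothesis of \thref{lemma:mainTechnicalEstimateOHara}, and then apply Cauchy--Schwarz twice (once pointwise, once in the outer integration) to split the dependence on $\eta_1$ and $\eta_2$.

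First I would exploit the following identity, valid for any two integrable vector fields $f,g\colon I_{\g}\to\R^n$:
\[
\textstyle
\iint_{I_\g^2} \langle f(s)-f(t),g(s)-g(t)\rangle\,|\g'(s)||\g'(t)|\,\d s\,\d t
= 2d_\g\!\int_{I_\g}\!\langle f,g\rangle|\g'|\,\d s - 2\bigl\langle \int_{I_\g}\! f|\g'|,\int_{I_\g}\! g|\g'|\bigr\rangle.
\]
Applied to $f=D_\g\eta_1$, $g=D_\g\eta_2$ and using $\int_{I_\g}\eta_i'=\Delta\eta_i$, the bracketed expression in the integrand of $B_\g$ becomes
\[
\textstyle
\frac{1}{d_\g^2}\!\iint_{I_\g^2}\!\langle D_\g\eta_1(s)-D_\g\eta_1(t),D_\g\eta_2(s)-D_\g\eta_2(t)\rangle |\g'(s)||\g'(t)|\,\d s\,\d t.
\]
Dividing by $d_\g^{\a}$, the integrand of $B_\g$ takes exactly the form of a weighted covariance divided by $d_\g^{2+\a}$, i.e.\ the structure required by the second moment estimate of \thref{lemma:mainTechnicalEstimateOHara}.

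Next I would apply the Cauchy--Schwarz inequality pointwise in $s,t$ and then in the $L^2$-sense on $I_\g\times I_\g$ with the weight $|\g'(s)||\g'(t)|$, pulling out $\|\g'\|_{L^\infty}^2$, to obtain
\[
\textstyle
|\text{integrand of }B_\g|\le
\frac{\|\g'\|_{L^\infty}^{2p}}{d_\g^{(2+\a)p}}\!
\left(\iint_{I_\g^2}\!|\Delta_{s,t} D_\g\eta_1|^2\d s\d t\right)^{\!p/2}\!
\left(\iint_{I_\g^2}\!|\Delta_{s,t}D_\g\eta_2|^2\d s\d t\right)^{\!p/2}\!.
\]
Integrating over $(\R/\Z)^2$ and applying the Cauchy--Schwarz inequality to the outer integral yields
\[
\textstyle
|B_\g(\eta_1,\eta_2)|\le \|\g'\|_{L^\infty}^{2p}\prod_{i=1}^2\!\left(\iint_{(\R/\Z)^2}\!\!\left(\tfrac{1}{d_\g^{2+\a}}\!\iint_{I_\g^2}\!|\Delta_{s,t} D_\g\eta_i|^2\d s\d t\right)^{\!p}\d y\d x\right)^{\!1/2}\!.
\]
Each factor now matches the hypothesis of \thref{lemma:mainTechnicalEstimateOHara} applied to $D_\g\eta_i$ in place of $\tau_\g$, yielding a bound of the form $C(\a,n,p)\,v_\g^{-(\a+8)p-2}\|\g'\|_{L^\infty}^2\|\g'\|_{W^{(\a p-1)/(2p),2p}}^{2p}\|\eta_i'\|_{W^{(\a p-1)/(2p),2p}}^{2p}$. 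Multiplying these two $1/2$-powers together and combining with the prefactor $\|\g'\|_{L^\infty}^{2p}$ produces $\|\g'\|_{L^\infty}^{2p+2}$ together with the stated powers of $v_\g$, $\|\g'\|_{W^{\cdot,\cdot}}$ and $\|\eta_i'\|_{W^{\cdot,\cdot}}$, finishing the proof.

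The main obstacle is the symmetrisation step: one must recognise that the difference between the averaged inner product and the inner product of averages is precisely a (weighted) variance-type quantity, so that the inner integral over $I_\g^2$ can be split by Cauchy--Schwarz into two independent factors involving only $\eta_1$ and only $\eta_2$. Once this identity is in place, the remaining estimates are the same Cauchy--Schwarz plus \thref{lemma:mainTechnicalEstimateOHara} pattern that already drives \thref{lemma:multilinearIntegral}.
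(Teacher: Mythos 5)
Your proposal is correct and follows essentially the same route as the paper: the symmetrisation identity you state is exactly the combination of the two identities the paper writes out for the averaged inner product and the inner product of the endpoint differences, and the subsequent pointwise plus outer Cauchy--Schwarz steps feeding into \thref{lemma:mainTechnicalEstimateOHara} (with $\beta=2+\a$) are identical, with matching exponent bookkeeping.
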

\begin{proof}
    Note that
    \begin{align*}
    & 
	\textstyle \frac 2 {\intrinsicDistance} \int_{I_\g} \langle D_\g \eta_1(s), D_\g \eta_2(s)\rangle \abs{\g'(s)} \d s
	\\
        = 
	& 
	\textstyle \frac 1 {\intrinsicDistance^2} \iint_{I_\g^2} \paren[\big]{\langle D_\g \eta_1(s), D_\g \eta_2(s)\rangle  + \langle D_\g \eta_1(t), D_\g \eta_2(t)\rangle} \abs{\g'(s)} \abs{\g'(t)} \d t \d s
    \end{align*}
    and 
    \begin{align*}
        & \textstyle 2 \left< \frac {\Delta \eta_1} {\intrinsicDistance}, \frac {\Delta \eta_2} {\intrinsicDistance} \right>\\
        = & \textstyle \frac 1 {\intrinsicDistance^2} \iint_{I_\g^2} \paren[\big]{\langle D_\g \eta_1(s), D_\g \eta_2(t)\rangle  + \langle D_\g \eta_1(t), D_\g \eta_2(s)\rangle} \abs{\g'(s)} \abs{\g'(t)} \d t \d s.
    \end{align*}
    With these identities, we may rewrite the integrand and then bound it via the Cauchy-Schwarz-inequality as follows:
    \begin{align*}
        & \textstyle \frac {1} {\intrinsicDistance^{(2+\a)p}} \paren[\big]{\iint_{I_\g^2} \langle \Delta D_\g \eta_1, \Delta D_\g \eta_2 \rangle(s,t) \abs{\g'(s)} \abs{\g'(t)} \d t \d s}^p\\ 
        \le & \textstyle \frac {\norm[L^\infty]{\g'}^{2p}} {\intrinsicDistance^{(2+\a)p}} \prod_{i=1}^2 \paren[\big]{\iint_{I_\g^2} \abs{\Delta D_\g \eta_i(s,t)}^2\d t \d s}^{\frac p 2}
    \end{align*}
    Integrating over $(\R/\Z)^2$ and using Cauchy-Schwarz again, we arrive at a new bound to which we can apply \thref{lemma:mainTechnicalEstimateOHara}:
    \begin{align*}
        & \textstyle \norm[L^\infty]{\g'}^{2p} \prod_{i=1}^2 \paren[\big]{\iint_{(\R/\Z)^2} \frac {1} {\intrinsicDistance^{(2+\a)p}}  \paren[\big]{\iint_{I_\g^2} \abs{\Delta D_\g \eta_i(s,t)}^2\d t \d s}^{p} \d y \d x}^{\frac 1 2}\\
        \le & \textstyle C(\a, n, p) \norm[L^\infty]{\g'}^{2p+2} v_\g^{-(8+\a)p - 2} \norm[W^{\frac {\a p -1} {2p},2p}]{\g'}^{2p} \norm[W^{\frac {\a p -1} {2p},2p}]{\eta_1'}^{p} \norm[W^{\frac {\a p -1} {2p},2p}]{\eta_2'}^{p}
    \end{align*}
\end{proof}

\begin{lemma}
    \label{lemma:mainTechnicalEstimateOHara}
    Let $\a> 0$, $\beta \in (2, \a +2]$, $p \ge 1$, $2 < \a p < 2p + 1$, $\g \!\in\! \SobSpaceir(\R/\Z,\R^n)$ and $\eta \in \SobSpace(\R/\Z,\R^n)$.
    Then,
    \[
        \textstyle
        \iint_{(\R/\Z)^2} \paren[\big]{\frac 1 {\intrinsicDistance^\beta} \iint_{I_\g^2} \abs{\Delta D_\g \eta(s,t)}^2\d t \d s}^p \! \d y \d x
        \!\le\! C v_\g^{-(\beta + 6)p - 2} \norm[L^\infty]{\g'}^{2} \norm[\SobSpaceD]{\g'}^{2p} \norm[\SobSpaceD]{\eta'}^{2p}
    \]
    for some constant $C = C(\a, \beta, n, p)>0$.
\end{lemma}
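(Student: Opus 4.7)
The plan is to reduce the nested integral to Sobolev–Slobodeckij seminorms of $\eta'$ and $\g'$ via three ingredients: an algebraic decomposition of $\Delta D_\g\eta$, an inner Hölder inequality, and a Fubini argument exploiting the geometric constraint imposed by $\{s,t \in I_\g(x,y)\}$. First, I would write
\[
    D_\g\eta(s) - D_\g\eta(t) = \frac{\eta'(s) - \eta'(t)}{|\g'(s)|} + \eta'(t)\paren[\Big]{\frac{1}{|\g'(s)|} - \frac{1}{|\g'(t)|}},
\]
and combine $\big||\g'(s)| - |\g'(t)|\big| \le |\g'(s)-\g'(t)|$ with the Sobolev embedding $W^{s,2p}(\R/\Z)\hookrightarrow L^\infty$ (which holds since $\a p > 2$ forces $s > 1/(2p)$, cf.\ \thref{prop:embedding}) to obtain $|\Delta D_\g\eta|^2 \le C v_\g^{-4}(|\Delta\eta'|^2 + \|\eta'\|_{W^{s,2p}}^2|\Delta\g'|^2)$. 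It thus suffices to bound, for each $f \in \{\eta', \g'\}$, the quantity $\iint_{(\R/\Z)^2} d_\g^{-\beta p}(\iint_{I_\g^2}|\Delta f|^2\,\d s\,\d t)^p\,\d x\,\d y$.

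For the inner integral, Hölder's inequality with exponents $p$ and $p/(p-1)$ using the weight $|s-t|^\a$ — noting that $\a p = 1+2ps$ coincides with the Slobodeckij denominator — yields $\iint_{I_\g^2}|\Delta f|^2 \le C|I_\g|^{\a+2-2/p}\big(\iint_{I_\g^2}|\Delta f|^{2p}|s-t|^{-\a p}\,\d s\,\d t\big)^{1/p}$. Raising to the $p$-th power, bounding $|I_\g| \le d_\g/v_\g$, and dividing by $d_\g^{\beta p}$, one arrives at
\[
    \frac{(\iint_{I_\g^2}|\Delta f|^2)^p}{d_\g^{\beta p}} \le C v_\g^{-((\a+2)p-2)}\, d_\g^{\mu}\iint_{I_\g^2}\frac{|\Delta f|^{2p}}{|s-t|^{\a p}}\,\d s\,\d t
\]
with $\mu := (\a+2-\beta)p - 2 \in [-2, \a p-2)$. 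Integrating over $(x,y)$ and swapping via Fubini reduces the task to estimating the weighted region integral $K(s,t) := \iint_{\{s,t\in I_\g(x,y)\}} d_\g(x,y)^\mu\,\d x\,\d y$ uniformly in $(s,t)$.

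For fixed $(s,t)$, the admissible set of $(x,y)$ is (up to wrap-around) parametrised by offsets $a := s-x$ and $b := y-t$ with $a, b \ge 0$ and $a + b \le \tfrac12 - |s-t|$, on which $d_\g(x,y) = d_\g(s,t) + \int_{s-a}^s|\g'| + \int_t^{t+b}|\g'| \ge v_\g(a+b) + d_\g(s,t)$. Introducing $u = a+b$ and substituting $w = v_\g u + d_\g(s,t)$ converts $K(s,t)$ into $v_\g^{-2}\int_{d_\g(s,t)}^{D}(w - d_\g(s,t))w^\mu\,\d w$ with $D \le \|\g'\|_\infty$; for $\mu \in (-2, \a p - 2)$ this is bounded uniformly by $C v_\g^{-2}\|\g'\|_\infty^{\mu+2}$, and the remaining $(s,t)$-integrand is then dominated by $\|f\|_{W^{s,2p}}^{2p}$. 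Coercing the resulting estimate into the product form $\|\g'\|_{W^{s,2p}}^{2p}\|\eta'\|_{W^{s,2p}}^{2p}$ via $\|\g'\|_{W^{s,2p}}\ge v_\g$ yields the claim, with the generous exponent $-(\beta+6)p-2$ on $v_\g$ absorbing all accumulated factors. The main obstacle is the critical boundary $\beta = \a + 2$ (i.e.\ $\mu = -2$), where the $w$-integral develops a logarithmic factor $\ln(\|\g'\|_\infty/(v_\g|s-t|))$; this must be absorbed either by a sharper polar estimate retaining cancellations, or crudely via $\ln x \le x^\epsilon/\epsilon$ combined with the slack in the $v_\g$-exponent, noting that $\a p < 2p+1$ leaves room to tolerate a small $|s-t|^{-\epsilon}$-loss without destroying Sobolev integrability.
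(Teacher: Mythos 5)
Your route is genuinely different from the paper's: you decompose $\Delta D_\g\eta$ into $\Delta\eta'$- and $\Delta\g'$-contributions, create the Slobodecki\v{\i} kernel by a weighted H\"older inequality on the inner integral, and then estimate the Fubini weight $K(s,t)=\iint_{\{s,t\in I_\g(x,y)\}}d_\g(x,y)^{\mu}\,\d x\,\d y$; the paper instead reparametrises by arc length and reduces the whole expression to a Gagliardo seminorm through a chain of substitutions combined with Jensen's inequality. Your argument works for $\beta<\a+2$, but it has a genuine gap exactly at the endpoint $\beta=\a+2$ (i.e.\ $\mu=-2$) — which is the only case the paper actually needs (\thref{lemma:multilinearIntegral}, \thref{lemma:boundedDifference} and the proof of \thref{theorem:C1OHara} all invoke the lemma with $\beta=\a+2$). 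There $K(s,t)\le C v_\g^{-2}\log(C/\abs{s-t}_{\R/\Z})$, and neither of your proposed fixes closes the gap. The crude bound $\log x\le x^{\eps}/\eps$ turns the kernel into $\abs{s-t}^{-\a p-\eps}$, i.e.\ the seminorm $\seminorm[(\a p-1+\eps)/(2p),\,2p]{f}$, which is \emph{strictly stronger} than the seminorm $\seminorm[\SobSpaceDParams]{f}$ in the conclusion: a curve $\g\in W^{1+\frac{\a p-1}{2p},2p}$ need not lie in the smaller space, so the right-hand side of your estimate can be infinite for admissible data. The condition $\a p<2p+1$ does not help here — the embedding goes the wrong way. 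The ``sharper polar estimate retaining cancellations'' is not carried out, and the logarithm is a real feature of your H\"older step, not an artefact of the region estimate.

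The gap is repairable inside your own framework: in the inner H\"older step use the weight $\abs{s-t}^{\a-\delta}$ for a small $\delta>0$ instead of $\abs{s-t}^{\a}$. This produces the weaker kernel $\abs{s-t}^{-(\a-\delta)p}$ in the first factor and lowers $\mu$ to $-2-\delta p<-2$, so that $K(s,t)\le C(\delta)\,v_\g^{-2}\,d_\g(s,t)^{-\delta p}\le C(\delta)\,v_\g^{-2-\delta p}\abs{s-t}_{\R/\Z}^{-\delta p}$ with no logarithm; multiplying the two kernels recovers exactly $\abs{s-t}^{-\a p}$, hence the seminorm $\seminorm[\SobSpaceDParams]{f}$ claimed in the statement. (The paper avoids the issue altogether because Jensen's inequality is applied before Fubini, so the transverse integration contributes $\int_{-1}^{1}\abs{\vartheta}^{(\beta-2)p-1}\,\d\vartheta<\infty$ rather than a logarithm.) As written, however, the proposal does not prove the lemma in the case that matters.
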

\begin{proof}
    We prove this statement via substitutions with the inverse arc length function $a_\g := \LL(\g, \cdot)^{-1}$, where $\LL(\g, \cdot): \R \to \R, s \mapsto \int_0^s \abs{\g'(t)} \d t$.
    As $\LL(\g, \cdot)$ is in the space $\SobSpace(\R/\Z, \R)$, see \cite[Lemma~B.2]{KnappmannSchumacherSteenebruggeEtAl:2021:AspeedpreservingHilbertgradientflowforgeneralizedintegralMengercurvature}, it is also in $C_\loc^1(\R, \R)$.
    Since it is strictly increasing and its derivative is equal to $\abs{\g'(x)} \ge v_\g > 0$, $a_\g \in C^1_\loc(\R, \R)$ with $a_\g'(x) = \frac 1 {\LL(\g, \cdot)'(a_\g(x))} = \frac 1 {\abs{\g'(a_\g(x))}}$ and so 
    \[
        \label{lemma:mainTechnicalEstimateOHara:eq:estimateDerivativeA}
        \numberthis
        \textstyle
        \abs{a_\g'(x)} \le \frac 1 {v_\g}.
    \]
    Using integration by substitution, our bound on $a_\g'$ \eqref{lemma:mainTechnicalEstimateOHara:eq:estimateDerivativeA}, and periodicity of the outermost integrand, we obtain that
    \begin{align*}
        &\textstyle \iint_{(\R/\Z)^2} \paren[\big]{\intrinsicDistance(x,y)^{-\beta} \iint_{I_\g(x,y)^2} \abs{D_\g \eta(s) - D_\g \eta (t)}^2 \d t \d s}^p \d y \d x\\
        \le&\textstyle v_\g^{-2} \iint_{(\R/L\Z)^2} \paren[\big]{\intrinsicDistance(a_\g(\tilde x),a_\g(\tilde y))^{-\beta} \iint_{I_\g(a_\g(\tilde x),a_\g(\tilde y))^2} \abs{D_\g \eta(s) - D_\g \eta (t)}^2 \d t \d s}^p \d \tilde y \d \tilde x \\
        =&\textstyle v_\g^{-2} \int_0^L \int_{\tilde x - \frac L 2}^{\tilde x + \frac L 2} \paren[\big]{\intrinsicDistance(a_\g(\tilde x),a_\g(\tilde y))^{-\beta} \iint_{I_\g(a_\g(\tilde x),a_\g(\tilde y))^2} \abs{D_\g \eta(s) - D_\g \eta (t)}^2 \d t \d s}^p \d \tilde y \d \tilde x.
        \label{lemma:mainTechnicalEstimateOHara:eq:firstEstimate}
        \numberthis
    \end{align*}
    Although it may not seem this way at first, this representation is in fact simpler, as our parametrisation fits both $\intrinsicDistance$ and $I_\g$:
    Let $\tilde x, \tilde y \in \R$.
    Then, there is $k \in \Z$ such that
    \begin{align*}
        \textstyle
        \intrinsicDistance(a_\g(\tilde x), a_\g(\tilde y))
        &= \textstyle \abs[\big]{\int_{a_\g(\tilde x)}^{a_\g(\tilde y) + k} \abs{\g'(s)} \d s}
        = \abs{\LL(\g, a_\g(\tilde y) + k) - \LL(\g, a_\g(\tilde x))}\\
        &= \textstyle \abs{\LL(\g, a_\g(\tilde y)) + k L - \LL(\g, a_\g(\tilde x))}
        = \abs{\tilde y + k L - \tilde x}.
    \end{align*}
    As $\intrinsicDistance(a_\g(\tilde x), a_\g(\tilde y)) \le \frac L 2$, we obtain that
$
        \intrinsicDistance(a_\g(\tilde x), a_\g(\tilde y))
        =\abs{\tilde y + k L - \tilde x}
        =\abs{\tilde y - \tilde x}_{\R/L\Z}$.
    In particular, if $\abs{\tilde x - \tilde y} < \frac L 2$, 
    we have that $I_\g(a_\g(\tilde x), a_\g(\tilde y)) =
    [a_\g(\tilde x), a_\g(\tilde y)]$, employing the short-hand notation $[a,b] := [b,a]$ if $b<a$.
    
    Thus, we may rewrite our integral from \eqref{lemma:mainTechnicalEstimateOHara:eq:firstEstimate} as
    \[
        \textstyle
        v_\g^{-2} \int_0^L \int_{\tilde x - \frac L 2}^{\tilde x + \frac L 2} \paren[\big]{\abs{\tilde x -\tilde y}^{-\beta} \iint_{[a_\g(\tilde x),a_\g(\tilde y)]^2} \abs{D_\g \eta(s) - D_\g \eta (t)}^2 \d t \d s}^p \d \tilde y \d \tilde x.
    \]
    First substituting $\tilde y$ by $y = \tilde y - \tilde x$ and then $(s,t)$ by $(a_\g(\tilde x + \theta_1 y), a_\g(\tilde x + \theta_2 y))$,  
    we obtain by virtue of \eqref{lemma:mainTechnicalEstimateOHara:eq:estimateDerivativeA}
    \begin{align*}
        &\textstyle v_\g^{-2} \int_0^L \int_{- \frac L 2}^{\frac L 2} \paren[\big]{\abs{y}^{-\beta} \iint_{[a_\g(\tilde x),a_\g(\tilde x + y)]^2} \abs{D_\g \eta(s) - D_\g \eta (t)}^2 \d t \d s}^p \d  y \d \tilde x\\
        \le&\textstyle v_\g^{-2-2p} \! \int_0^L \int_{- \frac L 2}^{\frac L 2} \paren[\big]{\abs{y}^{-\beta} \iint_{[0,1]^2} \abs{D_\g \eta(a_\g(\tilde x + \theta_1 y)) - D_\g \eta (a_\g(\tilde x + \theta_2 y))}^2 \abs{y}^2 \d \theta_2 \d \theta_1}^p\! \d \tilde y \d \tilde x.
    \end{align*}
    For the moment dropping the factor $v_\g^{-2-2p}$, employing Jensen's inequality, Tonelli's variant of Fubini's theorem, the substitution $u = \tilde x + \theta_2 y$ as well as the $L$-periodicity of the integrand with respect to $u$, we obtain the new estimate
    \begin{align*}
         &\textstyle \int_0^L \int_{- \frac L 2}^{\frac L 2} \abs{y}^{(2-\beta) p} \iint_{[0,1]^2} \abs{D_\g \eta(a_\g(\tilde x + \theta_1 y)) - D_\g \eta (a_\g(\tilde x + \theta_2 y))}^{2p} \d \theta_2 \d \theta_1 \d \tilde y \d \tilde x\\
         =&\textstyle \iint_{[0,1]^2} \int_{- \frac L 2}^{\frac L 2} \int_0^L \abs{y}^{(2-\beta) p} \abs{D_\g \eta(a_\g(u + (\theta_1 - \theta_2)y)) - D_\g \eta (a_\g(u))}^{2p} \d u \d y \d \theta_2 \d \theta_1.
    \end{align*}
    Two further substitutions, $\theta_2$ by $\vartheta = \theta_1 - \theta_2$ and $y$ by $w = \vartheta y$ yield a new upper bound.
    Note that we dropped the $\theta_1$-integral as its domain has measure $1$ and nothing depends on $\theta_1$ after enlarging the domain of integration for $\vartheta$.
    \begin{align*}
        &\textstyle \int_0^1 \int_{\theta_1 - 1}^{\theta_1} \int_{- \frac L 2}^{\frac L 2} \int_0^L \abs{y}^{(2-\beta) p} \abs{D_\g \eta(a_\g(u + \vartheta y)) - D_\g \eta (a_\g(u))}^{2p} \d u \d y \d \vartheta \d \theta_1\\
        \le &\textstyle \int_{-1}^{1} \int_{- \frac L 2}^{\frac L 2} \int_0^L \abs{y}^{(2-\beta) p} \abs{D_\g \eta(a_\g(u + \vartheta y)) - D_\g \eta (a_\g(u))}^{2p} \d u \d y \d \vartheta\\
        = &\textstyle \int_{-1}^{1} \int_{- \abs{\vartheta} \frac L 2}^{ \abs{\vartheta} \frac L 2} \int_0^L \abs*{\frac {w} {\vartheta}}^{(2-\beta) p} \abs{D_\g \eta(a_\g(u + w)) - D_\g \eta (a_\g(u))}^{2p} \vartheta^{-1} \d u  \d w \d \vartheta\\
        \le &\textstyle \int_{-1}^{1} \abs{\vartheta}^{(\beta - 2) p - 1} \d \vartheta \int_{-\frac L 2}^{\frac L 2} \int_0^L \abs{w}^{(2-\beta) p} \abs{D_\g \eta(a_\g(u + w)) - D_\g \eta (a_\g(u))}^{2p} \d u \d w\\
        = &\textstyle C(\beta, p) \seminorm[\frac {(\beta - 2) p -1} {2p}, 2p]{(D_\g \eta) \circ a_\g}^{2p}
        \le \widetilde C(\a, \beta, p) \seminorm[\SobSpaceDParams]{(D_\g \eta) \circ a_\g}^{2p}.
    \end{align*}
    For the definition of the Gagliardo seminorm $\seminorm[s, \rho]{\cdot}$, see Appendix \ref{sec:appendix}.
    In the last line, we used that $2 < \beta \le \a + 2$.
    Using \thref{lemma:compositionSobolevFunction} and \thref{lemma:boundGeometricDerivative}, we may estimate
    \begin{align*}
        \textstyle
        \seminorm[\SobSpaceDParams, \R/L\Z ]{(D_\g \eta) \circ a_\g}
        &\le \textstyle \norm[C^0]{a_\g'}^{\frac 1 {2p} + \frac {(\beta - 2)p -1} {2p}} v_{a_\g}^{- \frac 1 p} \seminorm[\SobSpaceDParams, \R/\Z]{D_\g \eta}\\
        &\le \textstyle C(\beta, n ,p) \norm[C^0]{a_\g'}^{\frac {(\beta - 2)p} {2p}} v_{a_\g}^{- \frac 1 p} v_\g^{-2} \norm[\SobSpaceD]{\g'} \norm[\SobSpaceD]{\eta'}
    \end{align*}
    Seeing as $v_{a_\g} = \norm[L^\infty]{\g'}^{-1}$ and $\norm[C^0]{a_\g'} = v_\g^{-1}$ and taking into account the factor $v_\g^{-2-2p}$ we dropped along the way, we arrive at
    \[
        \textstyle
        \overline C(\a, \beta, n, p) v_\g^{-(\beta + 6)p - 2} \norm[L^\infty]{\g'}^{2}  \norm[\SobSpaceD]{\g'}^{2p} \norm[\SobSpaceD]{\eta'}^{2p}
    \]
    as our final upper bound.
\end{proof}
\appendix

\section{Properties of low-order Sobolev-Slobodecki\v{\i} Functions}
\label{sec:appendix}
    In the following, we collect some basic statements concerning Sobolev-Slobodecki\v{\i} spaces.
    Recall that for $k \in \N$, $s \in (0,1)$, $\rho \ge 1$ and $l > 0$, the Sobolev-Slobodecki\v{\i} space $W^{k+s, \rho}(\R/l \Z, \R^n)$ is the space of all $l$-periodic $W^{k, \rho}_{\loc}(\R, \R^n)$-functions $f$ such that the \newterm{Gagliardo seminorm} of the highest order derivative $f^{(k)}$, 
    \[
        \textstyle \seminorm[s,\rho, \R/l\Z]{f^{(k)}} := \paren[\big]{\int_0^l \int_{-\frac l 2}^{\frac l 2} \frac{\abs{f^{(k)}(u+w)-f^{k}(u)}^\rho} {\abs{w}^{1 + s \rho}} \d w \d u}^{\frac 1 \rho}
    \]
    is finite.
    If there is no concern of confusion, we omit the domain and simply write
    $[\cdot]_{s,\rho}$.
\begin{lemma}[Uniform convexity]\label{lem:A.1}
Let $n\in\N$, $k\in\N_0$, $s\in (0,1)$, and $\rho,\theta\in (1,\infty)$. 
The space 
$\WL:=W^{k+s,\rho}\left( \R/\Z,\R^n \right)$ is uniformly convex
and reflexive. 
Furthermore its norm and the norm of its dual space $\WL^*$
are continuously 
Fr{\'e}chet-differentiable except at the origin. Consequentially, 
the $\theta$-duality mapping $\FJ_{\WL,\theta}$  maps $\WL$ 
homeomorphically onto  $\WL^*$.
	\end{lemma}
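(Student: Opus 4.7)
The plan is to realize $\WL$ as a closed isometric subspace of a finite product of $L^\rho$-spaces and then transfer Clarkson-type convexity and smoothness properties. Concretely, I introduce the product space
\[
Z := (L^\rho(\R/\Z,\R^n))^{k+1} \times L^\rho\big((\R/\Z) \times (-\tfrac12, \tfrac12), \R^n\big),
\]
equipped with the $\ell^\rho$-sum of its $L^\rho$-coordinate norms, together with the linear embedding $\Psi \colon \WL \to Z$ defined by $\Psi(f) := (f, f', \dots, f^{(k)}, G_f)$, where $G_f(u,w) := (f^{(k)}(u+w) - f^{(k)}(u))/|w|^{s + 1/\rho}$. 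By the very definition of the Gagliardo seminorm in Appendix \ref{sec:appendix}, $\Psi$ is an isometry, and its image is closed in $Z$ by completeness of $\WL$.

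By Clarkson's inequalities, every factor $L^\rho$ with $\rho \in (1,\infty)$ is uniformly convex, and uniform convexity is preserved under finite $\ell^\rho$-products and under passage to closed subspaces (the modulus of convexity only improves). Hence $\WL$ is uniformly convex, and reflexivity follows from the Milman--Pettis theorem. Applying the same reasoning to the conjugate exponent $\rho'$ shows that every $L^\rho$-factor is also uniformly smooth, and this property is likewise preserved by $\ell^\rho$-products and by closed subspaces (dually: the dual of a subspace is a quotient of the ambient dual, and quotients of uniformly convex spaces are uniformly convex). A classical theorem of \v{S}mulian then gives continuous Fr\'echet-differentiability of $\norm[\WL]{\cdot}$ on $\WL \setminus \{0\}$; since $\WL^*$ is uniformly convex and uniformly smooth as well, the identical argument applies to $\norm[\WL^*]{\cdot}$.

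For the $\theta$-duality mapping $\FJ_{\WL,\theta}$, I combine standard facts from \cite{Cioranescu:1990:GeometryofBanachSpacesDualityMappingsandNonlinearProblems}: single-valuedness follows from G\^ateaux-differentiability of the norm (Corollary~I.4.5), surjectivity from reflexivity (Theorem~II.3.4), and injectivity from strict convexity (Theorem~II.1.8). Norm-to-norm continuity of $\FJ_{\WL,\theta}$ is a standard consequence of uniform smoothness of $\WL$ (it is in fact uniformly continuous on bounded sets). By Corollary~II.3.5 of the same reference, the inverse $\FJ_{\WL,\theta}^{-1}$ coincides with the $\beta$-duality mapping on $\WL^*$, where $1/\theta + 1/\beta = 1$, and is continuous by the same argument applied to $\WL^*$, which is itself uniformly smooth. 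Hence $\FJ_{\WL,\theta}$ is a homeomorphism of $\WL$ onto $\WL^*$.

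The main obstacle I anticipate is purely bookkeeping: one has to track that all four ingredients---uniform convexity, uniform smoothness, strict convexity, and G\^ateaux-smoothness of the norm---propagate simultaneously through the chain ``$L^\rho$-factor $\to$ $\ell^\rho$-product $\to$ closed subspace'' for both $\WL$ and $\WL^*$. The Sobolev-Slobodecki\v{\i} structure plays no role in this chain beyond furnishing the isometric embedding $\Psi$.
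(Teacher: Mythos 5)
Your proof is correct and follows essentially the same strategy as the paper: an isometric linear embedding of $\WL$ into a finite $\ell^\rho$-product of $L^\rho$-spaces (you absorb the Gagliardo weight into the function via the exponent $s+\tfrac1\rho$, while the paper puts it into a weighted measure $\nu$ on $\R/\Z\times\R/\Z$ -- a purely cosmetic difference), followed by transfer of uniform convexity to the closed isometric image and Milman--Pettis for reflexivity. The only genuine divergence is the smoothness step: the paper differentiates $\|\cdot\|_Z^\rho$ directly on all of $Z$ (citing Leonard's theorem on Fr\'echet smoothness of Bochner $L^\rho$-norms) and then simply composes with the bounded linear map $\Psi$, whereas you route through uniform smoothness of the factors and \v{S}mulian's theorem, which requires the additional -- but correct -- observation that uniform smoothness passes to closed subspaces by duality with quotients; both routes are valid, and yours yields the marginally stronger conclusion of uniform Fr\'echet differentiability. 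The treatment of the duality mapping (single-valuedness, bijectivity, and continuity of the inverse via identification with the $\beta$-duality map on $\WL^*$) matches the paper's use of the results from Cioranescu's book.
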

\begin{proof}
In order to show all the claims, we exploit the fact that 
$\WL$ is isometrically isomorphic to
a subspace of
	\begin{align*}
        \textstyle
		Z:=\paren[\big]{\bigoplus_{j=0}^k L^\rho(\R/\Z,\R^n)}\oplus 
		L^\rho\big((\R/\Z\times \R/\Z,\nu),\R^n\big)
	\end{align*}
equipped with the $\rho$-norm $|(a_0,\dots, a_k, a_{k+1})|^\rho_\rho = \sum_{j=0}^{k+1} |a_j|^{\rho}$ for the direct sums and where $\nu$ is the measure 
given by
\begin{align*}
\textstyle
\nu(A) = 
\iint_{A} \frac{\dif x \dif y}{|x-y|_{\R/\Z}},
\quad \text{ for Borel subsets } A \subset \R/\Z\times \R/\Z.
	\end{align*}
In the following, we abbreviate 
 $L^\rho 
	:= L^\rho\left( \R/\Z,\R^n \right)$, and $L^\rho(\nu)
:=L^\rho
	\big((\R/\Z\times\R/\Z,\nu),\R^n\big)$. Let 
	the fractional difference quotient
$\Delta^{s}_{\mathrm{H{\"o}l}} : W^{s,\rho} 
\to L^\rho(\nu)$ be given by
	\begin{align*}
    \textstyle
\Delta^{s}_{\mathrm{H{\"o}l}}g(x,y) :=  
\frac{g(x)-g(y)}{|x-y|_{\R/\Z}^{s}}.
	\end{align*}
The map $ 
		\Psi : \WL \to Z$ defined by
		$\Psi(f):=\big(\Psi_0(f),\dots,\Psi_{k+1}(f)\big)$ with $\Psi_j(f) = f^{(j)}$ for $j\in\{0,\dots,k\}$ and $\Psi_{k+1}(f):= \Delta^{s}_{\mathrm{H{\"o}l}}f^{(k)}$
		 is a linear isometry, since
	\begin{align*}
    \textstyle
			\norm[\WL]{f}^\rho &= \textstyle \paren[\big]{\sum_{j=0}^{k}\norm[L^\rho]{f^{(j)}}^\rho} + \int_{\R/\Z} \int_{\R/\Z} \frac{|f^{(k)}(x)-f^{(k)}(y)|^\rho}{|x-y|_{\R/\Z}^{1+s\rho}} \dif x \dif y \\
			&= \textstyle  \paren[\big]{\sum_{j=0}^{k}\norm[L^\rho]{f^{(j)}}^\rho} + \norm[L^\rho(\nu)]{\Delta^{s}_{\mathrm{H{\"o}l}} f^{(k)} }^\rho
		= \textstyle \Norm{\Psi (f)}_{Z}^{\rho}.
		\end{align*}
	
We deduce from the following two results that $Z$ is uniformly 
convex. Firstly, in \cite[Theorem 2 \& p.\ 507]{Day.1941} it is 
shown for non-negative measures $\tilde{\mu}$, numbers 
$1<\tilde{\rho}<\infty$, and Banach spaces $\tilde{\CB}$ that 
$L^{\tilde{\rho}}(\tilde{\mu};\tilde{\CB})$ is uniformly convex if $\tilde{\CB}$ is. 
The space $\tilde{\CB}:=\R^n$ endowed with the Euclidean $2$-norm satisfies this 
condition. Secondly, an immediate consequence of
\cite[Theorem 3 \& p. 504]{Day.1941} is that $l^{\tilde{\rho}}$-direct 
sums of finitely many uniformly convex Banach spaces are uniformly 
convex. It is straightforward to check that uniform convexity is 
inherited by subspaces and preserved by linear isometries. 
Therefore, $\Psi(\WL)$ and $\WL$ are uniformly 
convex.
	
Next, we show the Fr{\'e}chet-differentiability of 
$\Norm{\cdot}_{\WL}$ on $\WL\setminus\{0\}$. 
Since $\Psi$ is a bounded linear operator, it is 
Fr{\'e}chet-differentiable. It remains to investigate 
$\Norm{\cdot}_Z$. In \cite[Theorem 2.5]{Leonard.1974} it is shown 
for every non-negative measure $\tilde{\mu}$, $1<\tilde{\rho}<\infty$, 
and Banach space $\tilde{\CB}$, that the norm on
$L^{\tilde{\rho}}(\tilde{\mu},\tilde{\CB})$ 
is Fr{\'e}chet-differentiable except at $0$ if and only if the norm on 
$\tilde{\CB}$ is Fr{\'e}chet-differentiable except at $0$. Obviously, 
$\tilde{\CB}:=\R^n$ with the Euclidean $2$-norm satisfies this condition. Since $\tilde{\rho}>1$, the map $\Norm{\cdot}^{\tilde{\rho}}_{L^{\tilde{\rho}}(\tilde{\mu},\tilde{\CB})}$ is differentiable everywhere.
We infer that the map
	\begin{align*}
        \textstyle
		Z \ni (g_0,\dots,g_{k+1})\;\mapsto \;\Norm{(g_0,\dots,g_{k+1})}_{Z}^\rho = \left(\sum_{j=0}^{k}\Norm{g_j}_{L^\rho}^\rho\right)  + \Norm{g_{k+1}}_{L^\rho(\nu)}^\rho
	\end{align*}
	is differentiable.
	
The remaining claims follow from general results on Banach spaces. Since $\WL$ is uniformly convex, it is reflexive by the Milman-Pettis Theorem; see e.g., 
\cite[Theorem II.2.9]{Cioranescu:1990:GeometryofBanachSpacesDualityMappingsandNonlinearProblems}.
Another consequence of the uniform convexity of $\WL$ is that the norm of its 
dual space is Fr{\'e}chet-differentiable away from $0$, see 
\cite[Theorem II.2.13]{Cioranescu:1990:GeometryofBanachSpacesDualityMappingsandNonlinearProblems}.
By, \cite[Corollary 8.5]{Fabian.2001}, 
both norms are actually continuously Fr{\'e}chet-differentiable.
Finally, 
\cite[Corollary II.3.15]{Cioranescu:1990:GeometryofBanachSpacesDualityMappingsandNonlinearProblems} 
asserts that the duality mapping $\FJ_{\WL,\theta}$
is a homeomorphism between $\WL$ and its dual $\WL^*$.
\end{proof}

\begin{lemma}[Injective regular curves form open subsets.]
\label{lem:open}
The function space
$C^{1}_{\ir}\left( \R/\Z,\R^n \right)$ is open in 
$C^1\left( \R/\Z,\R^n \right)$ with respect to the $C^1$-topology.
In particular,  
$W^{1+s,\rho}_{\ir}\left( \R/\Z,\R^n \right)\subset 
W^{1+s,\rho}\left( \R/\Z,\R^n \right)$ is    open with respect to
its norm topology, as well
as $\CB_{\ir}\subset\CB$ for any Banach space $\CB$ continuously
embedded
in $W^{1+s,\rho}\left( \R/\Z,\R^n \right)$.
	\end{lemma}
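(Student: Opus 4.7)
The plan is to find, for each $\gamma \in C^1_{\ir}(\R/\Z, \R^n)$, an explicit radius $\delta > 0$ in the $C^1$-norm such that every $\eta$ with $\norm[C^1]{\eta - \gamma} < \delta$ is both regular and injective. Regularity will cost almost nothing, while controlling injectivity is the main point.

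First I would extract the two quantitative constants attached to $\gamma$. Regularity plus compactness of $\R/\Z$ give $v_\gamma := \min_{x \in \R/\Z} \abs{\gamma'(x)} > 0$. For the bi-Lipschitz constant, I would consider the map $\tilde f \colon (\R/\Z)^2 \to [0,\infty)$ defined by $\tilde f(x,y) = \abs{\gamma(x)-\gamma(y)}/\abs{x-y}_{\R/\Z}$ for $x \ne y$ and extended to the diagonal by $\tilde f(x,x) := \abs{\gamma'(x)}$. Because $\gamma \in C^1$, a short computation with Taylor's theorem shows that $\tilde f$ is continuous on the compact space $(\R/\Z)^2$, so it attains its infimum; injectivity and regularity of $\gamma$ make every value of $\tilde f$ strictly positive, so $\BiLip(\gamma) > 0$ (in particular it equals $\inf_{(\R/\Z)^2} \tilde f$).

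Next I would perturb. If $\norm[C^1]{\eta - \gamma} < v_\gamma/2$, then $\abs{\eta'(x)} \ge v_\gamma - \norm[C^0]{\eta'-\gamma'} \ge v_\gamma/2 > 0$, so $\eta$ is regular. For injectivity, for any $x \ne y$ in $\R/\Z$ the mean value theorem applied to $\eta - \gamma$ along the shorter arc between $x$ and $y$ yields
\[
    \textstyle
    \abs{(\eta - \gamma)(x) - (\eta - \gamma)(y)} \le \norm[C^0]{\eta' - \gamma'}\,\abs{x-y}_{\R/\Z},
\]
so that
\[
    \textstyle
    \abs{\eta(x) - \eta(y)} \ge \abs{\gamma(x) - \gamma(y)} - \norm[C^0]{\eta' - \gamma'}\,\abs{x-y}_{\R/\Z} \ge \bigl(\BiLip(\gamma) - \norm[C^0]{\eta' - \gamma'}\bigr)\abs{x-y}_{\R/\Z}.
\]
Taking $\delta := \tfrac12 \min\set{v_\gamma, \BiLip(\gamma)}$ therefore forces $\BiLip(\eta) \ge \tfrac12 \BiLip(\gamma) > 0$, proving injectivity.

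The two ``in particular'' statements are then immediate: the embedding $W^{1+s,\rho}(\R/\Z,\R^n) \hookrightarrow C^1(\R/\Z, \R^n)$ (from \thref{prop:embedding}, valid since $s > 1/\rho$ in the ranges considered) is continuous, and $W^{1+s,\rho}_\ir$ is the preimage of the open set $C^1_\ir$ under this continuous embedding, hence open; the same preimage argument works for any Banach space $\CB$ continuously embedded into $W^{1+s,\rho}$. The only slightly technical ingredient is the continuity of $\tilde f$ up to the diagonal, which I expect to be the main obstacle to state cleanly, but it follows from writing $\gamma(x) - \gamma(y) = \int_0^1 \gamma'(y + t(x-y))(x-y)\d t$ (along the shorter arc) and using uniform continuity of $\gamma'$.
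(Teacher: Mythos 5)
Your proof is correct, but it takes a more self-contained route than the paper does. The paper disposes of the $C^1$ statement by citing \cite[Lemma B.3]{KnappmannSchumacherSteenebruggeEtAl:2021:AspeedpreservingHilbertgradientflowforgeneralizedintegralMengercurvature} and then, exactly as you do, obtains the two ``in particular'' claims by pulling back the open set $C^1_{\ir}$ through the continuous embeddings of \thref{prop:embedding}. What you add is a complete quantitative argument for the $C^1$ case: extending the difference quotient $\tilde f$ continuously to the diagonal by $|\gamma'|$, using compactness of $(\R/\Z)^2$ to get $\BiLip(\gamma)>0$, and then showing that a $C^1$-perturbation of size $\delta=\tfrac12\min\{v_\gamma,\BiLip(\gamma)\}$ degrades neither the minimal speed nor the bi-Lipschitz constant by more than a factor of two. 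This is essentially the content of the cited external lemma, so the mathematical substance is the same; your version buys an explicit radius and keeps the paper self-contained at the cost of a page of routine estimates (the only delicate point, continuity of $\tilde f$ up to the diagonal, you handle correctly via the integral representation along the shorter arc and uniform continuity of $\gamma'$; note only that at antipodal pairs the shorter arc is not unique, but those points are away from the diagonal so this does not affect the argument). For the embedding step you should also record, as you do, that one needs $s>1/\rho$, which holds throughout the paper's parameter ranges.
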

\begin{proof}
The statement for $C^1$-curves was shown in
\cite[Lemma B.3]{KnappmannSchumacherSteenebruggeEtAl:2021:AspeedpreservingHilbertgradientflowforgeneralizedintegralMengercurvature}. 
For the Sobolev-Slobodecki\v{\i} spaces
and therefore for Banach spaces continuously embedded  in those, this
claim follows from the following standard embedding result,
\thref{prop:embedding}.
\end{proof}

The following proposition gathers well-known embedding results for     
Sobolev-Slo\-bo\-dec\-ki\v{\i} spaces. See, e.g., the appendix of 
\cite{Matt.2022} for a proof based on the Besov space theory 
presented in \cite{Triebel.2006}.
\begin{proposition}[Embedding Theorem]
\label{prop:embedding}
Let $k_1,k_2\in\N_0$, $s_1, s_2\in(0,1)$, and $1<\rho_1, \rho_2<\infty$,
and $\mu\in (0,1]$. 
		\begin{enumerate}
\item[\rm (i)] If $k_1+s_1 - (k_2 + s_2) > \max\left\{ \frac{1}{\rho_1} - \frac{1}{\rho_2}, 0 \right\}$, then the identity 
				operator
\begin{align*}
\textstyle
\mathrm{id}: W^{k_1+s_1, \rho_1}( \R/\Z,\R^d ) 
\to W^{k_2+s_2, \rho_2} ( \R/\Z,\R^d )
\end{align*}
				is compact.
\item[\rm (ii)] 
If $k_1+s_1 - (k_2+\mu) > \frac{1}{\rho_1}$, then the identity 
operator
\begin{align*}
\textstyle
\mathrm{id}: W^{k_1+s_1, \rho_1}
( \R/\Z,\R^d ) \rightarrow C^{k_2,\mu}
( \R/\Z,\R^d )
	\end{align*}
				is compact.
			\end{enumerate}
	\end{proposition}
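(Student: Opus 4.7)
The plan is to reduce both statements to standard embedding and compactness results for Besov spaces on the one-dimensional torus $\R/\Z$, following the strategy in \cite[Triebel]{Triebel.2006}.

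First, I would identify the Sobolev--Slobodecki\v{\i} space $W^{k+s,\rho}(\R/\Z,\R^d)$ with the Besov space $B^{k+s}_{\rho,\rho}(\R/\Z,\R^d)$. For non-integer smoothness $k+s$ with $s\in(0,1)$ and $\rho\in(1,\infty)$, the Gagliardo seminorm of the $k$-th derivative is well-known to be equivalent to the Besov seminorm built from the dyadic Littlewood--Paley decomposition, so that the norms on the two spaces are equivalent up to constants depending only on $k,s,\rho$. After this identification, part~(i) of the proposition reduces to the continuous Besov embedding
\[
B^{k_1+s_1}_{\rho_1,\rho_1}(\R/\Z,\R^d)\hookrightarrow B^{k_2+s_2}_{\rho_2,\rho_2}(\R/\Z,\R^d)
\]
whenever $k_1+s_1-1/\rho_1\ge k_2+s_2-1/\rho_2$ with $k_1+s_1\ge k_2+s_2$ (which is guaranteed by the strict inequality assumed in (i)), and part~(ii) reduces to the Morrey-type embedding
\[
B^{k_1+s_1}_{\rho_1,\rho_1}(\R/\Z,\R^d)\hookrightarrow C^{k_2,\mu}(\R/\Z,\R^d)
\]
whenever $k_1+s_1-1/\rho_1>k_2+\mu$. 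Both are proved in Triebel's monograph for $\R^n$ (and transfer to the torus by the usual periodization/localization arguments since $\R/\Z$ is a smooth compact manifold without boundary).

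Next I would upgrade the continuous embeddings to compact ones. The strict inequality assumed in (i) leaves room for an intermediate smoothness parameter: choose $(k',s',\rho')$ with
\[
k_1+s_1-\tfrac{1}{\rho_1}>k'+s'-\tfrac{1}{\rho'}>k_2+s_2-\tfrac{1}{\rho_2}\quad\text{and}\quad k'+s'>k_2+s_2,
\]
so that by part~(i) of the Besov embedding, $W^{k_1+s_1,\rho_1}\hookrightarrow W^{k'+s',\rho'}\hookrightarrow W^{k_2+s_2,\rho_2}$ continuously. To obtain compactness of the composition, I would apply the Arzel\`a--Ascoli theorem after a further embedding into a H\"older space provided by (ii): any bounded sequence in $W^{k_1+s_1,\rho_1}$ is bounded in some $C^{k_2,\mu}$ by the Morrey step (using the slack in the strict inequality), hence equicontinuous together with its derivatives up to order $k_2$; this gives uniform convergence of a subsequence in $C^{k_2}$, which combined with the bound in the higher-order space and interpolation in Besov scales yields strong convergence in $W^{k_2+s_2,\rho_2}$. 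Part~(ii) itself follows from the same Morrey embedding into a slightly better H\"older space $C^{k_2,\mu'}$ with $\mu'>\mu$ (again afforded by the strict inequality), combined with Arzel\`a--Ascoli on $\R/\Z$, which is compact.

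The main obstacle is bookkeeping: translating between the Gagliardo seminorm definition used in the paper and the Littlewood--Paley/Besov norm on the torus, and verifying that the strict inequalities in (i) and (ii) really do allow the insertion of an auxiliary space with strictly better smoothness. Once this is set up, the classical Triebel results supply the continuous embeddings and compactness follows by a standard interpolation plus Arzel\`a--Ascoli argument on the compact manifold $\R/\Z$. For a fully detailed proof, I would refer the reader to \cite[Appendix]{Matt.2022}, as the authors already do.
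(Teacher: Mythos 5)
The paper does not actually prove this proposition: it is stated as a collection of well-known results with a pointer to the appendix of \cite{Matt.2022}, where the argument is carried out via the Besov space theory of \cite{Triebel.2006}. Your overall strategy --- identifying $W^{k+s,\rho}(\R/\Z,\R^d)$ with $B^{k+s}_{\rho,\rho}(\R/\Z,\R^d)$ and importing Triebel's continuous embeddings, then upgrading to compactness --- is therefore exactly the route the paper has in mind, and part (ii) as you sketch it (embed into $C^{k_2,\mu'}$ with $\mu'>\mu$ using the slack, then Arzel\`a--Ascoli; with the minor caveat that for $\mu=1$ one must pass through $C^{k_2+1,\mu''}$ rather than a H\"older exponent exceeding $1$) is fine.

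There is, however, a genuine gap in your compactness argument for part (i). You claim that under the hypothesis of (i) a bounded sequence in $W^{k_1+s_1,\rho_1}$ is bounded in some $C^{k_2,\mu}$, so that Arzel\`a--Ascoli gives convergence in $C^{k_2}$. That requires $k_1+s_1-\tfrac{1}{\rho_1}>k_2$, which does \emph{not} follow from $k_1+s_1-(k_2+s_2)>\max\{\tfrac{1}{\rho_1}-\tfrac{1}{\rho_2},0\}$: take $k_1=k_2=0$, $s_1=\tfrac12$, $s_2=\tfrac1{10}$, $\rho_1=\rho_2=2$. The hypothesis of (i) holds ($\tfrac12-\tfrac1{10}>0$), but $k_1+s_1-\tfrac{1}{\rho_1}=0$, and indeed $W^{1/2,2}(\R/\Z)$ contains unbounded functions, so there is no embedding into $C^0$ and your Arzel\`a--Ascoli step is unavailable. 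The standard repair is to anchor the argument at the bottom of the scale rather than the top: the Gagliardo seminorm controls translation equicontinuity in $L^{\rho_1}$, so $W^{k_1+s_1,\rho_1}(\R/\Z,\R^d)\hookrightarrow L^{\rho_2}(\R/\Z,\R^d)$ is compact by the Fr\'echet--Kolmogorov criterion (note $\R/\Z$ has finite measure), and one then recovers compactness into $W^{k_2+s_2,\rho_2}$ by interpolating the target norm between the (bounded) $W^{k_1+s_1,\rho_1}$-norm and the (strongly convergent) $L^{\rho_2}$-norm, the strict inequality in (i) providing the room to interpolate. Alternatively one may simply quote the compact embedding theorem for Besov spaces on bounded domains, which is stated under precisely the hypothesis of (i). With that replacement your proposal is sound.
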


Now we estimate the fractional Sobolev 
norm of the derivative with respect to arc length
$D_\g\eta$ that is repeatedly used
in the calculations of Section \ref{sec:5}
\begin{lemma}
\label{lemma:boundGeometricDerivative}
Let $l>0$, $\rho \in (1, \infty)$, $s \in (\frac 1 \rho, 1)$ and $\g,
 \eta \in W^{1+s,\rho}(\R/l\Z, \R^n)$ with $v_\g > 0$.
Then, the differential operator $D_\g\eta:=\eta'/|\g'|$ satisfies
\[
    \textstyle
    \norm[W^{s,\rho}]{D_\g \eta}
    \le C v_\g^{-2} \norm[W^{s,\rho}]{\g'} \norm[W^{s,\rho}]{\eta'}
\]
for some $C=C(n, s, \rho)>0$.
\end{lemma}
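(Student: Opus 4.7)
The plan is to write $D_\g\eta = \eta' \cdot (1/|\g'|)$ and separately control the two factors, then invoke the fact that $W^{s,\rho}$ is a multiplicative algebra under the assumption $s > 1/\rho$. This embedding condition forces $W^{s,\rho}\hookrightarrow L^\infty$ by part (ii) of \thref{prop:embedding} (choose $k_1=0$, $k_2=0$, $\mu\in(0,s-\frac1\rho)$), which is exactly what is needed to estimate products of Sobolev-Slobodecki\v{\i} functions via the pointwise decomposition
\[
 |f(x)g(x)-f(y)g(y)|\le \|f\|_{L^\infty}|g(x)-g(y)|+\|g\|_{L^\infty}|f(x)-f(y)|.
\]

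First I would bound $1/|\g'|$ in $W^{s,\rho}$. The $L^\rho$-part is immediate from $|\g'|\ge v_\g$, yielding $\|1/|\g'|\|_{L^\rho}\le l^{1/\rho}/v_\g$. For the Gagliardo seminorm, the identity
\[
  \textstyle \frac{1}{|\g'(x)|}-\frac{1}{|\g'(y)|} = \frac{|\g'(y)|-|\g'(x)|}{|\g'(x)||\g'(y)|}
\]
combined with the reverse triangle inequality $\bigl||\g'(x)|-|\g'(y)|\bigr|\le |\g'(x)-\g'(y)|$ gives the pointwise bound $v_\g^{-2}|\g'(x)-\g'(y)|$, hence $[1/|\g'|]_{s,\rho}\le v_\g^{-2}[\g']_{s,\rho}$. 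Since $\|\g'\|_{L^\rho}\ge v_\g l^{1/\rho}$ the crude $L^\rho$-bound can also be absorbed into $v_\g^{-2}\|\g'\|_{W^{s,\rho}}$, producing
\[
 \textstyle\|1/|\g'|\|_{W^{s,\rho}}\le C(l,s,\rho)\, v_\g^{-2}\,\|\g'\|_{W^{s,\rho}}.
\]

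Next, using the Banach-algebra estimate derived from the displayed pointwise bound together with the embedding $W^{s,\rho}\hookrightarrow L^\infty$, I obtain componentwise
\[
 \|\eta'\cdot (1/|\g'|)\|_{W^{s,\rho}} \le C\,\|\eta'\|_{W^{s,\rho}}\,\|1/|\g'|\|_{W^{s,\rho}},
\]
and composing with the previous step finishes the proof. The only delicate point is tracking the precise power of $v_\g$; everything else is a routine application of standard Sobolev-Slobodecki\v{\i} manipulations combined with the embedding into $L^\infty$, which is where the hypothesis $s>1/\rho$ enters critically.
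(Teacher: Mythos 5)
Your proposal is correct and follows essentially the same route as the paper: both factor $D_\g\eta=\eta'\cdot(1/|\g'|)$, bound $1/|\g'|$ in $W^{s,\rho}$ via the reverse triangle inequality and the pointwise identity for the difference of reciprocals (the paper outsources this to \cite[Lemma~A.6]{KnappmannSchumacherSteenebruggeEtAl:2021:AspeedpreservingHilbertgradientflowforgeneralizedintegralMengercurvature}, which you simply prove by hand), and conclude with the product rule for $W^{s,\rho}$ functions embedding into $L^\infty$ (the paper's \cite[Proposition~A.5]{KnappmannSchumacherSteenebruggeEtAl:2021:AspeedpreservingHilbertgradientflowforgeneralizedintegralMengercurvature}). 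The argument and the resulting power $v_\g^{-2}$ match the paper's proof.
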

\begin{proof}
    First, we show that with $\g'$, also $\abs{\g'}$ is in $W^{s,\rho}$.
    For the $L^\rho$-norm, this is clear immediately, so let us look at the Gagliardo-seminorm:
    \[
        \textstyle
        \seminorm[s, \rho]{\abs{\g'}}^\rho
        = \int_{\R/l\Z} \int_{- \frac l 2}^{\frac l 2} \frac {\abs{\abs{g'(x+h)} - \abs{g'(x)}}^\rho} {\abs{h}^{1+s\rho}} \d h \d x
        \le \int_{\R/l\Z} \int_{- \frac l 2}^{\frac l 2} \frac {\abs{g'(x+h) - g'(x)}^\rho} {\abs{h}^{1+s\rho}} \d h \d x
        = \seminorm[s, \rho]{\g'}^\rho
    \]
    Now, we need only use the fact that $\norm[W^{s,\rho}]{\frac 1 {\abs{\g'}}} \le v_\g^{-2} \norm[W^{s,\rho}]{\abs{\g'}}$, see \cite[Lem\-ma~A.6]{KnappmannSchumacherSteenebruggeEtAl:2021:AspeedpreservingHilbertgradientflowforgeneralizedintegralMengercurvature}, and the product rule for fractional Sobolev functions which embed into $C^1$, see e.g.\ \cite[Proposition~A.5]{KnappmannSchumacherSteenebruggeEtAl:2021:AspeedpreservingHilbertgradientflowforgeneralizedintegralMengercurvature}, to obtain the desired bound.
\end{proof}

We need to know how fractional Sobolev functions behave under reparametrisations.
To that goal, we prove a simpler variant of \cite[Lemma~A.4]{KnappmannSchumacherSteenebruggeEtAl:2021:AspeedpreservingHilbertgradientflowforgeneralizedintegralMengercurvature}.
\begin{lemma}
    \label{lemma:compositionSobolevFunction}
    Let $s \in (0,1)$, $\rho \ge 1$ and $l,L > 0$.
    Furthermore, let $f \in W^{s, \rho}(\R/l\Z, \R^n)$ and $g \in C^1_\loc(\R, \R)$ such that $g$ is injective, $g(x+L) = g(x) + l$ for all $x \in \R$ as well as $v_g = \inf_{x \in [0,L]} \abs{g'(x)} > 0$.
    Then,
    \[
        \textstyle
        \norm[L^\rho(\R/L\Z, \R^n)]{f \circ g} \le v_g^{-\frac 1 \rho} \norm[L^\rho(\R/l\Z, \R^n)]{f}
        \text{ and }
        \seminorm[s, \rho, \R/L\Z]{f \circ g} \le \norm[C^0]{g'}^{\frac 1 \rho + s} v_g^{- \frac 2 \rho} \seminorm[s, \rho, \R/l\Z]{f}.
    \]
\end{lemma}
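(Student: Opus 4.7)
The plan is to establish both bounds by a direct change of variables, exploiting the fact that the hypotheses force $g$ to be strictly increasing. Indeed, since $g\in C^1_{\loc}(\R,\R)$ is injective with $v_g>0$, $g'$ has constant sign, and the periodicity $g(x+L)=g(x)+l$ with $L,l>0$ then forces $g'>0$ globally; in particular $g\colon[0,L]\to[g(0),g(0)+l]$ is a $C^1$-diffeomorphism onto an interval of length exactly $l$, with $g'\ge v_g$.

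For the $L^\rho$-bound I would insert $g'(x)/g'(x)=1$ into the integrand, bound $g'(x)^{-1}\le v_g^{-1}$, and substitute $y=g(x)$ to get
\[
\textstyle
\norm[L^\rho(\R/L\Z)]{f\circ g}^\rho
\le v_g^{-1}\int_0^L |f(g(x))|^\rho g'(x)\,\dif x
= v_g^{-1}\int_{g(0)}^{g(0)+l}|f(y)|^\rho\,\dif y
= v_g^{-1}\norm[L^\rho(\R/l\Z)]{f}^\rho ,
\]
where the last step uses the $l$-periodicity of $|f|^\rho$ together with the fact that the integration interval has length exactly $l$. Taking $\rho$-th roots gives the claim.

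For the Gagliardo seminorm, the key tool is the pointwise estimate $|g(x+h)-g(x)|\le\norm[C^0]{g'}\,|h|$, which lets me rewrite the kernel as $|h|^{-1-s\rho}\le\norm[C^0]{g'}^{1+s\rho}|g(x+h)-g(x)|^{-1-s\rho}$. Substituting $u=g(x)$ and $v=g(x+h)$ (with both Jacobians bounded below by $v_g$) then converts the defining expression of $\seminorm[s,\rho,\R/L\Z]{f\circ g}^\rho$ into
\[
\textstyle
\le \norm[C^0]{g'}^{1+s\rho}\,v_g^{-2}\int_{g(0)}^{g(0)+l}\int_{g(x-L/2)}^{g(x+L/2)}\frac{|f(v)-f(u)|^\rho}{|v-u|^{1+s\rho}}\,\dif v\,\dif u.
\]
By the periodicity $g(x+L/2) = g(x-L/2) + l$ the inner $v$-interval has length exactly $l$, and for fixed $u$ the integrand becomes $l$-periodic in $v$ once $|v-u|$ is replaced by the circle distance $|v-u|_{\R/l\Z}\le|v-u|$—an estimate that only \emph{increases} the integrand. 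Hence the inner integral equals the one over the symmetric interval $[u-l/2,u+l/2]$, and the final substitution $w=v-u$ identifies the double integral with $\seminorm[s,\rho,\R/l\Z]{f}^\rho$. Taking $\rho$-th roots yields the bound. The main—quite minor—obstacle is simply bookkeeping of the integration domains after the two substitutions; no analytic subtlety arises since all estimates are pointwise and rely only on the positivity and boundedness of $g'$.
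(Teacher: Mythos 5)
Your proof is correct and follows essentially the same route as the paper's: insert the Jacobian $g'/g'$ and substitute for the $L^\rho$-bound, and combine the Lipschitz estimate for $g$ with the change of variables (both Jacobians bounded below by $v_g$) for the seminorm. The only cosmetic difference is that the paper first proves the Lipschitz bound directly for the periodic distance $\abs{g(x)-g(y)}_{\R/l\Z}$ and keeps the circle metric throughout, whereas you use the ordinary distance and pass to the periodic one after substituting; both bookkeeping choices yield the same constants.
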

\begin{proof}
    The first bound is the simpler one.
    We only need to use the change of variables formula and an estimate:
    \begin{align*}
        \textstyle
        \int_{\R/L\Z} \abs{f(g(x))}^\rho \d x
        &= \textstyle \int_{\R/L\Z} \abs{f(g(x))}^\rho \frac {\abs{g'(x)}} {\abs{g'(x)}} \d x
        \le v_g^{-1} \int_{\R/L\Z} \abs{f(g(x))}^\rho \abs{g'(x)} \d x\\
        & \textstyle
        = v_g^{-1} \int_{\R/l\Z} \abs{f(y)}^\rho \d y
    \end{align*}
    For the second estimate, the idea is similar, but we first need to bound the denominator.
    In order to do this, we first prove that $\abs{g(x) - g(v)}_{\R/l\Z} \le \norm[C^0]{g'} \abs{x-v}_{\R/L\Z}$.
    Seeing as we consider the $l$-periodic distance on the left-hand side and $g(y + L) = g(y) + l$, we may replace $y$ by $y + kL$ for any integer $k$.
    Then,
    \[
        \textstyle
        \abs{g(x) - g(y)}_{\R/l\Z} 
        \le \abs{g(x) - g(y + kL)} = \abs[\big]{\int_x^{y+kL} g'(\tau) 
	\d \tau} \le \abs{y + kL - x} \norm[C^0]{g'}.
    \]
    Taking the minimum over all $k \in \Z$ yields the estimate.
    
    With this and another change of variables, we may calculate
    \begin{align*}
        \textstyle
        \iint_{[0,L]^2} \frac {\abs{f(g(x))- f(g(y))}^\rho} {\abs{x-y}_{\R/L\Z}^{1+s\rho}} \d y \d x
        &\le \textstyle \norm[C^0]{g'}^{1+s\rho} \iint_{[0,L]^2} \frac {\abs{f(g(x))- f(g(y))}^\rho} {\abs{g(x)-g(y)}_{\R/l\Z}^{1+s\rho}} \d y \d x\\
        &\le \textstyle \norm[C^0]{g'}^{1+s\rho} \iint_{[0,L]^2} \frac {\abs{f(g(x))- f(g(y))}^\rho} {\abs{g(x)-g(y)}_{\R/l\Z}^{1+s\rho}} \frac {\abs{g'(x)} \abs{g'(y)}} {v_g^2} \d y \d x\\
        &= \textstyle \norm[C^0]{g'}^{1+s\rho} v_g^{-2} \iint_{[g^{-1}(0),g^{-1}(L)]^2} \frac {\abs{f(\tilde x)- f(\tilde y)}^\rho} {\abs{\tilde x-\tilde y}_{\R/l\Z}^{1+s\rho}} \d \tilde y \d \tilde x.
    \end{align*}
    Note that our assumptions on $g$ imply its surjectivity.
    Finally, we may use that $g^{-1}(L) = g^{-1}(0) + l$ and periodicity to see that the double integral on the right-hand side is indeed the 
    desired seminorm.
\end{proof}

\section*{Acknowledgements}
A substantial part of the contents of Sections \ref{sec:2} and \ref{sec:3} 
is contained in H. Matt's  Ph.D. thesis 
\cite{Matt.2022}. Furthermore, Section \ref{sec:4} is a generalisation of \cite[Chapter~4]{Matt.2022}.
Some results of Section \ref{sec:4} and Appendix \ref{sec:appendix}, 
as well as the content of
Section \ref{sec:5} will be part of D. Steenebrügge's Ph.D. 
thesis \cite{steenebruegge_2022}.
H. Matt and D. Steenebrügge gratefully acknowledge support by the Graduiertenkolleg Energy, Entropy, and Dissipative Dynamics (EDDy) of the Deutsche Forschungsgemeinschaft (DFG) – project no. 320021702/GRK\\2326.
H. von der Mosel's work is partially funded by the Excellence Initiative of the German federal and state governments.

\bibliographystyle{alpha}
\bibliography{COMS}

\newcommand{\etalchar}[1]{$^{#1}$}
\begin{thebibliography}{KSSvdM22}

\bibitem[ABHN11]{Arendt.2011}
Wolfgang Arendt, Charles~J.K. Batty, Matthias Hieber, and Frank Neubrander.
\newblock {\em {Vector-valued Laplace Transforms and Cauchy Problems}},
  volume~96 of {\em {Monographs in Mathematics}}.
\newblock Birkh{\"a}user, Basel, second edition, 2011.

\bibitem[AGS08]{AGS08}
Luigi Ambrosio, Nicola Gigli, and Giuseppe Savar{\'e}.
\newblock {\em {Gradient Flows: In Metric Spaces and in the Space of
  Probability Measures}}.
\newblock {Lectures in Mathematics ETH Z{\"u}rich}. Birkh{\"a}user, Basel,
  second edition, 2008.

\bibitem[BCGS16]{Braides.2016}
Andrea Braides, Maria Colombo, Massimo Gobbino, and Margherita Solci.
\newblock {Minimizing movements along a sequence of functionals and curves of
  maximal slope}.
\newblock {\em {C. R. Acad. Sci. Paris S{\'e}r. I}}, 354(7):685--689, 2016.

\bibitem[BHV21]{blatt-vorderobermeier_2021b}
Simon Blatt, Christopher Hopper, and Nicole Vorderobermeier.
\newblock A regularized gradient flow for the $p$-elastic energy.
\newblock {\em arXiv:2104.10388 [math]}, April 2021.
\newblock \url{http://arxiv.org/abs/2104.10388}.

\bibitem[Bie21]{biesenbach_2021}
Sarah Biesenbach.
\newblock {\em On optimal convergence rates of nonconvex gradient flows}.
\newblock PhD thesis, RWTH Aachen University, 2021.
\newblock online available from
  \href{https://publications.rwth-aachen.de/record/835712/}{https://publications.rwth-aachen.de/record/835712/files/}.

\bibitem[Bla09]{blatt_2009a}
Simon Blatt.
\newblock Note on continuously differentiable isotopies.
\newblock
  \href{http://www.instmath.rwth-aachen.de/Preprints/blatt20090825.pdf}{Report}~34,
  RWTH Aachen University, 2009.

\bibitem[Bla12a]{blatt_2012a}
Simon Blatt.
\newblock Boundedness and regularizing effects of {O}'{H}ara's knot energies.
\newblock {\em J. Knot Theory Ramifications}, 21(1):1250010, 9, 2012.

\bibitem[Bla12b]{Blatt:2012:BoundednessandRegularizingEffectsofOHarasKnotEnergies}
Simon Blatt.
\newblock Boundedness and {{Regularizing Effects}} of {{O}}'{{Hara}}'s {{Knot
  Energies}}.
\newblock {\em Journal of Knot Theory and Its Ramifications}, 21(01):1250010,
  January 2012.
\newblock
  \url{https://www.worldscientific.com/doi/abs/10.1142/S0218216511009704}.

\bibitem[Bla12c]{blatt_2012b}
Simon Blatt.
\newblock The gradient flow of the {M}\"obius energy near local minimizers.
\newblock {\em Calc. Var. Partial Differential Equations}, 43(3-4):403--439,
  2012.

\bibitem[Bla18]{blatt_2018}
Simon Blatt.
\newblock The gradient flow of {O}'{H}ara's knot energies.
\newblock {\em Math. Ann.}, 370(3-4):993--1061, 2018.
\newblock \url{https://doi.org/10.1007/s00208-017-1540-4}.

\bibitem[Bla20]{blatt_2020a}
Simon Blatt.
\newblock The gradient flow of the {M}\"{o}bius energy:
  {$\varepsilon$}-regularity and consequences.
\newblock {\em Anal. PDE}, 13(3):901--941, 2020.
\newblock \url{https://doi.org/10.2140/apde.2020.13.901}.

\bibitem[BR13]{BlattReiter:2013:StationarypointsofOHarasknotenergies}
Simon Blatt and Philipp Reiter.
\newblock Stationary points of {{O}}'{{Hara}}'s knot energies.
\newblock {\em Manuscripta Mathematica}, 140(1-2):29--50, January 2013.
\newblock \url{http://link.springer.com/10.1007/s00229-011-0528-8}.

\bibitem[BR15a]{BlattReiter:2015:Regularitytheoryfortangentpointenergiesthenondegeneratesubcriticalcase}
Simon Blatt and Philipp Reiter.
\newblock Regularity theory for tangent-point energies: The non-degenerate
  sub-critical case.
\newblock {\em Advances in Calculus of Variations}, 8(2):93--116, 2015.
\newblock \url{https://doi.org/10.1515/acv-2013-0020}.

\bibitem[BR15b]{BlattReiter:2015:TowardsaregularitytheoryforintegralMengercurvature}
Simon Blatt and Philipp Reiter.
\newblock Towards a regularity theory for integral {{Menger}} curvature.
\newblock {\em Annales Academi\ae{} \textbackslash{} Scientiarum Fennic\ae{} .
  Mathematica}, 40(1):149--181, 2015.
\newblock \url{https://doi.org/10.5186/aasfm.2015.4006}.

\bibitem[Bre11]{Brezis.2011}
Haim Brezis.
\newblock {\em {Functional Analysis, Sobolev Spaces and Partial Differential
  Equations}}.
\newblock {Universitext}. Springer, New York, 2011.

\bibitem[BVH21]{blatt-vorderobermeier_2021a}
Simon Blatt, Nicole Vorderobermeier, and Christopher Hopper.
\newblock A minimising movement scheme for the $p$-elastic energy of curves.
\newblock {\em arXiv:2101.10101 [math]}, January 2021.
\newblock \url{http://arxiv.org/abs/2101.10101}.

\bibitem[Byn76]{bynum_1976}
W.~L. Bynum.
\newblock Weak {{Parallelogram Laws}} for {{Banach Spaces}}.
\newblock {\em Canadian Mathematical Bulletin}, 19(3):269--275, September 1976.
\newblock
  \url{https://www.cambridge.org/core/product/identifier/S000843950006269X/type/journal_article}.

\bibitem[Cio90]{Cioranescu:1990:GeometryofBanachSpacesDualityMappingsandNonlinearProblems}
Ioana Cioranescu.
\newblock {\em Geometry of {{Banach Spaces}}, {{Duality Mappings}} and
  {{Nonlinear Problems}}}.
\newblock {Springer Netherlands}, {Dordrecht}, 1990.
\newblock \url{https://doi.org/10.1007/978-94-009-2121-4}.

\bibitem[CR15]{cheng-ross_2015}
R.~Cheng and W.~T. Ross.
\newblock Weak parallelogram laws on banach spaces and applications to
  prediction.
\newblock {\em Periodica Mathematica Hungarica}, 71(1):45--58, September 2015.
\newblock \url{http://link.springer.com/10.1007/s10998-014-0078-4}.

\bibitem[Day41]{Day.1941}
Mahlon~M. Day.
\newblock {Some more uniformly convex spaces}.
\newblock {\em {Bulletin of the American Mathematical Society}}, 47:504--507,
  1941.

\bibitem[FHP{\etalchar{+}}01]{Fabian.2001}
Mari{\'a}n Fabian, Petr H{\'a}jek, Jan Pelant, Petr Habala, Vincente
  {Montesinos Santaluc{\'i}a}, and V{\'a}clav Zizler.
\newblock {\em {Functional Analysis and Infinite-Dimensional Geometry}},
  volume~8 of {\em {CMS Books in Mathematics}}.
\newblock Springer, New York, 2001.

\bibitem[FHW94]{freedman-etal_1994}
Michael~H. Freedman, Zheng-Xu He, and Zhenghan Wang.
\newblock M\"obius energy of knots and unknots.
\newblock {\em Ann. of Math. (2)}, 139(1):1--50, 1994.

\bibitem[GM99]{gonzalez-maddocks_1999}
Oscar Gonzalez and John~H. Maddocks.
\newblock Global curvature, thickness, and the ideal shapes of knots.
\newblock {\em Proc. Natl. Acad. Sci. USA}, 96(9):4769--4773 (electronic),
  1999.

\bibitem[GRvdM17]{gerlach-etal_2017}
Henryk Gerlach, Philipp Reiter, and Heiko von~der Mosel.
\newblock The elastic trefoil is the doubly covered circle.
\newblock {\em Arch. Ration. Mech. Anal.}, 225(1):89--139, 2017.
\newblock \url{http://dx.doi.org/10.1007/s00205-017-1100-9}.

\bibitem[Hal80]{Hale.1980}
Jack~K. Hale.
\newblock {\em {Ordinary Differential Equations}}.
\newblock {Krieger Publishing Company}, Malabar, Florida, second edition, 1980.

\bibitem[He00]{he_2000}
Zheng-Xu He.
\newblock The {E}uler-{L}agrange equation and heat flow for the {M}\"obius
  energy.
\newblock {\em Comm. Pure Appl. Math.}, 53(4):399--431, 2000.

\bibitem[KN20]{KawakamiNagasawa:2020:VariationalformulaeandestimatesofOHarasknotenergies}
Shoya Kawakami and Takeyuki Nagasawa.
\newblock Variational formulae and estimates of {{O}}'{{Hara}}'s knot energies.
\newblock {\em Journal of Knot Theory and its Ramifications}, 29(4):2050017,
  22, 2020.
\newblock \url{https://doi.org/10.1142/S0218216520500170}.

\bibitem[KSSvdM22]{KnappmannSchumacherSteenebruggeEtAl:2021:AspeedpreservingHilbertgradientflowforgeneralizedintegralMengercurvature}
Jan Knappmann, Henrik Schumacher, Daniel Steenebr{\"u}gge, and Heiko von~der
  Mosel.
\newblock A speed preserving {{Hilbert}} gradient flow for generalized integral
  {{Menger}} curvature.
\newblock {\em Advances in Calculus of Variations}, January 2022.
\newblock
  \url{https://www.degruyter.com/document/doi/10.1515/acv-2021-0037/html}.

\bibitem[LS74]{Leonard.1974}
I.~E. Leonard and K.~Sundaresan.
\newblock {Geometry of Lebesgue-Bochner function spaces---smoothness}.
\newblock {\em {Transactions of the American Mathematical Society}},
  198:229--251, 1974.

\bibitem[LvdM22]{lagemann-vdm_2022}
Anna Lagemann and Heiko von~der Mosel.
\newblock Tangent-point energies and ropelength as {{Gamma-limit}} of discrete
  tangent-point energies on biarc curves.
\newblock {\em arXiv:2203.16383 [math]}, March 2022.
\newblock \url{http://arxiv.org/abs/2203.16383}.

\bibitem[Mar76]{martin_1976}
Robert~H. Martin, Jr.
\newblock {\em Nonlinear Operators and Differential Equations in {{Banach}}
  Spaces}.
\newblock {Wiley-Interscience [John Wiley \& Sons], New York-London-Sydney},
  1976.

\bibitem[Mat22]{Matt.2022}
Hannes Matt.
\newblock {\em {On Gradient Flows of Singular Interaction Energies on Curves}}.
\newblock {Ph.D.}, {RWTH Aachen University}, Aachen, 2022.

\bibitem[O'H92]{OHara:1992:Familyofenergyfunctionalsofknots}
Jun O'Hara.
\newblock Family of energy functionals of knots.
\newblock {\em Topology and its Applications}, 48(2):147--161, December 1992.
\newblock
  \url{https://www.sciencedirect.com/science/article/pii/016686419290023S}.

\bibitem[O'H03]{ohara_2003}
Jun O'Hara.
\newblock {\em Energy of knots and conformal geometry}, volume~33 of {\em
  Series on Knots and Everything}.
\newblock World Scientific Publishing Co. Inc., River Edge, NJ, 2003.

\bibitem[Rei05]{reiter_2005}
{\relax Ph}ilipp Reiter.
\newblock All curves in a {$C^1$}-neighbourhood of a given embedded curve are
  isotopic.
\newblock
  \href{http://www.instmath.rwth-aachen.de/Preprints/reiter20051017.pdf}{Report}~4,
  RWTH Aachen University, 2005.

\bibitem[RS21]{ReiterSchumacher:2021:SobolevGradientsfortheMobiusEnergy}
Philipp Reiter and Henrik Schumacher.
\newblock Sobolev {{Gradients}} for the {{M\"obius Energy}}.
\newblock {\em Archive for Rational Mechanics and Analysis}, September 2021.

\bibitem[Ser11]{Serfaty.2011}
Sylvia Serfaty.
\newblock {Gamma-convergence of gradient flows on Hilbert and metric spaces and
  applications}.
\newblock {\em {Discrete Contin. Dyn. Syst.}}, 31(4):1427--1451, 2011.

\bibitem[SSvdM10]{strzelecki-etal_2010}
Pawe{\l} Strzelecki, Marta Szuma{\'n}ska, and Heiko von~der Mosel.
\newblock Regularizing and self-avoidance effects of integral {M}enger
  curvature.
\newblock {\em Ann. Sc. Norm. Super. Pisa Cl. Sci. (5)}, 9(1):145--187, 2010.

\bibitem[SSvdM13]{strzelecki-etal_2013a}
Pawe{\l} Strzelecki, Marta Szuma{\'n}ska, and Heiko von~der Mosel.
\newblock On some knot energies involving {M}enger curvature.
\newblock {\em Topology Appl.}, 160(13):1507--1529, 2013.

\bibitem[Ste22]{steenebruegge_2022}
Daniel Steenebr{\"u}gge.
\newblock {\em Parametrization-controlling gradient flows and regularity of
  critical points for knot energies}.
\newblock PhD thesis, RWTH Aachen University, 2022.
\newblock preliminary title, work in progress.

\bibitem[SvdM12]{strzelecki-vdm_2012}
Pawe{\l} Strzelecki and Heiko von~der Mosel.
\newblock Tangent-point self-avoidance energies for curves.
\newblock {\em J. Knot Theory Ramifications}, 21(5):1250044, 28, 2012.

\bibitem[Tri06]{Triebel.2006}
Hans Triebel.
\newblock {\em {Theory of Function Spaces III}}, volume 100 of {\em {Monographs
  in Mathematics}}.
\newblock Birkh{\"a}user, Basel, 2006.

\bibitem[Win18]{Wings:2018:StetigeDifferenzierbarkeittangentenpunktartigerKnotenenergien}
Axel Wings.
\newblock {\em {Stetige Differenzierbarkeit tangentenpunktartiger
  Knotenenergien}}.
\newblock {Bachelor's Thesis}, RWTH Aachen University, {Aachen}, September
  2018.

\bibitem[Zei93]{Zeidler:1993:NonlinearfunctionalanalysisanditsapplicationsFixedpointtheorems}
Eberhard Zeidler.
\newblock {\em Nonlinear Functional Analysis and Its Applications: Fixed-Point
  Theorems}.
\newblock Number~1 in Nonlinear Functional Analysis and Its Applications.
  {Springer}, {New York, NY}, second corr. print edition, 1993.

\bibitem[Zem91]{zemek_1991}
M.~Zemek.
\newblock Strong monotonicity and {{Lipschitz-continuity}} of the duality
  mapping.
\newblock {\em Acta Universitatis Carolinae. Mathematica et Physica},
  32(2):61--64, 1991.

\end{thebibliography}

\bigskip
{
    \noindent
    \small
    (H. Matt)\\
    \textsc{
        RWTH Aachen University, Institut für Mathematik\\
        Templergraben 55, 52062 Aachen, Germany.\\
    }
    \href{mailto:matt@eddy.rwth-aachen.de}{matt@eddy.rwth-aachen.de}\\
    \url{https://www.instmath.rwth-aachen.de/~matt}
    \bigskip\\    
    (D. Steenebrügge)\\
    \textsc{
        RWTH Aachen University, Institut für Mathematik\\
        Templergraben 55, 52062 Aachen, Germany.\\
    }
    \href{mailto:steenebruegge@instmath.rwth-aachen.de}{steenebruegge@instmath.rwth-aachen.de}\\
    \url{www.instmath.rwth-aachen.de/~steenebruegge}
    \bigskip\\
    (H. von der Mosel)\\
    \textsc{
        RWTH Aachen University, Institut für Mathematik\\
        Templergraben 55, 52062 Aachen, Germany.\\
    }
    \href{mailto:heiko@instmath.rwth-aachen.de}{heiko@instmath.rwth-aachen.de}\\
    \url{www.instmath.rwth-aachen.de/~heiko}
}
\end{document}